\numberwithin{equation}{section}
\numberwithin{figure}{section}
\DeclareFontFamily{OT1}{pzc}{}
\DeclareFontShape{OT1}{pzc}{m}{it}{<-> s * [1.10] pzcmi7t}{}
\DeclareMathAlphabet{\mathpzc}{OT1}{pzc}{m}{it}
\title[Spectral sequences and link homology]{The Ozsv\'{a}th-Szab\'{o} spectral sequence and combinatorial link homology}
\author{Adam Saltz}
\email{adam.saltz@uga.edu}
\newcommand{\stack}[2]{\vtop{\hbox{\strut{#1}} \hbox{\strut{#2}}}}
\newcommand{\bF}{\mathbb{F}}
\newcommand{\bz}{\mathbb{Z}}
\newcommand{\bq}{\mathbb{Q}}
\newcommand{\br}{\mathbb{R}} 
\newcommand{\al}{\alpha}
\newcommand{\be}{\beta}
\newcommand{\ga}{\gamma}
\newcommand{\bal}{\bm{\al}}
\newcommand{\bbe}{\bm{\be}}
\newcommand{\bet}{\bm{\eta}}
\newcommand{\boldA}{\mathbf{A}}
\newcommand{\boldB}{\mathbf{B}}
\newcommand{\diagram}{\mathpzc{D}}
\newcommand{\bouquet}{\mathpzc{Bo}}
\newcommand{\branched}{\mathpzc{Br}}
\newcommand{\fraks}{\mathfrak{s}}
\newcommand{\CF}{\widehat{CF}}
\newcommand{\HF}{\widehat{HF}}
\newcommand{\HFK}{\widehat{HFK}}
\DeclareMathOperator{\liftA}{\tilde{A}}
\DeclareMathOperator{\alex}{\mathbf{A}}
\DeclareMathOperator{\rank}{rank}
\DeclareMathOperator{\Sym}{Sym}
\DeclareMathOperator{\Spin}{Spin}
\newcommand{\Spinc}{\Spin^c}
\DeclareMathOperator{\gr}{\widetilde{gr}}
\newcommand{\Ozsvath}{Ozsv{\'a}th\xspace}
\newcommand{\Szabo}{Szab{\'o}\xspace}
\DeclareMathOperator{\Span}{span}
\DeclareMathOperator{\OS}{OS}
\DeclareMathOperator{\BR}{BR}
\DeclareMathOperator{\Sz}{Sz}
\DeclareMathOperator{\CSz}{CSz}
\DeclareMathOperator{\ESz}{ESz}
\DeclareMathOperator{\basez}{\mathbf{z}}
\DeclareMathOperator{\basew}{\mathbf{w}}
\newcommand{\cmap}{\mathcal{F}}
\newcommand{\config}{\mathcal{C}}
\newcommand{\decos}{\mathbf{t}}
\DeclareMathOperator{\im}{Im}
\DeclareMathOperator{\Id}{Id}
\DeclareMathOperator{\CKh}{CKh}
\DeclareMathOperator{\Kh}{Kh}
\DeclareMathOperator{\AKh}{AKh}
\def\co{\colon\thinspace}
\newtheorem{Thm}{Theorem}
\newtheorem*{Thm*}{Theorem}
\newtheorem{Prop}[Thm]{Proposition}
\newtheorem{Lemma}[Thm]{Lemma}
\newtheorem{Cor}[Thm]{Corollary}
\newtheorem{Conj}{Conjecture}
\theoremstyle{definition}
\newtheorem{Def}[Thm]{Definition}
\theoremstyle{remark}
\newtheorem{Rem}{Remark}
\begin{document}
\bibliographystyle{plain}
\begin{abstract} 
The Khovanov homology of a link in $S^3$ and the Heegaard Floer homology of its branched double cover are related through a spectral sequence constructed by \Ozsvath and \Szabo.  This spectral sequence has topological applications but is difficult to compute.  We build an isomorphic spectral sequence whose underlying filtered complex is as simple as possible: it has the same rank as the Khovanov chain group.  We show that this spectral sequence is not isomorphic to \Szabo's combinatorial spectral sequence, which Seed and \Szabo conjectured to be equivalent to \Ozsvath-\Szabo's.  The discrepancy leads us to define a variation of \Szabo's theory for links embedded in a thickened annulus.  We conclude with a refinement of Seed and \Szabo's conjecture for the new theory.
\end{abstract}
\maketitle

\section{Introduction}

Khovanov homology and Heegaard Floer homology have each been used to prove and reprove remarkable theorems in low-dimensional topology.  Among the most amazing of these theorems is the relationship between the two invariants.

\begin{Thm*}[\Ozsvath and \Szabo, \cite{Ozsvath2005b}]
  Let $L \subset S^3$ be a link with mirror $m(L)$.  Let $S^3(L)$ be the double cover of $S^3$ branched along $L$.  There is a spectral sequence from the Khovanov homology $\Kh(m(L))$ to the Heegaard Floer homology $\HF(S^3(L)) \otimes \HF(S^1 \times S^2)$.
\end{Thm*}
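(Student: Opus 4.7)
The plan is to construct a filtered chain complex whose associated graded is identified with the Khovanov chain complex of $m(L)$ and whose total homology computes $\HF(S^3(L)) \otimes \HF(S^1 \times S^2)$; the spectral sequence of this filtration is then the desired object. First I would fix a planar projection of $L$ with $n$ crossings and form the cube of resolutions $\{L_v\}_{v \in \{0,1\}^n}$. At each vertex $v$ the diagram $L_v$ is an unlink with $k_v$ components, so its branched double cover is $\#^{k_v - 1}(S^1 \times S^2)$. The group $\HF$ of this manifold has rank $2^{k_v}$, which, after an overall shift, matches the rank of the Khovanov chain group associated to the resolution $v$. This rank match is what makes identification of the $E_2$ page with $\Kh(m(L))$ plausible.

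Next I would build the edge maps in the cube. An edge corresponds to changing a $0$-resolution to a $1$-resolution at a single crossing, which on the branched double cover is realized as $\pm 1$ Dehn surgery along a framed knot associated to that crossing. The Heegaard Floer surgery exact triangle then supplies a canonical chain map between the vertex groups. After choosing generators carefully via a basepointed multi-pointed Heegaard diagram adapted to the bridge presentation of $L$, one shows that on homology these edge maps coincide with the Khovanov merge and split maps $m$ and $\Delta$, possibly up to a sign convention coming from the choice of orientation system.

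The main obstacle, and the real content of the proof, is to glue the vertex groups and edge maps into a genuine filtered chain complex rather than a cubical diagram commuting only up to homotopy. This requires introducing higher-order maps indexed by $k$-dimensional subfaces of the cube, defined by counting pseudoholomorphic $(k+2)$-gons in a single Heegaard multi-diagram that encodes the entire cube of resolutions simultaneously. The $A_\infty$-type relations among these polygon counts force the total differential to square to zero, and they imply that the filtration by cube-degree has $E_1$ page given by the $\HF$'s at the vertices with the surgery triangle maps, whose homology is Khovanov homology of $m(L)$ (the mirror appearing because the convention for $0$- versus $1$-resolution flips under orientation reversal of the surgery). Convergence to $\HF(S^3(L)) \otimes \HF(S^1 \times S^2)$ then follows from an iterated application of the surgery triangle, collapsing the total homology to that of a single distinguished resolution; the extra $\HF(S^1 \times S^2)$ factor reflects the free basepoint in the pointed Heegaard diagram used to set up the multi-diagram. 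Carrying out the polygon count and verifying the relations is the step where genuine analytic and combinatorial work is required; everything else is essentially bookkeeping once this framework is in place.
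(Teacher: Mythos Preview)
Your outline is correct and follows essentially the same route as the construction reviewed in Section~\ref{sec:ozssz}: a cube of Heegaard diagrams for the branched double covers of the resolutions, polygon-counting maps $d_{\mathbf{I}}$ assembled into a filtered complex $(X,D)$, identification of the $E^1$ page with $\CKh(m(\diagram))$ via the computation of $\HF(\#^k S^1\times S^2)$, and the extra $\HF(S^1\times S^2)$ tensor factor arising from the second basepoint (Remark~\ref{rem:multitosingly}).

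One imprecision worth correcting: the total homology does not collapse to $\HF$ of ``a single distinguished resolution.''  Every complete resolution is an unlink, so its branched double cover is $\#^k(S^1\times S^2)$, whereas $S^3(L)$ is generally not of this form.  What the iterated surgery triangle (the link surgeries spectral sequence of \cite{Ozsvath2005b}) actually shows is that the total homology of the $\{0,1\}^c$-cube agrees with $\HF$ of the manifold obtained by $\infty$-surgery on every component of the auxiliary link $K$, and that manifold is $S^3(L)$ itself, not any vertex of the cube.
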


Beyond the theoretical interest of connecting representation theory and symplectic topology, this theorem has applications to the study of transverse links and contact structures \cite{Baldwin2010} and twist numbers of braids \cite{Hedden2015}.   Baldwin \cite{Baldwin2011} has shown that each page of this spectral sequence is a link invariant.  In other words, the spectral sequence defines a sequence of link invariants which somehow interpolate between a link's Khovanov homology and the Heegaard Floer homology of its branched double cover, and this interpolation carries contact-theoretic information.

Despite all this interest, computing the pages of the spectral sequence remains a challenge.  For a link diagram $\diagram$, write $\OS(\diagram)$ for the spectral sequence from the theorem.  Computing $\OS(\diagram)$ directly involves counting holomorphic polygons in a high-dimensional symplectic manifold.  On the other hand, Khovanov homology is relatively easy to compute.  \Szabo \cite{Szabo2013} defined a combinatorial spectral sequence $\ESz(\diagram)$ in the style of Khovanov homology and conjectured that it is equivalent to $\OS(\diagram)$.  

\begin{restatable}{Conj}{seedszabo}\label{conj:original}\cite{Seed2011, Szabo2013}
  Let $\diagram$ be a diagram for a link $L \subset S^3$.
  \begin{itemize}
    \item (stronger) $\OS(\diagram) \cong \ESz(\diagram)$.
    \item (weaker) $\Sz(m(L)) \cong \HF(L) \otimes \HF(S^1 \times S^2)$.
  \end{itemize}
\end{restatable}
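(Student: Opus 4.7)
Both $\OS(\diagram)$ and $\ESz(\diagram)$ are filtered complexes built on the cube of complete resolutions $\{0,1\}^n$ of $\diagram$, so the plan is to construct a filtered chain equivalence between the two cube complexes; this would establish the stronger form and, a fortiori, the weaker one. I would organize the comparison by the dimension of the subcube: vertices, then edges, then higher faces.

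The vertex and edge comparison should be the straightforward part. For each vertex $v \in \{0,1\}^n$, the resolution $\diagram_v$ is a planar unlink with $k_v$ components whose branched double cover is $\#^{k_v - 1}(S^1 \times S^2)$, and $\HF$ of this manifold has rank $2^{k_v}$, matching the Khovanov chain group at $v$; the identification is canonical once one fixes the correspondence between circle decorations and generators of $\HF$. For the edge maps, both theories assign merge or split maps -- coming from the surgery exact triangle in $\HF$ on the \Ozsvath-\Szabo side and from an algebraic Frobenius structure on the \Szabo side -- and these agree up to grading conventions. This is the starting observation of \Szabo's construction.

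The main obstacle will be matching the higher-dimensional face maps of the cube. In $\OS$, the length-$k$ face map counts pseudoholomorphic $(k+2)$-gons in a high-dimensional Heegaard multi-diagram for the branched double cover, whereas in $\ESz$ it is given by an explicit combinatorial sum over decorated-arc configurations on the relevant crossings. I would attack this in two complementary ways. First, for small subcubes I would try to set up nice Heegaard multi-diagrams in the sense of Sarkar-Wang so that polygon counts become combinatorial, then check term by term that they reproduce \Szabo's configuration sum. Second, I would seek an axiomatic characterization of the filtered homotopy type -- via behaviour under disjoint union, connected sum, and the unoriented skein exact triangle -- and verify those axioms on both sides.

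The axiomatic route is better suited to the weaker conjecture, where only the $E_\infty$ page matters and induction on the crossing number becomes available; this seems genuinely tractable. For the stronger conjecture the direct polygon-count comparison appears unavoidable, and this is precisely where any real discrepancy would first appear, at subcubes involving three or more crossings, where $\OS$ allows symplectic contributions with no obvious combinatorial analogue. My expectation, based on that asymmetry, is that the axiomatic argument will suffice for the weaker statement, while the stronger statement could well fail once the higher-face terms are carefully unpacked.
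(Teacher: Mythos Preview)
The statement you are attempting to prove is a \emph{conjecture}, not a theorem; the paper does not prove it and in fact presents evidence that the most natural approach---essentially the one you outline---fails. So there is no ``paper's own proof'' to compare against, and your proposal should be read as a strategy for attacking an open problem rather than a proof.

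Your plan has a concrete error of expectation. You write that any discrepancy between $\OS$ and $\ESz$ ``would first appear at subcubes involving three or more crossings.'' The paper shows this is wrong: already for a \emph{two}-dimensional configuration (\Szabo's Configuration~1), the holomorphic-rectangle count on a branched multi-diagram disagrees with \Szabo's combinatorial map. The Heegaard Floer map there sends $v_+\otimes v_+$ to $v_+\otimes v_-$, which violates \Szabo's grading rule, whereas \Szabo's map is $v_+\otimes v_+\mapsto v_+\otimes v_+$. So your ``straightforward'' vertex-and-edge stage is fine, but the very next stage---two-dimensional faces---already breaks the naive identification. Your proposed Sarkar--Wang nice-diagram computation would detect exactly this mismatch rather than confirm agreement.

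The paper's diagnosis is that \Szabo's theory has a spherical symmetry (isotoping circles past $\infty$) that Heegaard Floer lacks because of basepoints. Its response is not to prove Conjecture~\ref{conj:original} but to reformulate it: introduce a doubly-pointed \Szabo theory $\CSz_{z,w}$ with an annular grading, and conjecture that \emph{this} matches $\BR\cong\OS$ under the obvious generator correspondence. For that modified theory the two-dimensional faces do agree (Proposition~\ref{prop:agreement}). If you want to pursue your program, the right target is the doubly-pointed conjecture, and even then the higher faces remain genuinely open.
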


Seed \cite{Seed2011} wrote software to compute \Szabo's spectral sequence and confirmed the conjecture for knots with at most twelve crossings.

This project began with an attempt to prove Conjecture~\ref{conj:original} by using simpler Heegaard diagrams than those of \cite{Ozsvath2005b}.  The zeroth page of $\OS(\diagram)$ is a direct sum of Heegaard Floer chain groups whose rank may be larger than the Khovanov chain group $\CKh(m(\diagram))$.  The first page of $\OS(\diagram)$ is obtained by taking the homology of each individual Heegaard Floer group, and this page is isomorphic to $\CKh(m(\diagram))$.  Meanwhile, \Szabo's spectral sequence starts from the Khovanov chain group.  So a first step towards proving Conjecture~\ref{conj:original} is build $\OS(\diagram)$ from Heegaard diagrams which are more obviously connected to Khovanov homology.  These diagrams, which we called \emph{branched}, first appeared in somewhat different form in~\cite{Grigsby2011}.  The Heegaard Floer chain groups of these diagrams have minimal rank, i.e.\ the differential vanishes.

\begin{restatable}{Thm}{mainthm}\label{thm:main}
  Let $\diagram$ be a diagram of a link $L \subset S^3$.  There is a spectral sequence $\BR(\diagram)$, built from branched diagrams, so that $\BR^0(\diagram) \cong \BR^1(\diagram) \cong \CKh(\diagram)$.  This spectral sequence is isomorphic to the \Ozsvath-\Szabo spectral sequence after the zeroth page.  In particular, $\BR^2(\diagram) \cong \Kh(m(L))$ and $\BR^\infty(\diagram) \cong \HF(S^3(L)) \otimes \HF(S^1 \times S^2)$.
\end{restatable}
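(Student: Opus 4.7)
The plan is to run the \Ozsvath-\Szabo construction using branched Heegaard diagrams in place of the ones used in \cite{Ozsvath2005b}, and then to compare the two cube-shaped filtered complexes directly. The statement about pages and $\infty$-page content is essentially a bookkeeping consequence of this comparison, so the real work is setting up $\BR(\diagram)$ and relating it to $\OS(\diagram)$ on the nose at the level of filtered chain complexes (up to filtered quasi-isomorphism).

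First I would fix, for each vertex $v$ of the cube of resolutions of $\diagram$, a branched Heegaard multi-diagram $H_v$ of the form hinted at in \cite{Grigsby2011} and adapted for the purposes of $\OS$. The key local feature to verify is that the chain group $\CF(H_v)$ carries vanishing differential, so that $\HF(H_v) \cong \CF(H_v)$ and the latter has rank equal to the Khovanov generator count at $v$, namely $2^{|v|}$ for a resolution with $|v|$ circles. This is the minimal-rank condition advertised in the statement. I would carry this out by exhibiting each branched diagram as a connect sum / stabilization of nice local pieces whose Floer complexes are computable by hand, and by matching generators with Khovanov enhancements (labeling circles by $x_\pm$).

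Next I would assemble the $H_v$ into a cube and define the higher edge maps. For each face of the hypercube, one uses counts of holomorphic polygons on the multi-diagram obtained by superimposing branched diagrams for the relevant resolutions. The $A_\infty$ / twisted-cube relations follow from the usual degeneration analysis of polygon moduli, exactly as for $\OS$. The resulting total complex is filtered by the cube coordinate, and its zeroth page is, by construction, $\bigoplus_v \CF(H_v)$, which has rank $\sum_v 2^{|v|} = \rank \CKh(\diagram)$. Because each $\CF(H_v)$ has trivial internal differential, $\BR^1 \cong \BR^0 \cong \CKh(\diagram)$ as groups. (I am treating the identification with $\CKh$ rather than just the rank count as part of the construction, by fixing the generator correspondence above.)

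The heart of the proof, and the place I expect the most trouble, is showing that $\BR(\diagram)$ and $\OS(\diagram)$ are isomorphic as spectral sequences from the first page onward. My approach is to build a filtered chain map between the two cubes, obtained by choosing, for each vertex $v$, a sequence of Heegaard moves taking $\Ozsvath$-\Szabo's diagram $H_v^{\OS}$ to $H_v$, and tracking the induced triangle maps $\Phi_v \co \CF(H_v^{\OS}) \to \CF(H_v)$. These vertex-level maps are quasi-isomorphisms by diagram-move invariance of $\HF$, and hence a filtered quasi-isomorphism once the higher face maps are filled in by a standard acyclic-assembly / homological-perturbation-lemma argument (for instance, the model provided by hypercubes of chain complexes in the Ozsv\'ath-Szab\'o link surgery formalism). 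From this one reads off $\BR^2 \cong \OS^2 \cong \Kh(m(L))$ and $\BR^\infty \cong \OS^\infty \cong \HF(S^3(L)) \otimes \HF(S^1 \times S^2)$. The main obstacle is the Floer-analytic check that these vertex quasi-isomorphisms can be extended coherently across all faces of the hypercube while respecting the cube filtration, i.e.\ that the obstruction cocycles to extending the map cube-by-cube actually vanish; this is where one has to commit to a careful choice of almost complex structures and perhaps to a neck-stretching argument on the multi-diagrams, rather than a purely formal comparison.
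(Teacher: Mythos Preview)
Your overall strategy---build the branched cube, then compare to the bouquet cube via Heegaard moves---matches the paper's. But there are two places where your sketch diverges from what the paper actually does, and one of them is a genuine gap.

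First, you assert that the $A_\infty$ relations (i.e.\ $(D')^2 = 0$) on the branched cube ``follow from the usual degeneration analysis of polygon moduli, exactly as for $\OS$.'' This is precisely the step the paper does \emph{not} take. The degeneration argument in \cite{Ozsvath2005b} relies on a cancellation lemma whose proof uses the specific geometry of bouquet diagrams: adjacent $\bet(I_i)$ and $\bet(I_{i+1})$ differ only on a punctured torus, and the relevant Maslov index computations exploit this. Branched diagrams do not have this local structure, so you cannot simply invoke the same argument. The paper explicitly flags this (``It is not clear from the get-go that $D'$ is a differential'') and instead deduces $(D')^2 = 0$ \emph{a posteriori}: it first constructs the handleslide map $\Phi \co X' \to X$, proves $\Phi$ intertwines $D'$ and $D$, and then uses injectivity of the filtration-preserving part $\Phi^0$ (which follows from minimality of the branched complexes) together with $D^2 = 0$ to force $(D')^2 = 0$.

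Second, your proposed mechanism for producing the filtered chain map---vertex-level triangle maps plus an acyclic-assembly/homological-perturbation extension, with attendant worries about obstruction cocycles---is more abstract than what the paper does. The paper defines $\Psi$ concretely: enlarge the cube by one coordinate recording ``before/after handleslide at crossing $i$,'' and let $\psi_i$ count holomorphic polygons along paths in this $(c+1)$-cube whose last coordinate changes. The equivariance $\psi_i D_{i-1} = D_i \psi_i$ is then a direct degeneration argument on these polygons, requiring a new cancellation lemma (for degenerations that separate the handleslide vertex from the $\boldA$ edge) which the paper proves by hand. Composing the $\psi_i$ gives $\Psi$, and the spectral sequence comparison is then just the standard fact that a filtered chain map inducing an isomorphism on $E^1$ induces isomorphisms on all later pages. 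This bypasses the obstruction-theoretic concerns you raise entirely: the higher components of $\Psi$ are not built inductively but are given by explicit polygon counts from the start.
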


We had hoped that the simplicity of branched diagrams would allow us to count holomorphic polygons and identify the maps in $\BR(\diagram)$ with those in $\ESz(\diagram)$.  Combined with the theorem, this would prove Conjecture~\ref{conj:original}.  In fact, these maps are different.  We propose that the discrepancy arises from the asymmetry with respect to basepoints between \Szabo (and Khovanov's) theory and Heegaard Floer homology.  Given planar isotopic diagrams $\diagram$ and $\diagram'$, the construction underlying Theorem \ref{thm:main} provides an obvious isomorphism $\BR(\diagram) \cong \BR(\diagram')$ by a based isotopy of Heegaard multi-diagrams.  But \Szabo's theory applies to \emph{spherical} link diagrams.  An isotopy from $\diagram$ to $\diagram'$ which crosses the point at infinity on $S^2$ induces an isotopy of Heegaard diagrams which crosses a basepoint.  The rank of $\BR(\diagram)$ does not change, but the gradings on individual generators (and therefore the correspondence with Khovanov homology) does.

We propose that this discrepancy can be understood by studying embeddings of knots in thickened annuli, or equivalently projections of links to the complement of two points in $S^2$.  There is a well-known \emph{annular grading} $k$ on the Khovanov homology of annular links, see~\cite{Roberts2013}.  We construct a doubly-pointed \Szabo homology theory which takes $k$ into account.  On the Heegaard Floer side, the two basepoints describe a link in each branched double cover.  This introduces a grading $\alex$, the \emph{Alexander grading}, on all the Heegaard Floer groups.  We make the following conjecture precise in the last section.

\begin{Conj}
  Let $\diagram$ be a spherical link diagram with basepoints $z, w \in S^2 \setminus \diagram$.  There is a spectral sequence $\ESz_{z,w}$ from the doubly-pointed \Szabo chain complex to $\HF(S^3(L)) \otimes \HF(S^1 \times S^2)$.  $\ESz_{z,w}$ is isomorphic to $\BR$ and therefore to $\OS$.  This isomorphism identifies the grading $q - 2k$ on $\CKh(\diagram)$ with $2(\gr - 2\alex)$ on $\HF(S^3(L)) \otimes \HF(S^1 \times S^2)$.
\end{Conj}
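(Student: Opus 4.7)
The plan is to prove the isomorphism $\ESz_{z,w} \cong \BR$ face by face on the Khovanov cube and then check the grading correspondence locally. Convergence of $\ESz_{z,w}$ to $\HF(S^3(L)) \otimes \HF(S^1 \times S^2)$ follows from this isomorphism together with Theorem~\ref{thm:main}, so there is no separate convergence argument to make. By Theorem~\ref{thm:main} and the construction of $\ESz_{z,w}$ promised for the last section, the two complexes share an underlying chain group, namely $\CKh(\diagram)$ equipped with the bigrading $(q,k)$; the work lies entirely in matching the hypercube of differentials and higher maps.

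First I would set up the face-by-face comparison. In $\BR$ each face of the cube carries a chain map obtained as a holomorphic polygon count in a branched Heegaard multi-diagram, while in $\ESz_{z,w}$ the corresponding map is a combinatorial sum over decorated configurations. I would begin with the one-dimensional faces, where the $\BR$ differential on the first page already agrees with the Khovanov differential, and then proceed inductively up the cube. For each higher face I would classify the resolution configurations that support nontrivial polygon counts and translate each such count into a doubly-pointed \Szabo configuration.

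The essential new input is the role of the two basepoints. \Szabo's original maps do not see the point at infinity, which is the source of the discrepancy with $\BR$ identified in the introduction. Using both $z$ and $w$ I would split each polygon count according to how the supporting resolution interacts with the two basepoints, and I expect that the resulting refined count is exactly the doubly-pointed \Szabo map. This should also cause basepoint-crossing isotopies to act compatibly on both sides, fixing the asymmetry that breaks the original conjecture.

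For the grading statement the $q \leftrightarrow 2\gr$ correspondence is standard from \cite{Ozsvath2005b}. The refinement $k \leftrightarrow 2\alex$ should follow from the fact that an annular resolution circle separating $z$ from $w$ lifts to a component of the link $\tilde{L}_{z,w} \subset S^3(L)$ determined by the two basepoints, contributing to $\alex$ by the usual recipe for Heegaard Floer link invariants. I would verify $q - 2k \leftrightarrow 2(\gr - 2\alex)$ on the generators of the simplest annular unknot diagrams and then extend to all resolutions by tracking how both sides change under crossing resolutions. The main obstacle, as in the original conjecture, is the explicit enumeration of holomorphic polygons in the branched multi-diagrams: even with the simplification afforded by branched diagrams and the extra bookkeeping provided by the doubly-pointed structure, \Szabo's higher configurations are intricate, and identifying the polygon counts with them will require a substantial extension of the arguments in \cite{Szabo2013} and of the techniques behind Theorem~\ref{thm:main}.
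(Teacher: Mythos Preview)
The statement you are attempting to prove is a \emph{conjecture}, not a theorem: the paper does not prove it, and there is no ``paper's own proof'' to compare against. What the paper actually establishes in support of the conjecture is limited. Proposition~\ref{prop:finalgradings} gives the bigraded isomorphism of underlying vector spaces (this handles your grading paragraph: the identification $q \leftrightarrow 2\gr$ and $k \leftrightarrow 2\alex$ is checked directly on generators, not by an inductive argument over resolutions). Proposition~\ref{prop:agreement} verifies the face-by-face comparison for two-dimensional configurations only, by drawing the eight relevant branched multi-diagrams and locating the unique holomorphic rectangle in each. The paper explicitly stops there and records the full statement as Conjecture~\ref{conj:me1}.

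Your proposal is therefore a strategy outline, not a proof, and you correctly identify in your last paragraph the step that is genuinely missing: the enumeration of holomorphic $n$-gons in branched multi-diagrams for $n \geq 5$ and their identification with \Szabo's higher configuration maps. Nothing in the paper, in \cite{Szabo2013}, or in the techniques behind Theorem~\ref{thm:main} currently provides this. In particular, the inductive scheme you sketch (``classify the resolution configurations that support nontrivial polygon counts and translate each such count into a doubly-pointed \Szabo configuration'') is precisely the content of the open problem; the paper's evidence that it should work is the two-dimensional check together with Seed's computational verification of the original Conjecture~\ref{conj:original}. Until those polygon counts are carried out, the statement remains conjectural.
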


$q$ is the quantum grading on Khovanov homology, $\gr$ is the Maslov grading, and $\alex$ is the Alexander grading induced by the pre-image in $S^3(L)$ of the unknot specified by $z$ and $w$.  This implies that there is a consistent grading on each page of the spectral sequence.  For $\OS$ this was originally conjectured by Baldwin (without the Alexander correction) in~\cite{Baldwin2011} building on Greene~\cite{Greene2013}.  This conjecture is consistent with all holomorphic polygon counts we are able to complete, and at the level of homology it is consistent with Seed's computations.  It implies Conjecture~\ref{conj:original}.  It suggests that $\ESz_{z,w}$ is more closely geometrically connected to Heegaard Floer homology than $\ESz$ and therefore is a more natural object of study. 

Experts will recognize $\gr - \alex$ as the $\delta$-grading on knot Floer homology.  Using the recipe from \cite{Sarkar2016} or \cite{Hunt2015}, one could define $\ESz_{z,w}$ over the polynomial ring $\bz/2\bz[K,K^{-1}]$ where the variable $K$ accounts for the $k$-grading.  Many variants of knot Floer homology are naturally defined over the ring $\bz/2\bz[U,U^{-1}]$.  We speculate that there is some connection between the $U$- and $K$-actions.

All of the constructions of paper use coefficients in $\bz/2\bz$, which we denote by $\bF$.  \Szabo homology with $\bz$-coefficients was introduced in~\cite{Beier2012}.  (Note that the resulting spectral sequence is from \emph{odd} Khovanov homology.)  Heegaard Floer homology may be defined with $\bz$-coefficients, but we have not carefully checked that the proof of Theorem~\ref{thm:main} carries over.

We begin by reviewing the construction of the \Ozsvath-\Szabo spectral sequence in preparation for Section~\ref{sec:handleslides}, in which we prove the main theorem.  We construct branched diagrams and prove some of their basic properties in Section~\ref{sec:brancheddiagrams}.  In Section~\ref{sec:szabo} we recall the definition of \Szabo homology and construct doubly-pointed \Szabo homology.  We assume some familiarity with Khovanov homology and Heegaard Floer homology throughout.

\subsection*{Acknowledgments} This project was suggested to me by John Baldwin when I was his student.  Not surprisingly, it owes a lot to his help.  I am also thankful to Eli Grigsby and Josh Greene for helpful conversations and suggestions.  

\section{The \Ozsvath-\Szabo spectral sequence}\label{sec:ozssz}

The construction of $\OS(\diagram)$ first appears in \cite{Ozsvath2005b}.  For more details, see~\cite{Roberts2008}.  Baldwin has shown that this construction does not depend on choices of analytic data~\cite{Baldwin2011}, so we will ignore them.  Our presentation will use Heegaard diagrams with two basepoints as described by Manolescu and \Ozsvath in~\cite{Manolescu2017} but at a much lower level of generality.  (Multi-pointed Heegaard diagrams for three-manifolds also appear in \cite{Ozsvath2008}.)

\begin{Def}
  Let $\Sigma$ be an oriented, genus $g$ surface.  Let $\bal = \{\al_1, \ldots, \al_{g + 1}\}$ and $\bet = \{\eta_1, \ldots, \eta_{g+1}\}$ each be sets of simple, disjoint, closed curves on $\Sigma$ which span $g$-dimensional lattices $H_1(\Sigma; \bz)$ (i.e.\ they each specify a handlebody bounded by $\Sigma$).  Assume that all the intersections between $\al$ and $\eta$ curves are transverse.  Let $\basew = \{w_1, w_2\}$ be a pair of basepoints which lie in different connected components of the complements of $\bal$ and $\bet$ in $\Sigma$.  The data $(\Sigma, \bal, \bet, \basew)$  is a \emph{two-pointed, balanced Heegaard diagram}.
\end{Def}

A standard Heegaard diagram specifies a cellular decomposition of a three-manifold with one $0$- and $3$-cell and $g$ $1$- and $2$-cells.  Likewise, a two-pointed, balanced Heegaard diagram specifies a cellular decomposition of a three-manifold with two $0$- and $3$-cells.  Alternatively, one may view two-pointed Heegaard diagram as a byproduct of a self-indexing Morse function on a three-manifold with two index zero and two index three critical points.

\begin{Rem}
Do not confuse these two-pointed Heegaard diagrams with those used in knot Floer homology.  In the classic knot Floer setting, a genus $g$ diagram representing a knot has two sets of $g$ curves and two basepoints.  Our diagrams have two sets of $g+1$ and two basepoints.
\end{Rem}

Let $Y$ be a closed three-manifold.  Let $K \subset Y$ be a link with $k$ components.  Choose some point $z \in Y \setminus K$.   Connect each component of $K$ to $z$ through pairwise disjoint arcs.   The union of $K$ with such a collection of arcs is called a \emph{bouquet} for $K$.

\begin{Def} Let $\Gamma$ be a bouquet for $K$.  A balanced, two-pointed Heegaard diagram $(\Sigma, \{\al_i\}_{i=1}^{g+1}, \{\eta_i\}_{i=1}^{g+1}, \basew)$ presenting $Y$ is \emph{subordinate to the bouquet $\Gamma$} if it satisfies the following conditions.
\begin{itemize}
\item For some $k$, the diagram $(\Sigma, \bal, \{\eta_{k+1}, \ldots, \eta_{g+1}\}, \basew)$ presents $Y \setminus N(\Gamma)$ where $N(\Gamma)$ is a normal neighborhood of $\Gamma$.
\item Surger $\eta_{k+1}, \ldots, \eta_{g+1}$ from $\Sigma$.  Each remaining $\eta_i$ lies on a punctured torus $\partial N_i \subset N(\Gamma)$ which surrounds the component $K_i$ for $1 \leq i \leq k$.
\item For $1 \leq i \leq k$, the curve $\eta_i$ is a meridian of $K_i$.
\end{itemize}
\end{Def}

Loosely, the first $k$ of the $\bet$ curves describe a normal neighborhood of $\Gamma$ and the remaining $\bet$ curves fill out the rest of $Y$.  Such a diagram exists for every pair $(Y,K)$, and the constructions in this section do not depend on a choice of bouquet~\cite{Roberts2013}.

Designate $\eta_1, \ldots, \eta_k$ as $\infty$-framed curves along the tori $N_i$.  Let $\gamma_i$ and $\delta_i$ be curves on $N_i$ with framings $0$ and $1$, respectively.  Without loss of generality, we may arrange that $|\be_i \cap \delta_i| = |\delta_i \cap \gamma_i| = |\gamma_i \cap \be_i| = -1$ and that each of these curves is disjoint from $\basew$.  For $I \in \{0,1,\infty\}^k$, define the set of curves $\eta(I)$ by
\[
	\eta_j(I) = 
	\begin{cases} \eta_i & \text{if } j > k \text{ or } I_j = \infty \\
	\ga_i & \text{if } I_j = 0 \\
	\delta_i & \text{if } I_j = 1 \end{cases}
\]
The diagram $(\Sigma, \bal, \bet(I),\basew)$ is a balanced, two-pointed Heegaard diagram for the result of $I$-framed surgery along $K$.

In all, we have constructed a family of Heegaard diagrams which realize all $0$- and $1$-surgeries along components of $K$.  For the rest of the section, $K$ will play an auxiliary role to the link $L$ whose Khovanov- and Floer-theoretic invariants will be related.

\begin{figure}[h]
\centering
 \includegraphics[width=.66\textwidth]{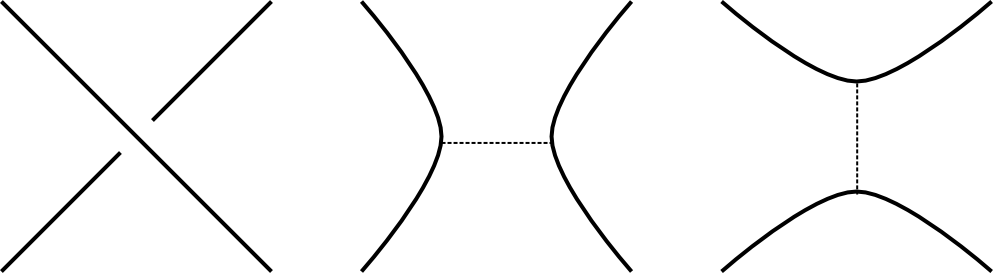}
 \caption{A crossing, its 0-resolution, and its 1-resolution with surgery arcs.  Note that this is the opposite of the usual convention in Khovanov homology.  This is why the \Ozsvath-\Szabo spectral sequence involves either the mirror of the link or the opposite of its branched double cover.}
\label{fig:resolutions}
 \end{figure}

Let $\diagram$ be a diagram for a link $L \subset S^3$ with $c$ crossings.  Each crossing of $\diagram$ may be resolved in one of the two ways shown in Figure~\ref{fig:resolutions}.  After fixing some ordering on the crossings, the complete resolutions of $\diagram$ are indexed by elements of $\{0,1\}^c$, the \emph{cube of resolutions}.   The resolutions of a single crossing are ordered by $0 < 1$ and the cube of resolutions is partially ordered by the product order.  Call the resolution $I'$ a \emph{successor} of $I$ and write $I \prec I'$ if $I < I'$ and the two differ in exactly one entry.  A \emph{path of resolutions} $\mathbf{I}$ is a chain of successors, i.e.\ $\mathbf{I} = \{I_1, I_2, \ldots, I_n \mid I_{i+1} \prec I_i\}$.  A path of length one is a single resolution.  Write $\diagram(I)$ for the diagram resulting from resolving $\diagram$ according to $I$.

Changes of resolution can be realized as surgeries on branched double covers.  Write $I^0$ for the resolution $(0, \ldots, 0)$.  In the diagram $\diagram(I^0)$ draw small auxiliary arcs connecting the two strands of each resolution as in Figure~\ref{fig:resolutions}.  One may change any $0$-resolution to a $1$-resolution by cutting out a small neighborhood of an auxiliary arc and gluing it back in, rotated $90$ degrees.  This lifts to $S^3(\diagram(I^0))$ as surgery: at each crossing the auxiliary arc lifts to a knot on which $0$-surgery produces $Y(\diagram(0, \ldots, 1, \ldots, 0))$.

Let $K$ be the link of these auxiliary knots in $S^3(D(I^0))$ and fix a $K$-bouquet diagram $(\Sigma, \bal, \bet(I^0), \basew)$.  For any resolution $I$ of $\diagram$ we constructed a bouquet diagram $(\Sigma, \bal, \bet(I), \basew)$ for $S^3(D(I))$.  From a path of resolutions $\mathbf{I} = \{I_1, \ldots, I_n\}$ one can build the \emph{bouquet multi-diagram}
\[
 \bouquet(\mathbf{I}) = (\Sigma, \bal, \bet(I_1), \bet(I_2), \ldots, \bet(I_n), \basew).
\]
In general, $\bet(I_j)$ and $\bet(I_{j+1})$ will have curves in common, which is unsuitable for Heegaard Floer homology.   We implicitly perturb identical curves so that they intersect twice pairwise and so that the resulting multi-diagram is admissible. We call two such curves \emph{parallel}.  Roberts showed that these perturbations can be done in a systematic way~\cite{Roberts2008}.  Note that $(\Sigma, \bet(I_i), \bet(I_{i+1}))$ presents a nearly-standard (two-pointed) diagram for a connected sum of $S^1 \times S^2$s; there are two non-parallel curves, and they meet each other once and miss every other curve, so they form an $S^3$ factor.  

As usual, $\CF(\Sigma, \bal, \bet, \basew)$ is generated over $\bf$ by $(g+1)$-tuples $(c_1, \ldots, c_{g+1})$ of intersection points between $\bal$ and $\bet$ curves so that if $c_i \in \al_j \cap \eta_k$ then $c_{i'} \notin \al_j \cup \eta_k$ for any $i' \neq i$.  We write $\CF(\Sigma, \bal, \bet(I), \basew) = \CF(\bal,\bet(I))$ when $\Sigma$ and $\basew$ are understood.  The differential $d$ on two-pointed $\CF(\bal,\bet(I))$ is essentially the same the single pointed theory's:
\[
d(x) = \sum_{y \in T_{\bal} \cap T_{\bet(I)}} \sum_{\substack{\phi \in \pi_2(x, y) \\ \mu(\phi) = 1,  n_{\basew}(\phi) = 0}} \| \widehat{\mathcal{M}}(\phi)\| y
\]
where
\begin{itemize}
  \item $T_{\bet(I)}$ is the torus $\eta_1(I) \times \cdots \eta_{g+1}(I) \subset \Sym^{g+1}(\Sigma)$ and likewise for $T_{\bal}$.
  \item $\pi_2(x, y)$ is the set of homotopy classes of Whitney bigons with edges along $\bal$ and $\bet$ and corners at $x$ and $y$.
  \item $\mu$ is the Maslov index.
  \item $n_{\basew} \co \pi_2(x, y) \to \bz^2$ records intersection numbers of $\pi \in \pi_2$ with $\{z_1\} \times \Sym^{g}(\Sigma)$ and $\{z_2\} \times \Sym^g(\Sigma)$.
  \item $\mathpzc{M}(\phi)$ is the moduli space of holomorphic representatives of $\phi$.
\end{itemize}.
Under the condition $\mu(\phi) = 1$, the space $\mathpzc{M}(\phi)$ is one-dimensional with a free $\br$-action.  Its quotient $\widehat{\mathpzc{M}}(\phi) = \mathpzc{M}(\phi)/\br$ is zero-dimensional and compact and therefore finite with cardinality $\|\widehat{\mathpzc{M}}(\phi)\|$.  The exact construction depends on analytic data which does not affect our discussion.

\begin{Rem}\label{rem:multitosingly}
There is a recipe for transforming a balanced, two-pointed Heegaard diagram for $Y$ into a singly-pointed Heegaard diagram for $Y \# S^1 \times S^2$.  Perform surgery along the two basepoints, i.e.\ remove a small disk around each basepoint and attach a cylinder along the boundaries.  Place a new basepoint on the cylinder.  The resulting diagram presents $Y \# S^1 \times S^2$.  Write $\mathcal{H}$ for the original diagram and $\mathcal{H'}$ for the new one.  It is clear that $\CF(\mathcal{H})$ and $\CF(\mathcal{H}')$ have the same set of generators.  With the basepoint on the cylinder, they have the same differential as well.  So $\HF(\mathcal{H}) = \HF(\mathcal{H}') = \HF(Y \# S^1 \times S^2)$.  We conclude from the K\"{u}nneth formula for Heegaard Floer homology that $\HF(\mathcal{H}) \cong \HF(Y) \otimes \HF(S^1 \times S^2)$.
\end{Rem}

For each path $\mathbf{I}$ of length greater than one define
\begin{align*}
 f_\mathbf{I}: \widehat{CF}(\bal, \bet(I_1)) \otimes \widehat{CF}(\bet(I_1), \bet(I_2)) \otimes \cdots &\otimes \widehat{CF}(\bet(I_{n-1}), \bet(I_n)) \\
 &\to \widehat{CF}(\bal, \bet(I_n))
\end{align*}
by
\[
	f_\mathbf{I}(x_1 \otimes \cdots \otimes x_n) = \sum_{y \in T_{\bal} \cap T_{\bet(I_n)}} \sum_{\substack{\phi \in \pi_2(x_1, \ldots, x_n, y) \\ \mu(\phi) = 0, \, n_{\basew}(\phi) = 0}} |\mathpzc{M}(\phi)| \, y.
\]

The notation is the same as above, with $\pi_2(x_1, \ldots, x_n, y)$ as the set of Whitney $n$-gons with corners at $x_1, \ldots, x_n, y$.

$(\Sigma, \bet(I_i), \bet(I_{i+1}))$ is a standard diagram for a connected sum of $S^1 \times S^2$s and therefore the group $\widehat{CF}(\bet(I_i), \bet(I_{i+1}))$ has a distinguished generator (and cycle) $\Theta_i$, see~\cite{Ozsvath2004b}.  For each path $\mathbf{I}$ of length greater than one there is a map $d_{\mathbf{I}}: \widehat{CF}(\bal, \bet(I_1)) \to \widehat{CF}(\bal,\bet(I_n))$  defined by
\[
	d_{\mathbf{I}}(x) = f_{\mathbf{I}}(x \otimes \Theta_1 \otimes \cdots \otimes \Theta_{n-1}).
\]
For paths of length one define $d_\mathbf{I}$ to be the usual differential on $\widehat{CF}$.  For two resolutions $I,I'$ (possibly equal), write $\mathpzc{P}(I,I')$ for the set of paths from $I$ to $I'$ and define
\[
	d_{I,I'} = \sum_{\mathbf{I} \in \mathpzc{P}(I,I')} d_{\mathbf{I}}.
\]
Define
\[
	X = \bigoplus_{I \in \{0,1\}^k} \CF(\Sigma, \bal, \bet(I), z)
\]
and define $D: X \to X$ by
\[
	D = \sum d_{I, I'}.
\]

\begin{Thm*}
The pair $(X,D)$ is a complex.  Its homology is isomorphic to $\HF(S^3(L) \# S^1 \times S^2) \cong \HF(S^3(L)) \otimes \HF(S^1 \times S^2)$.
\end{Thm*}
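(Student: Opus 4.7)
The plan is to prove the two assertions separately: first that $D^2 = 0$, and then that $H_*(X,D) \cong \HF(S^3(L) \# S^1 \times S^2)$.

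For $D^2 = 0$ I would analyze the ends of one-dimensional moduli spaces of holomorphic polygons. Fix a path $\mathbf{I} = \{I_1, \ldots, I_n\}$ and a class $\phi \in \pi_2(x, \Theta_1, \ldots, \Theta_{n-1}, y)$ with $\mu(\phi) = 1$ and $n_{\basew}(\phi) = 0$. The moduli space $\mathcal{M}(\phi)$ is a one-manifold whose Gromov compactification has boundary strata of four types: a Maslov-index-one bigon splitting off at the $x$-corner, a bigon at the $y$-corner, a bigon at some $\Theta_i$-corner, and a node splitting the $(n+1)$-gon into two lower-order polygons. The middle type contributes $d_\mathbf{I}(\cdots \otimes d\Theta_i \otimes \cdots)$, which vanishes because each $\Theta_i$ is a cycle in $\CF(\bet(I_i), \bet(I_{i+1}), \basew)$ (its diagram being a standard two-pointed presentation of a connected sum of copies of $S^1 \times S^2$). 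Summing the boundary-count identity $0 = \#\partial\overline{\mathcal{M}}(\phi)/\br$ over all $\phi$ and all paths $\mathbf{I}$ assembles the remaining three types of terms into precisely $D \circ D$, giving the chain complex property.

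For the homology identification I would use the cube filtration $\mathcal{F}_j X = \bigoplus_{|I| \geq j} \CF(\bal, \bet(I), \basew)$ and analyze the associated spectral sequence. The $E_1$ differential is induced by the triangle maps $d_\mathbf{I}$ for paths of length two; by the standard count of holomorphic triangles these are the cobordism maps for the $0$-surgery that changes a crossing's resolution, and they fit into the Heegaard Floer surgery exact triangle on the auxiliary link $K \subset S^3(\diagram(I^0))$. Higher-page differentials are induced by the higher-length path maps. By the \Ozsvath--\Szabo link surgery formula~\cite{Ozsvath2008}, this iterated mapping cone over the cube of resolutions is filtered chain-homotopy equivalent to $\CF$ of the manifold obtained by performing the prescribed $0$-surgeries on every component of $K$. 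By construction of $K$ that manifold is $S^3(L)$, and because our diagrams are two-pointed an additional $S^1 \times S^2$ factor appears, as in Remark~\ref{rem:multitosingly}. The K\"unneth theorem then supplies the tensor decomposition on the right side of the statement.

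The main obstacle is showing that the cube spectral sequence converges to $\HF$ of the fully surgered manifold rather than to some intermediate approximation: the higher-gon maps $d_\mathbf{I}$ for $n \geq 3$ are essential, and demonstrating that they glue the iterated surgery triangles into a single filtered quasi-isomorphism is exactly the content of the link surgeries formula. Technically, one must also perturb all parallel curves among the $\bet(I_j)$ simultaneously to guarantee admissibility and transversality for every multi-diagram in the cube at once, and verify that the resulting counts are independent of those perturbations.
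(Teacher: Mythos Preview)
Your plan for $D^2 = 0$ has a gap. When a Maslov-index-one $(n{+}1)$-gon degenerates along an interior chord, the chord need not have an endpoint on the $\bal$-edge. If it does, both sub-polygons carry an $\bal$-edge and the degeneration is a term in $D\circ D$, as you say. But if the chord's endpoints both lie on $\bet$-edges, one sub-polygon is a pure $\bet$-polygon connecting $\Theta$-generators, and the other is counted by some $d_{\mathbf{J}}$. These terms are \emph{not} part of $D\circ D$: they have the form $d_{\mathbf{J}}(x \otimes \cdots \otimes f(\Theta_i \otimes \cdots \otimes \Theta_j) \otimes \cdots)$ with $f$ a higher polygon map among the $\bet$-diagrams. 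You have handled the bigon case ($\Theta_i$ is a cycle), but to finish you also need $\sum_{\mathbf{J}} f_{\mathbf{J}}(\Theta_1 \otimes \cdots \otimes \Theta_m) = 0$ for all $m \geq 2$. This is the cancellation lemma (Lemma~4.5 of~\cite{Ozsvath2005b}, Lemma~7 of~\cite{Roberts2008}), and it requires its own Maslov-index argument; it is not a consequence of the $\Theta_i$ being cycles.

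The paper takes a different route for the homology identification. Rather than re-running the link-surgeries analysis in the doubly-pointed setting, it invokes Remark~\ref{rem:multitosingly}: surgering the Heegaard surface along the two basepoints produces a singly-pointed diagram for $Y \# (S^1 \times S^2)$ with literally the same generators and the same differential. Applying this uniformly across the cube converts $(X,D)$ into a singly-pointed complex $(X',D')$ with $X=X'$ and $D=D'$ as maps, and the singly-pointed statement is already the main theorem of~\cite{Ozsvath2005b}. Your approach essentially re-derives the link-surgeries formula in the two-pointed setting (which is what~\cite{Manolescu2017} does in far greater generality); the paper's reduction is shorter precisely because it cites those results rather than reproving them, at the cost of relying on the basepoint-surgery trick and a pointed Haken's lemma to match the diagrams.
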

\begin{proof}
  For diagrams with a single basepoint, this is the main theorem of \cite{Ozsvath2005b}.  The case with two basepoints follows from Theorems 11.8 and 11.9 of~\cite{Manolescu2017} which enormously expands on~\cite{Ozsvath2005b}.  We present a more modest extension of the argument.

  Following Remark~\ref{rem:multitosingly}, we may replace each doubly-pointed Heegaard diagram with a singly-pointed diagram.  The process transforms a doubly-pointed bouquet diagram $\mathcal{H}$ for $K \subset Y$ into a singly-pointed bouquet diagram $\mathcal{H}'$ for $K' \subset Y \# (S^1 \times S^2)$ where $K'$ is the image of $K$ under an embedding $Y \setminus B^3 \to Y \# (S^1 \times S^2)$ which respects the connected sum.  After applying a sequence of pointed Reidemeister-Singer moves, this diagram is a connected sum of diagrams for $Y$ and $S^1 \times S^2$ so that the basepoint lies in the connected sum region.  (This is Haken's lemma,~\cite{Hempel2004}, for pointed diagrams.)    Every pointed Reidemeister-Singer move on $\mathcal{H}'$ corresponds to a doubly-pointed Reidemeister-Singer move on $\mathcal{H}$.  Thus the sequence of pointed moves gives rise to a sequence of doubly-pointed moves.

  Now apply this process to all of the diagrams underlying $X$ to get an identical complex $X'$ (same generators, same differential).  By~\cite{Ozsvath2005b} and the K\:{u}nneth formula, the homology of $X'$ is isomorphic to $\HF(S^3(m(L))) \otimes \HF(S^1 \times S^2)$.
\end{proof}

The complex $(X, D)$ is filtered by the partial ordering on $\{0,1\}^k$ and so its homology can be computed via a spectral sequence as in \cite{McCleary2000}.  This is the \Ozsvath-\Szabo spectral sequence.

\begin{Thm*}[\Ozsvath, \Szabo] Let $\OS^j(\diagram)$ be the spectral sequence induced by the order filtration on $X$.
\begin{itemize} 
  \item $\OS^0(\diagram) = X$ as a group.  The differential $d_0$ is the sum of the internal differentials on each Heegaard Floer chain group.  Each of these groups is equal to $\widehat{CF}(\#^m(S^2\times S^1))$ for some $m \geq 0$.
  \item $\OS^1(\diagram) \cong \CKh(m(\diagram))$, the Khovanov chain group of the mirror of $\diagram$.
  \item $\OS^2(\diagram) \cong \Kh(m(L))$, the Khovanov homology of the mirror of $L$.
  \item $\OS^\infty(\diagram) \cong \widehat{HF}(S^3(L)) \otimes \HF(S^1 \times S^2)$.   
\end{itemize}
\end{Thm*}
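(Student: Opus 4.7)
The plan is to analyze the spectral sequence of the filtered complex $(X, D)$ page by page. The filtration on $X$ is by $|I| = \sum_j I_j$ on $\{0,1\}^c$; it is preserved by $D$ because each $d_{I,I'}$ requires $I \leq I'$, and it is bounded since $0 \leq |I| \leq c$. Thus the spectral sequence is well-defined and converges to $H_*(X,D)$, which has already been identified with $\HF(S^3(L)) \otimes \HF(S^1 \times S^2)$ in the preceding theorem; this gives the $\OS^\infty$ statement immediately.

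For the $\OS^0$ statement, the associated graded of $X$ is just $X$ itself as a group, and $d_0$ is the filtration-preserving component $\bigoplus_I d_{I,I}$, which is the sum of the internal Heegaard Floer differentials on each summand. Each complete resolution $\diagram(I)$ is an unlink with some number $r(I)$ of components, so $S^3(\diagram(I)) \cong \#^{r(I)-1}(S^1 \times S^2)$. Combined with the doubly-pointed convention from Remark~\ref{rem:multitosingly}, the complex $\CF(\Sigma, \bal, \bet(I), \basew)$ is equivalent to $\CF(\#^{m}(S^1 \times S^2))$ with $m = r(I) \geq 0$, as claimed.

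For the $\OS^1$ statement, I would use the computation $\HF(\#^m(S^1 \times S^2)) \cong \Lambda^* H^1(\#^m(S^1 \times S^2))$, which has rank $2^m$, matching the rank $2^{r(I)}$ of the Khovanov summand at resolution $I$, freely generated by labelings of the circles of $\diagram(I)$ by $x_\pm$. The identification sends a subset $S$ of components to the wedge of generators of the $H^1$ factors indexed by $S$, paired with the Khovanov label $x_-$ on $S$ and $x_+$ on the complement. The induced $d_1$ differential comes from length-two paths $\{I, I'\}$, and on an edge $I \prec I'$ is $x \mapsto f_{\{I,I'\}}(x \otimes \Theta)$, with $\Theta$ the top generator of $\CF(\bet(I), \bet(I'))$. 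Since only one crossing changes across an edge, this reduces to a local calculation in a genus-one multi-diagram. One checks by a direct holomorphic triangle count that the induced map on homology is exactly the Khovanov merge or split edge map (depending on whether the crossing change joins two components or separates one); with this, $(E^1, d_1) \cong (\CKh(\diagram), d_{\Kh})$, and the $\OS^2$ statement follows by taking homology.

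The main obstacle is the holomorphic triangle count that identifies each edge map of $d_1$ with the corresponding Khovanov merge or split; this is the technical heart of \cite{Ozsvath2005b}. The local reduction is essential, as a global count would be intractable, and one must choose the distinguished generators $\Theta$ consistently across all edges so that the identification is compatible with the Khovanov differential as a whole rather than edge-by-edge.
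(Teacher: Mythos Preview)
Your proposal is correct and follows the same approach as the paper's brief discussion: the paper itself does not give a full proof of this theorem (it is attributed to \Ozsvath and \Szabo), but its explanatory paragraph matches your outline exactly---the zeroth and first pages follow from the general form of a filtered spectral sequence together with the standard computation of $\HF(\#^m S^1\times S^2)$, the identification of $d_1$ with the Khovanov differential is the triangle count in the punctured torus where $\bet(I)$ and $\bet(I')$ differ, and the $\OS^\infty$ statement is convergence plus the preceding theorem. Your sketch in fact supplies more detail than the paper does here; the only cosmetic slip is writing $\CKh(\diagram)$ rather than $\CKh(m(\diagram))$ in your $E^1$ identification.
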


In any such spectral sequence the differential on the zeroth page is the part of $D$ which preserves the filtration.  This and the standard computation of $\HF(S^1 \times S^2)$ explains the first two bullets.  To prove the third and fourth facts, one must understand the moduli spaces of holomorphic polygons which contribute to $D$.  Bouquet diagrams are well-adapted to this situation because $\bet(I_i)$ and $\bet(I_{i+1})$ differ only on some punctured torus.

\section{Branched diagrams}\label{sec:brancheddiagrams}

In this section we introduce special diagrams for the branched double cover of a resolution of a link.  They first appeared in a different guise in~\cite{Grigsby2011}.  Branched diagrams are less suited than bouquet diagrams for counting holomorphic polygons, but they have a much clearer connection to Khovanov homology:  $\CF(\branched(\diagram(I))) \cong \CKh(\diagram(I))$ with vanishing Heegaard Floer differential.  In this sense, branched diagrams have minimal complexity among those which realize the \Ozsvath-\Szabo spectral sequence.

\begin{figure}[h]
\centering
\includegraphics[width=.66\textwidth]{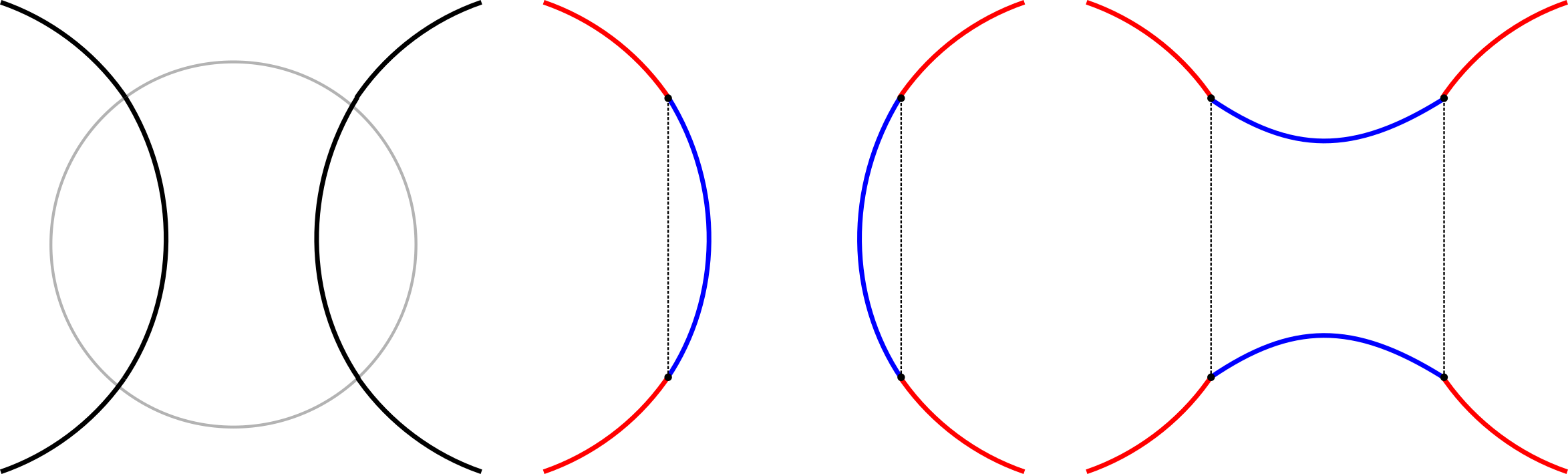}
\caption{On the left, the 0-resolution of a crossing and a small circle.  In the middle, one side of the corresponding local picture on a branched diagram.  (The other side is identical.)  On the right, the same for the 1-resolution.}
\label{fig:localbranched}
\end{figure}

Let $L \subset S^3$ be a link and $\diagram$ a spherical diagram for $L$ with $c$ crossings.  Draw a small circle around each crossing so that it contains exactly two arcs of $L$.  Let $I^0$ be the all-zeros resolution of $\diagram$.  There is a diagram $\diagram(I^0)$ for this resolution which differs from $\diagram$ on in the small circles.  On $\diagram(I^0)$, color the arcs inside the circles blue, color the arcs outside the circles red, and add dotted arcs as shown in Figure~\ref{fig:localbranched}.  For a component with no crossings, draw a dotted arc to divide it into two pieces and color one side red and the other blue.  Now make a copy of this diagram.  Properly interpreted, this pair of pictures is a Heegaard diagram.  The dotted arcs are branch cuts connecting the two spheres to form a surface $V$.  The red and blue arcs on each side are connected through the branch cuts to form sets of red and blue circles which we denote by $\boldA$ and $\boldB(I^0)$, respectively.

\begin{figure}[h]
\centering
\def\svgwidth{.66\columnwidth}
\input{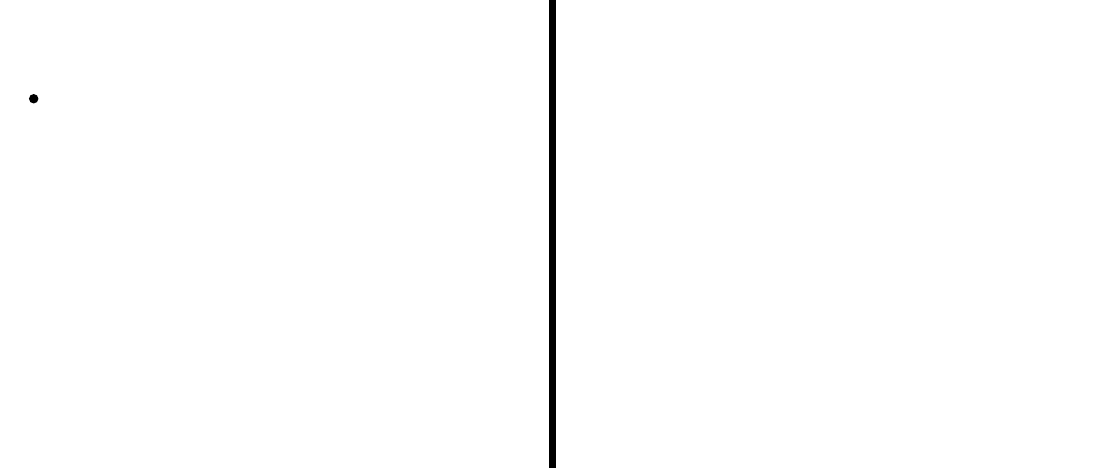tex}
\caption{A branched diagram from the $(1,0,0)$ resolution of a trefoil with the crossings ordered clockwise from the top left.  The bar in the middle indicates that the two diagrams lie on different spheres, connected through the branch cuts.}
\label{fig:trefoilbranched}
\end{figure}

\begin{Prop} The Heegaard diagram $(V, \boldA, \boldB(I^0))$ presents $S^3(\diagram(I^0))$.\end{Prop}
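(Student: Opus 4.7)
The plan is to exhibit an involution on the presented 3-manifold whose quotient reveals the branched-cover structure. By construction, $V$ is a double cover of $S^2$ branched at the endpoints of the dotted arcs, so it carries a natural deck transformation $\tau \co V \to V$ that swaps the two copies of $S^2$ and fixes the branch points. Each red and each blue arc on $S^2$ has its endpoints at branch points; consequently its preimage in $V$ is a simple closed curve on which $\tau$ acts as a reflection through the two branch points. In particular $\tau$ preserves $\boldA$ and $\boldB(I^0)$ setwise, so we may choose $\tau$-invariant compressing disks and extend $\tau$ to an orientation-preserving involution of the closed 3-manifold $M$ presented by the Heegaard diagram.

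I would then identify $M/\tau$ with $S^3$. The Heegaard surface quotients to $V/\tau = S^2$. Each of the two handlebodies $H_{\boldA}$ and $H_{\boldB}$ quotients to a $3$-ball: cutting $H_{\boldA}$ along its $\tau$-invariant compressing disks gives a $3$-ball on which $\tau$ acts by a reflection, whose quotient is again a $3$-ball; the boundary identifications that recover $H_{\boldA}$ from this cut $3$-ball are $\tau$-equivariant, so they descend to yield $H_{\boldA}/\tau \cong B^3$. Gluing the two quotient balls along $S^2$ gives $M/\tau \cong S^3$, and $M \to S^3$ is a double cover branched over the fixed set of $\tau$.

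The remaining and most delicate step is identifying the image of the fixed set with $\diagram(I^0)$. The fixed set of $\tau$ in $M$ consists of the branch points on $V$ together with an arc through each $\tau$-invariant compressing disk connecting the two branch points on its boundary. Under the projection to $S^3 \cong B_{\boldA} \cup_{S^2} B_{\boldB}$, these fixed arcs land in the two $3$-balls and project to arcs on $S^2$. I would verify by local inspection of Figure~\ref{fig:localbranched}, at each small circle around a crossing and at each crossingless component, that the fixed arcs in $H_{\boldA}$ project to the red arcs and those in $H_{\boldB}$ project to the blue arcs, so that globally the fixed set projects to $\diagram(I^0) \subset S^2 \subset S^3$. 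The main obstacle is precisely this last combinatorial check: it is sensitive to the exact placement of the compressing disks inside each local piece of $V$ and to consistently tracking which handlebody produces which color so that the reassembled projection is $\diagram(I^0)$ on the nose, not merely isotopic to it through some rearrangement of colors.
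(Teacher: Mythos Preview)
Your approach is correct but runs in the opposite direction from the paper's. The paper argues \emph{forward}: place $\diagram(I^0)$ on a $2$-sphere $S \subset S^3$, push the blue arcs into one complementary ball and the red arcs into the other, so that $\diagram(I^0)$ is in bridge position with bridge sphere $S$. The branched double cover of each ball over its (trivial) tangle is a handlebody whose boundary is the branched cover $V$ of $S$ and whose compressing curves are precisely the preimages of the arcs, i.e.\ $\boldA$ on one side and $\boldB(I^0)$ on the other. This immediately identifies both the Heegaard splitting and the branch locus, so the step you single out as ``most delicate'' --- matching the fixed set to $\diagram(I^0)$ --- costs nothing in the forward direction.

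Your reverse construction (build the involution on $M$, quotient, identify $S^3$ and the branch set) is a legitimate alternative and is essentially the inverse of the paper's bridge-position argument. One small correction: in a balanced two-pointed diagram there are $g+1$ compressing disks, so cutting $H_{\boldA}$ along all of them produces \emph{two} $3$-balls, not one. This does not break your argument, since $\tau$ swaps the two basepoints $w_1, w_2$ and hence swaps the two balls, so the quotient is still a single $3$-ball; but the intermediate sentence should be adjusted. The payoff of the paper's route is brevity and the automatic identification of the branch set; the payoff of yours is that it makes the involution on $M$ explicit, which can be useful if you later want to track equivariance of other structures.
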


\begin{proof} Project the diagram $\diagram(I^0)$, along with the small circles around the crossings, onto a $2$-sphere $S\subset S^3$.  There are $4k$ points at which the diagram meets the small circles.  Leaving those fixed, gently lift the blue curves off of the sphere and push the red curves into it.  The two balls bounded by $S$ lift, in the branched cover, to handlebodies with (co-)attaching curves $\boldA$ and $\boldB$, respectively.  This Heegaard splitting is described by $(V, \boldA, \boldB(I^0))$. \end{proof}

Let $I$ be another resolution of $\diagram$.  We define a set of $2c$ curves $\boldB(I)$ by comparison with $\boldB(I^0)$. For each crossing where $I$ and $I^0$ agree, $\boldB(I)$ contains two curves parallel to those in $\boldB(I^0)$.  At crossings where they disagree, $\boldB(I)$ contains the green curves in Figure~\ref{fig:localbranchedpath}.  Place a basepoint $w$ on $S^2$ in the complement of $\diagram$ and all the little circles around crossings.  Write $\basew = \{w_1, w_2\}$ for the set of lifts of $w$ to $V$.

\begin{Def} 
  For any resolution $I$ of $\diagram$, the Heegaard diagram $\branched(I) = (V, \boldA, \boldB(I), \basew)$ is the \emph{branched diagram} of $\diagram(I)$.
\end{Def}

\begin{Lemma}
  The diagram $\branched(I)$ is a balanced, doubly-pointed Heegaard diagram which presents $S^3(\diagram(I^0))$.
\end{Lemma}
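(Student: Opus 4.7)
The plan is to verify the three claims of the lemma by leveraging the preceding proposition together with the locality of the branched-diagram construction: (i) balance, (ii) admissibility of the basepoints, and (iii) identification of the three-manifold as the branched double cover $S^3(\diagram(I))$.

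For balance, note that $V$ and $\boldA$ were fixed once and for all in the construction of $\branched(I^0)$, so $|\boldA| = g+1$ where $g$ is the genus of $V$, by the preceding proposition. The set $\boldB(I)$ is obtained from $\boldB(I^0)$ by a local two-for-two replacement of curves inside each small circle around a crossing with $I_j = 1$ (Figure~\ref{fig:localbranchedpath}), so $|\boldB(I)| = |\boldB(I^0)| = g+1$ as well. For the basepoints, $w_1$ and $w_2$ are the two lifts of a point $w \in S^2$ chosen to avoid $\diagram$ and all the crossing-circles; they therefore lie away from $\boldA \cup \boldB(I)$, and they lie on the two distinct spheres making up $V$, so they sit in different components of $V \setminus (\boldA \cup \boldB(I))$.

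The main content is identifying the three-manifold. The key point is that the branched-diagram construction is purely local at each crossing: inside a given small circle the picture depends only on whether that crossing is $0$- or $1$-resolved. Consequently, running the construction of the preceding proposition directly on $\diagram(I)$ (drawing blue arcs inside each small circle according to the resolution $I$, red arcs outside, and the same branch cuts on two copies of the sphere) produces exactly the triple $(V, \boldA, \boldB(I))$, not as a modification of $\branched(I^0)$ but as the branched diagram of $\diagram(I)$ itself. The preceding proposition then applies verbatim with $I^0$ replaced by $I$, yielding the desired Heegaard splitting.

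The main (and essentially only) obstacle is the picture-level verification that the green curves of Figure~\ref{fig:localbranchedpath} at a $1$-resolved crossing agree with the blue curves the preceding proposition's construction would draw afresh in a small circle around a $1$-resolved crossing. This is a local check confined to a single small circle, and once confirmed the lemma follows with no further computation.
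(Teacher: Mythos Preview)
Your identification of the three-manifold via locality of the construction is fine and is essentially what the paper takes for granted. The gap is in your verification that the diagram is balanced. The condition to check is that $w_1$ and $w_2$ lie in different components of $V \setminus \boldA$ \emph{and} in different components of $V \setminus \boldB(I)$, not merely of $V \setminus (\boldA \cup \boldB(I))$. Your argument ``$w_1$ and $w_2$ lie on the two distinct spheres, so they sit in different components'' does not establish either of these. The two spheres are joined along the branch cuts, and when only one family of attaching curves is removed, components of the complement can and do contain pieces of both spheres; sitting on different spheres says nothing by itself.

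The paper's argument is genuinely different here: it places $w$ near a fixed crossing and exhibits an arc on $V$ from $w_1$ to $w_2$ meeting a single $\boldA$ curve transversely in exactly one point, and a second arc meeting a single $\boldB(I)$ curve in one point. Since each family of $g+1$ curves spans a rank-$g$ sublattice of $H_1(V)$, the complement of each family has exactly two components, and an arc with a single transverse intersection forces its endpoints into distinct components. Your proposal is missing this step (or an equivalent one), so as written it does not prove the balanced condition.
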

\begin{proof}
  The only thing to show is that $w_1$ and $w_2$ lie in disjoint regions in $V \setminus \left(\bigcup \boldA\right)$ and $V \setminus \left(\bigcup \boldB(I)\right)$ for every $I$, i.e.\ that the diagram is balanced.  Without loss of generality we may think of $w$ as sitting at $\infty \in S^2$ and assume that there is some closest crossing of $\diagram$ to $\infty$.  With this setup, it is easy to draw a curve on $V$ which connects $w_1$ and $w_2$ and intersects a single $\boldA$ curve only once.  A different curve connects the two basepoints and intersects a single $\boldB(I)$ curve only once.  This implies that the diagram is balanced.
\end{proof}

\begin{figure}[h]
\centering
 \def\svgwidth{.66\columnwidth}
 \input{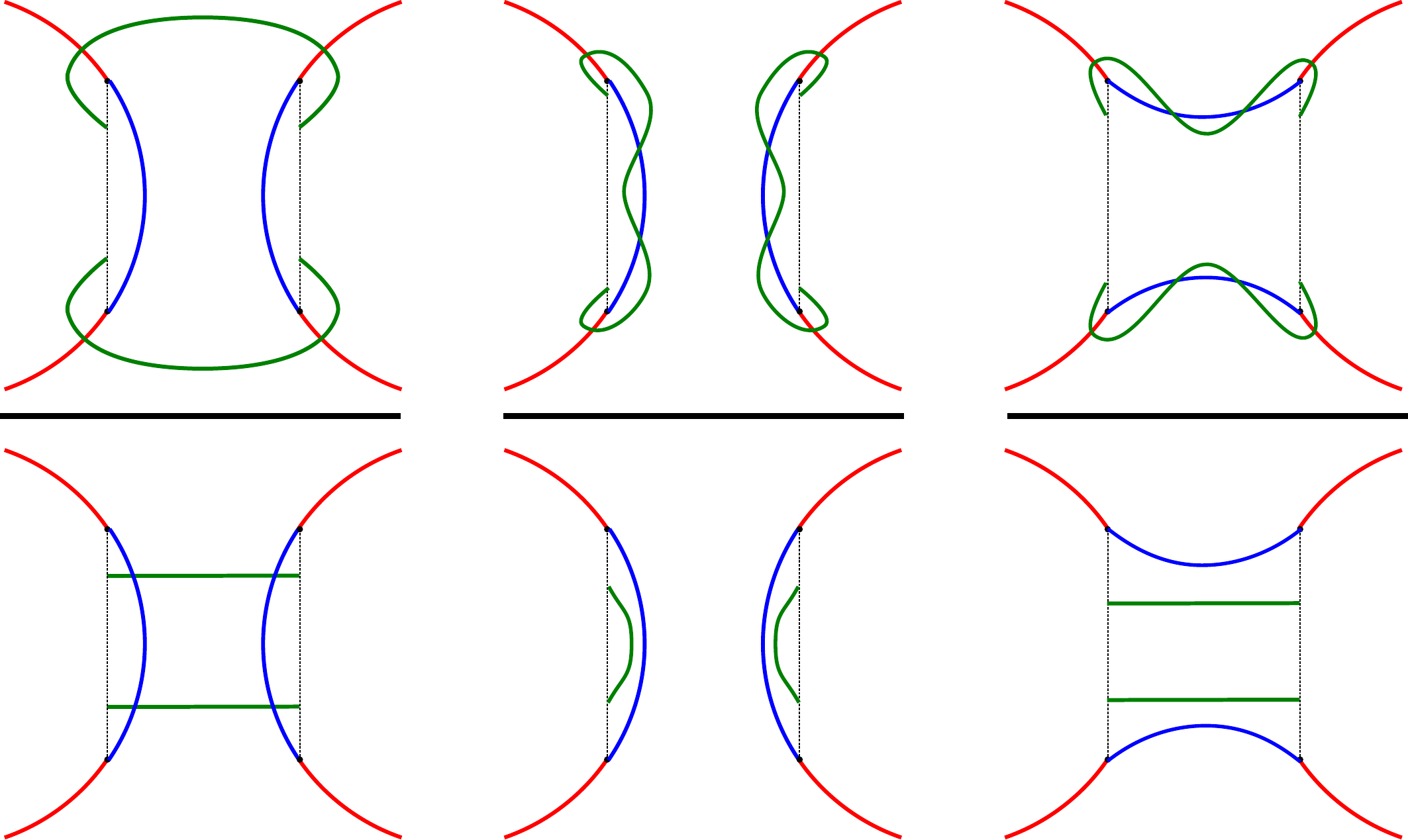tex}
 \caption{Three different local pictures of a branched multi-diagram for a change of resolution.  On the left, the local picture at the changed resolution.  The other pictures are local pictures at unchanged crossings.  }\label{fig:localbranchedpath}
\end{figure}

\begin{Def} For a path of resolutions $\mathbf{I}$, the Heegaard multi-diagram \[
	\branched(\mathbf{I}) = (V, \boldA, \boldB(I_1),\ldots, \boldB(I_n), \basew)
\] 
is the \emph{branched multi-diagram} of $\mathbf{I}$.  \end{Def}

Note that $(V, \boldB(I_i), \boldB(I_{i+1}), \basew)$ is a standard Heegaard diagram for $\#^{2c} (S^2 \times S^1)$.  Thus there is a highest degree generator (and cycle) in $\CF(V, \boldB(I_i), \boldB(I_{i+1}), \basew)$.

\section{Bouquet diagrams from branched diagrams}\label{sec:bouquet}

Using branched diagrams, we can define a group $X' = \bigoplus_{I \in \{0,1\}^c} \CF(\branched(I))$ and a map $D': X' \to X'$ by analogy with $X$ and $D$.  It is not clear from the get-go that $D'$ is a differential, much less that the argument from Section \ref{sec:ozssz} produces a spectral sequence isomorphic to $\OS$.  Rather than attempting to adapt \Ozsvath and \Szabo's proof to this context, we will show that for every branched (multi-)diagram there is a sequence of handleslides which transform it into a bouquet (multi-)diagram.  Then we show that the quasi-isomorphisms of Heegaard Floer chain complexes induced by these handleslides induce an isomorphism of spectral sequences.\footnote{The Heegaard Floer homology of a three-manifold does not depend on the choice of Heegaard diagram: given two diagrams for $Y$  we can always connect them by a series of (pointed) isotopies, handleslides, and (de)stabilizations.  These moves induce natural isomorphisms on Heegaard Floer homology groups.  But it is not clear \emph{a priori} that they induce isomorphisms of spectral sequences, e.g.\ that they intertwine the entire differential on $X'$.}

Let $\diagram$ be a diagram of $L \subset S^3$ and form the diagram $(V, \boldA, \boldB(I^0))$.  The $\boldB(I^0)$ curves are paired together at crossings of $\diagram(I^0)$.  Let $\boldB'(I^0) \subset \boldB(I^0)$ be a choice of one such curve at each crossing.  Slide each element of $\boldB'(I^0)$ over the curve with which it is paired in $\boldB(I^0)$.  Diagrammatically, this replaces the chosen curve with a circle which contains the two branch cuts,  see Figure~\ref{fig:nearbybouquet}.  Call the collection of such circles $C$ and let $\bet(I^0) = C \cup \boldB'(I^0)$.  For any other resolution $I$, define $\bet(I) =  C \cup \boldB'(I)$ as follows: at a $0$-resolution, $\boldB'(I^0)$ and $\boldB'(I)$ contain parallel curves.  At a $1$-resolution, $\boldB'(I)$ instead contains a curve which intersects the corresponding curve in $\boldB'(I^0)$ once and does not intersect any other $\boldB'(I)$ curves.  

\begin{figure}[h]
\centering
 \includegraphics[width=.5\textwidth]{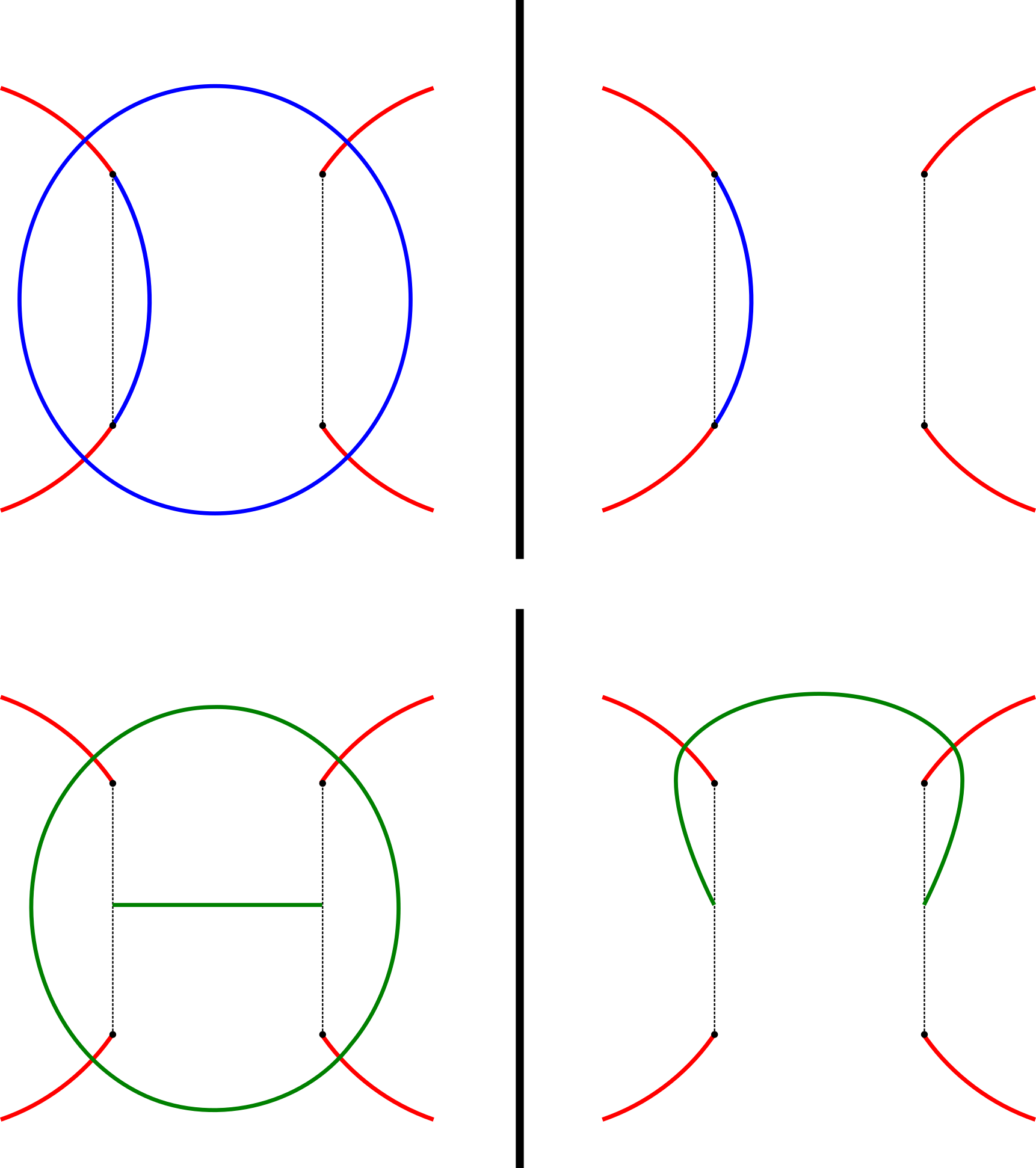}
 \caption{On top, both sides of $\bouquet(I)$ at a $0$-resolution.  On the bottom, both sides of $\bouquet(I)$ at a $1$-resolution.}\label{fig:nearbybouquet}
\end{figure}

\begin{Prop} The Heegaard diagram $\bouquet(I)$ presents $S^3(\diagram(I))$ and is a bouquet diagram for the surgery arcs from the \Ozsvath-\Szabo construction. \end{Prop}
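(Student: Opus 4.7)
The plan is to verify that $\bouquet(I)$ coincides, as a Heegaard multi-diagram, with the bouquet diagram $(\Sigma, \bal, \bet(I), \basew)$ from Section~\ref{sec:ozssz} constructed from the auxiliary surgery arcs on $\diagram(I^0)$. Since that diagram is by construction subordinate to the bouquet of auxiliary arcs and presents $S^3(\diagram(I))$, both claims of the proposition follow from this identification.

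I would first handle the base case $I = I^0$. By construction, $\bouquet(I^0)$ is obtained from $\branched(I^0)$ by $c$ handleslides --- one at each crossing, sliding a chosen curve of $\boldB(I^0)$ over its paired partner --- so $\bouquet(I^0)$ still presents $S^3(\diagram(I^0))$. To see that it is subordinate to the auxiliary bouquet, I would examine the local picture at a crossing. The portion of $V$ arising from the two branch cuts at a crossing is a punctured torus, and this torus can be identified with $\partial N_i$, the boundary of a regular neighborhood of the corresponding auxiliary knot $K_i$ in $S^3(\diagram(I^0))$. On this local torus the $C$-circle (produced by the handleslide) and the remaining curve of $\bet(I^0)$ form a pair of simple closed curves intersecting once, and a direct local check identifies them with the meridian and $0$-framing curves of $K_i$ in the Ozsv\'ath-Szab\'o framing convention. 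The remaining curves of $\bet(I^0)$ lie outside the local tori and fill out a Heegaard diagram for the exterior of $\bigcup_i N_i$, completing the verification of the bouquet conditions.

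For a general resolution $I$, the diagram $\bouquet(I)$ differs from $\bouquet(I^0)$ only at $1$-crossings, where one curve is replaced by $\boldB'(I)$: a curve supported on the same local torus $\partial N_i$, intersecting the original curve once and disjoint from all other $\bet(I)$ curves. Matching this against the framing conventions of Section~\ref{sec:ozssz} identifies $\boldB'(I)$ on $\partial N_i$ with the $1$-framed curve $\delta_i$. Therefore $\bouquet(I)$ is the bouquet diagram for $I$-framed surgery on $K$ inside $S^3(\diagram(I^0))$, and so represents $S^3(\diagram(I))$. The main obstacle is the local identification in the base case: one must verify that the $C$-circle and its partner really realize the correct meridian and $0$-framing of $K_i$ on $\partial N_i$, rather than some other pair of curves with the same intersection data. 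I would settle this by drawing the branched cover of the local crossing disk explicitly, identifying the resulting punctured torus with $\partial N_i$, and using the branch-cut structure to transport the surface framing data on $V$ into framings of $K_i$ in the branched cover.
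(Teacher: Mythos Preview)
Your overall plan---verify that $\bouquet(I)$ has the structure required of the diagrams $(\Sigma,\bal,\bet(I),\basew)$ from Section~\ref{sec:ozssz}---is the paper's approach as well. But your local identification at a crossing contains a concrete error.

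You assert that the $C$-circle and the remaining curve of $\bet(I^0)$ at that crossing ``form a pair of simple closed curves intersecting once'' and realize the meridian and $0$-framing of $K_i$ on $\partial N_i$. This cannot hold: both curves belong to the single attaching set $\bet(I^0)$ and are therefore disjoint. Geometrically, the $C$-circle lives on one sheet and encloses both branch cuts, while the $\boldB'(I^0)$ curve sits entirely inside it. In the bouquet structure the $C$-circles are not framing curves on $\partial N_i$ at all; they play the role of the curves $\eta_{k+1},\ldots,\eta_{g+1}$, i.e.\ the curves which together with $\boldA$ describe the complement of a neighborhood of the bouquet. The curves that genuinely lie on the punctured torus and meet once are the two possible $\boldB'$ curves at that crossing---the one for the $0$-resolution and the one for the $1$-resolution---and it is this pair that the paper identifies as a meridian and longitude of the lifted surgery knot. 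Your proposed ``direct local check'' would, if carried out, reveal exactly this: the $C$-circle is (essentially) the boundary of the local punctured torus, not a homologically nontrivial curve on it.

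The paper also organizes the first claim more directly than you do. Rather than treating $I^0$ as a base case and reaching general $I$ through the surgery interpretation, it simply observes that for \emph{every} $I$ the diagram $\bouquet(I)$ differs from $\branched(I)$ by sliding a $C$-curve over the $\boldB'(I)$ curve at each crossing, so $\bouquet(I)$ presents $S^3(\diagram(I))$ immediately. Once the local picture is corrected as above, your inductive route would also succeed, but the direct handleslide observation makes it unnecessary.
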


\begin{proof} The diagram $\bouquet(I)$ differs from $\branched(I)$ by sliding the $C$ curve over the $\boldB'(I)$ curve at each crossing, so certainly $(V, \bal, \bet(I))$ presents $S^3(\mathpzc{D}_I)$.  At (the branched cover) of each crossing $\boldB'(I)$ consists of an element of $C$ and one of two other curves.  These latter two curves intersect each other once and do not intersect any other curves, so they lie on a punctured torus on $V$.  They are clearly a meridian and longitude for the lift of a surgery knot.
\end{proof}

For the remainder of this article, ``bouquet diagram'' will mean a diagram of the form $\bouquet(I)$.

\subsection{Admissibility}
\label{subsec:admissibility}
In this section we prove that all of these diagrams are weakly admissible in the sense of~\cite{Ozsvath2008}.  Let $I$ be a resolution of the link $L$.  On each side of $\branched(I)$ there are circles formed by alternating $\boldA$ and $\boldB$ curves.  Fixing our attention on one side of the diagram, these circles are in bijection with closed components of $\diagram(I)$.  For a component $Q$ of $\diagram(I)$, we say that a curve $\gamma$ in $\boldA$ or $\boldB$ \emph{belongs to $Q$} if $\gamma$ is a subset of $Q$ through this correspondence.

\begin{Prop}\label{periodicdomains}Branched diagrams are weakly admissible.\end{Prop}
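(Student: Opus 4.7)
The plan is to show that every nontrivial periodic domain $\mathcal{P}$ on $\branched(I) = (V, \boldA, \boldB(I), \basew)$ with $n_{\basew}(\mathcal{P}) = 0$ has both positive and negative local multiplicities. The argument exploits the branched covering involution $\sigma$ on $V$ that interchanges the two spheres from which $V$ is built, preserves each curve in $\boldA \cup \boldB(I)$ setwise, and swaps $w_1$ with $w_2$.

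First I would construct one distinguished periodic domain $\mathcal{P}_Q$ for each closed component $Q$ of $\diagram(I)$. The component $Q$ bounds two disks on $S^2$; let $D_Q$ be the one not containing the base-point $w$, and let $\widetilde D_Q^{+}, \widetilde D_Q^{-}$ be its lifts to the two sheets of $V$. Define $\mathcal{P}_Q := \widetilde D_Q^{+} - \widetilde D_Q^{-}$. Because the $\boldA$- and $\boldB$-arcs along $Q$ alternate and are exchanged across the branch cuts, the boundary of $\mathcal{P}_Q$ is a signed sum of precisely the full $\boldA$- and $\boldB$-curves belonging to $Q$, so $\mathcal{P}_Q$ is a periodic domain. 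The choice of $D_Q$ forces $n_{\basew}(\mathcal{P}_Q)=0$, and by construction $\sigma_*\mathcal{P}_Q = -\mathcal{P}_Q$.

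Next I would argue that every admissible periodic domain is a linear combination of the $\mathcal{P}_Q$. The resolved diagram $\diagram(I)$ is a planar unlink with $k$ components, so $S^3(\diagram(I)) \cong \#^{k-1}(S^1\times S^2)$ has $b_1 = k-1$; a rank count for the two-pointed Heegaard diagram matches this against the span of the $\mathcal{P}_Q$ modulo the single natural relation coming from $\sum_Q \widetilde D_Q^{\pm}$ covering each sheet. Once this is in hand, any admissible $\mathcal{P} = \sum_Q a_Q \mathcal{P}_Q$ inherits $\sigma_*\mathcal{P} = -\mathcal{P}$, so the multiplicity of $\mathcal{P}$ at any region $R$ is the negative of its multiplicity at $\sigma(R)$. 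Hence if $\mathcal{P}$ is nonzero, then some region carries a nonzero multiplicity $m$ and its $\sigma$-image carries $-m$, giving both signs.

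I expect the main obstacle to be the second step: one must carefully match the rank of the space of admissible periodic domains against the number of components of $\diagram(I)$ (with the correct relation accounting for the basepoint constraint), and also verify that $\partial \mathcal{P}_Q$ really consists of full $\boldA$- and $\boldB$-curves rather than broken pieces near the branch cuts. The concluding anti-symmetry argument is immediate once the generating set has been pinned down.
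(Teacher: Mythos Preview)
Your approach is correct and genuinely different from the paper's. The paper constructs a basis of periodic domains inductively from innermost components outward, using for non-innermost $Q$ a mix of all curves belonging to $Q$ together with only the $\boldB$-curves of interior components; it then asserts (with a ``Lights Out'' footnote in lieu of argument) that any $\bz$-combination of these still has both signs. You instead exploit the deck involution $\sigma$: once every periodic domain is $\sigma$-anti-invariant, the both-signs conclusion is immediate and requires no analysis of linear combinations. This is cleaner.

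Two small corrections. First, your ``single natural relation'' is not needed: in the doubly-pointed setting the group of periodic domains has rank equal to the number $k$ of components of $\diagram(I)$, and your $k$ domains $\mathcal{P}_Q$ have boundaries supported on disjoint sets of attaching curves, hence are independent and already span. Second, your anticipated obstacle about $\widetilde D_Q^{\pm}$ being well-defined for non-innermost $Q$ is real but resolves as follows: each component of $\diagram(I)$ carries an even number of branch points (two per strand-appearance at a crossing), so the monodromy of $V \to S^2$ around every component is trivial, and the unbranched cover over each region of $S^2 \setminus \diagram(I)$ is two disjoint copies swapped by $\sigma$; choosing these consistently makes $\widetilde D_Q^{\pm}$ honest $2$-chains with $\partial\mathcal{P}_Q$ supported exactly on the curves belonging to $Q$.

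Finally, note that you can bypass the basis entirely. Since $\sigma$ fixes each attaching curve setwise but reverses its orientation, $\partial(\mathcal{P} + \sigma_*\mathcal{P}) = 0$ for any periodic domain $\mathcal{P}$; hence $\mathcal{P} + \sigma_*\mathcal{P}$ has constant multiplicity, and $n_{\basew}(\mathcal{P}) = 0$ forces that constant to be zero. This yields $\sigma_*\mathcal{P} = -\mathcal{P}$ directly, and the paper's inductive construction becomes unnecessary.
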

\begin{proof} It suffices to show that every non-zero periodic domain on $\branched(I)$ has positive and negative coefficients.   Let $Q_1$ be an innermost component (relative to $w$) of $\diagram(I)$.  There are two regions bounded by the curves belonging to this component, and their difference is a periodic domain $P_1$.  Now let $Q_2$ be a component which contains $Q_1$ and does not contain any other component which contains $Q_1$.  There are two regions bounded by the curves belonging to $Q_1$ and the $\boldB$ curves belonging to $Q_0$, and their difference is a periodic domain $P_2$.  Note that the region obtained by instead using the $\boldA$ curves belonging to $C_0$ differs from this one by $\pm P_1$.  It is straightforward to extend this method to the remaining components of $\diagram$.  The result is a linearly independent set $\mathbf{P}$ of primitive periodic domains each corresponding to a closed component of $\diagram(I)$.  Each of these domains has both positive and negative coefficients.  The group of periodic domains on $\branched(I)$ is isomorphic to $H^1((S^1 \times S^2)^n; \bz)$, so $\mathbf{P}$ generates the group.  It is clear that any sum of these domains has both positive and negative coefficients.\footnote{The skeptic is encouraged to play the game \emph{Lights Out} from Tiger Electronics.} \end{proof}

Bouquet diagrams are admissible by a similar argument.  There is an obvious one-to-one correspondence between $\boldB$ curves and $\bet$ curves, and so we may talk about a curve in $\bouquet(I)$ belonging to a component of $\diagram(I)$.

\begin{figure}[h]
\centering
 \includegraphics[width=.66\textwidth]{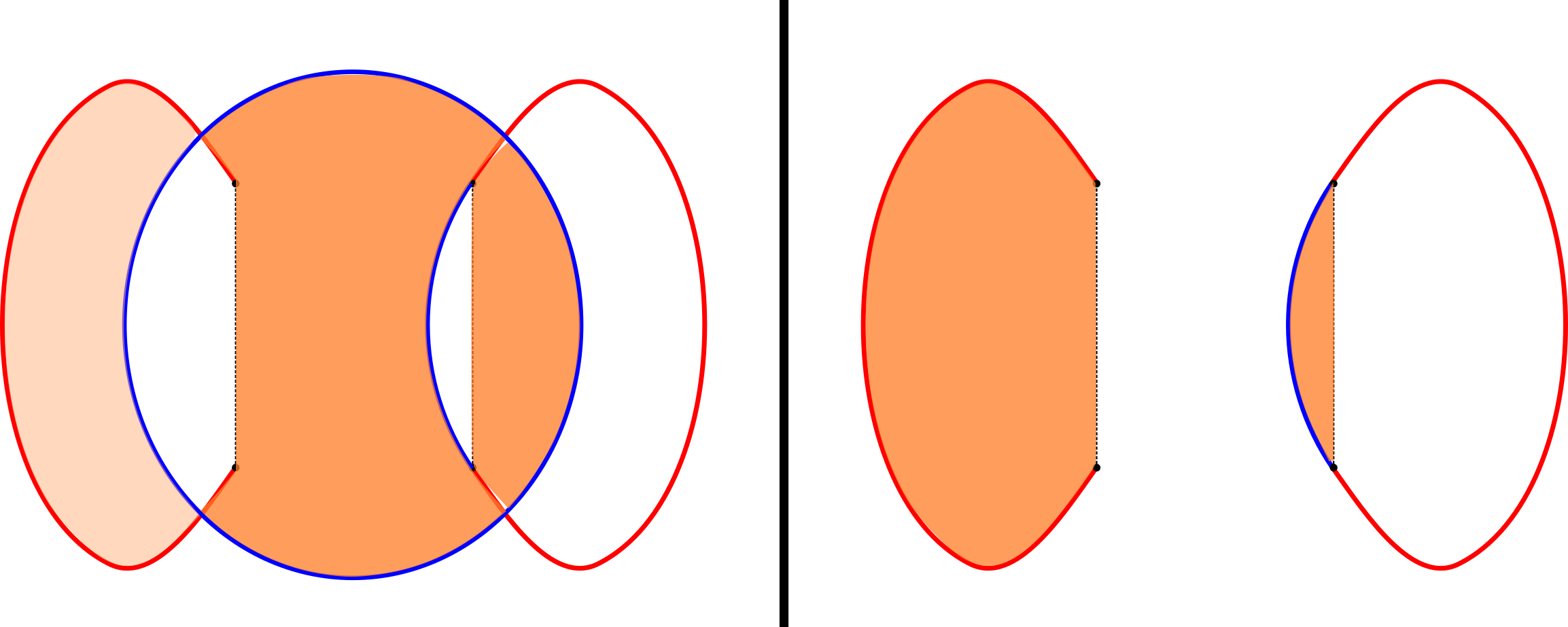}
 \caption{A bouquet diagram for the $0$-resolution of an unknot.  The lightly shaded region is the `obvious' bigon for the left component.  The darker region is topologically an annulus, but it can represent a holomorphic disk into the symmetric product of the Heegaard surface by cutting along a $\boldA$ curve.}
\label{fig:bouquetadmissible}
\end{figure}

\begin{Prop} Bouquet diagrams are weakly admissible. \end{Prop}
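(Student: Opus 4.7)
The plan is to mirror the argument of Proposition~\ref{periodicdomains} almost verbatim, exploiting the fact that the handleslide from $\branched(I)$ to $\bouquet(I)$ at each crossing preserves the combinatorial correspondence between attaching curves and components of $\diagram(I)$. Recall that at each crossing the handleslide replaces one of the two paired $\boldB$-curves by a $C$-curve that encloses the two branch cuts; the other paired curve remains (as an element of $\boldB'(I)$), and all $\boldA$-curves are unchanged. So each closed component $Q$ of $\diagram(I)$ still determines a distinguished set of $\boldA$- and $\bet(I)$-curves belonging to $Q$, and we again want to exhibit one elementary periodic domain per such $Q$.

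First I would construct the periodic domains by the same innermost-to-outermost induction. For an innermost component $Q_1$, the curves belonging to $Q_1$ bound two regions (one on each sphere of $V$); their signed difference is a periodic domain $P_1$. The new feature, compared with the branched case, is that when one of the boundary curves is a $C$-curve rather than a short arc-closure, the corresponding region is an annulus rather than a bigon, as illustrated in Figure~\ref{fig:bouquetadmissible}. This causes no problem: the annulus has well-defined integer multiplicities of $+1$ on one side of the $C$-curve and $0$ on the other, and its signed combination with the region on the opposite sphere is a genuine periodic domain with both positive and negative coefficients. For subsequent components I would follow the branched proof line by line, using regions bounded by the curves belonging to the new component together with the $\bet$-curves (or, equivalently up to $\pm P_1$, the $\boldA$-curves) belonging to already-treated components.

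Next I would verify that the collection $\mathbf{P}$ constructed this way spans the full group of periodic domains. Since $\bouquet(I)$ presents the same three-manifold as $\branched(I)$, namely $S^3(\diagram(I))$, the group of periodic domains still has rank equal to the number of closed components of $\diagram(I)$ (it is identified with $H^1((S^1\times S^2)^n;\bz)$ after the connected-sum splitting described in Remark~\ref{rem:multitosingly} and the preceding discussion). Since $|\mathbf{P}|$ matches this rank and the elements are linearly independent by inspection of their support, they generate. Finally, any nonzero integer combination of elements of $\mathbf{P}$ retains both positive and negative coefficients, by the same region-by-region combinatorial check carried out in the proof of Proposition~\ref{periodicdomains}.

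The main obstacle will be the annular regions: I need to check honestly that the two-chains bounded by curves belonging to an innermost component are still periodic domains after the handleslide (so that their boundary is an integer combination of \emph{entire} $\boldA$- and $\bet$-curves, not partial arcs), and that upon taking integer combinations the cancellation between the bigon-type and annulus-type contributions never wipes out all of one sign. Both points are local to a single crossing, so it suffices to inspect the four local pictures shown in the top and bottom rows of Figure~\ref{fig:nearbybouquet} and track which regions each $C$-curve bounds against each $\boldA$-curve; once that bookkeeping is done, the induction goes through exactly as in the branched case.
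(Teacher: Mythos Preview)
Your proposal is correct and follows essentially the same approach as the paper's own proof: mirror the argument of Proposition~\ref{periodicdomains}, noting that the only new feature is that when a $C$-curve belongs to a component, one of the two bounding regions becomes the more complicated annular region of Figure~\ref{fig:bouquetadmissible}, and the difference of the two regions is still a periodic domain with both positive and negative coefficients. The paper's proof is terser than yours (it leaves the spanning and the sign-under-sums checks implicit by reference to the previous proposition), but the content is the same.
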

\begin{proof}
For each closed component of $\diagram(I)$, there are two regions bounded by curves belonging to that component.  If all the $\bet(I)$ curves belonging to a component are also in $\boldB'(I)$, then the regions are identical to those in the previous lemma.   When a curve in $C$ bounds a region, it is still easy to see it bounds one region.  The complementary region is more complicated; an example is shown in Figure~\ref{fig:bouquetadmissible}.  Again, the difference between these two regions is a periodic domain with positive and negative coefficients.
\end{proof}

Every other Heegaard diagram we consider consists of pairs of parallel curves.  In such a diagram, it is easy to find a basis of periodic domains with positive and negative coefficients and whose sums must have positive and negative coefficients provided that the parallel curves are properly perturbed.

\subsection{Na\"{i}ve correspondence with Khovanov chain group}\label{subsec:minimality}

\begin{Prop}\label{structurelemma}  Let $\branched(I)$ be a branched diagram for the resolution $I$ of a link diagram $\diagram$.   Then $\rank\CF(\branched(I)) = \rank\HF(\branched(I)) = \rank \CKh(m(L(I)))$.  In particular, the differential on $\CF(\branched(I))$ vanishes.  Also, the only $\Spinc$ structure on $\#\S^1 \times S^2$ which is represented by a generator in $\CF(\branched(I))$ is the unique torsion one. \end{Prop}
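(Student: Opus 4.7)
The plan is to pin down $\rank \CF(\branched(I))$ and $\rank \HF(\branched(I))$ by independent means and then conclude that the differential must vanish.

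First, I would set up a decomposition of the Heegaard data by closed components of the resolved diagram $\diagram(I)$. By construction every $\boldA$ curve arises from an ``outside'' arc of $\diagram(I)$ and every $\boldB(I)$ curve from an ``inside'' arc, and each such arc belongs to a unique closed component. Since distinct components of $\diagram(I)$ are disjoint embedded circles in $S^2$, the curves belonging to different components are pairwise disjoint on $V$, and $\boldB$ curves from one component never intersect $\boldA$ curves from another. Hence $\boldA \cap \boldB(I)$ partitions as a disjoint union indexed by components, and each generator of $\CF(\branched(I))$ is assembled by choosing a partial generator on each component independently.

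Next, I would count partial generators per component. For a component $Q$ passing through $j \geq 1$ crossing-circles, the $\boldA$ and $\boldB$ curves belonging to $Q$ form an alternating cyclic ``necklace'' on $V$ in which consecutive curves meet in a single transverse intersection; for a component with no crossings, there is one $\boldA$ curve and one $\boldB$ curve meeting in the two lifts of the endpoints of the dotted arc. In either case the partial generators on $Q$ split into exactly two families (for example, selecting alternating intersections around the necklace), so $Q$ contributes a factor of $2$. If $\diagram(I)$ has $n$ closed components, this gives $\rank \CF(\branched(I)) = 2^n$. Since $m(L(I))$ is an $n$-component unlink as well, $\CKh(m(L(I)))$ has rank $2^n$, matching the Floer generator count.

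Finally, the branched double cover of an $n$-component unlink is $\#^{n-1}(S^1 \times S^2)$, so by Remark~\ref{rem:multitosingly} the doubly-pointed diagram $\branched(I)$ computes
\[
  \HF(\#^{n-1}(S^1 \times S^2)) \otimes \HF(S^1 \times S^2) \cong \HF(\#^n(S^1 \times S^2)),
\]
which has total rank $2^n$, entirely supported in the torsion $\Spinc$ structure. Combined with the chain-level equality $\rank \CF(\branched(I)) = 2^n$, this forces the differential on $\CF(\branched(I))$ to vanish and forces every generator to represent the torsion $\Spinc$ structure. The main obstacle is the combinatorial bookkeeping of the first two paragraphs---identifying the necklace of alternating curves belonging to each component and verifying that the resulting partial-generator count is exactly $2$---but given the explicit local pictures in Figure~\ref{fig:localbranched} and the admissibility analysis of Proposition~\ref{periodicdomains}, this should be a careful but direct check.
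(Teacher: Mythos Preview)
Your proposal is correct and follows essentially the same route as the paper's proof: count chain-level generators as $2^n$ by a per-component analysis, identify the homology as $\HF(\#^n S^1 \times S^2)$ of rank $2^n$, and conclude that the differential vanishes and all generators sit in the torsion $\Spinc$ structure. The paper is terser on the combinatorial step, simply asserting a bijection between generators of $\CF(\branched(I))$ and orientations of the components of $\diagram(I)$; your ``necklace'' description unpacks exactly this bijection, with the two families of partial generators on a component corresponding to its two orientations.
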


\begin{proof} 
The key observation is that there is a bijection between generators of $\CF(\branched(I))$ and orientations of the components of $\diagram(I)$.  Suppose that $\diagram(I)$ has $n$ closed components.  Then $\rank(\CF(\branched(I))) = 2^n$.  Meanwhile, the Heegaard diagram $\branched(I)$ presents $\#^{n-1} S^1 \times S^2$ where $n$ is the number of connected components of the resolved diagram.    So it's Heegaard Floer homology is that of $\#^n S^1 \times S^2$ and therefore it has rank $2^n$.  So the differential on $\CF(\branched(I))$ must vanish.

In~\cite{Ozsvath2004b} it is shown that the Heegaard Floer homology of $\#^n S^1 \times S^2$ is concentrated in the unique torsion $\Spinc$ structure $\fraks_0$.  Each generator must represent a non-trivial homology class, so they must all represent $\fraks_0$.
\end{proof}

For a three-manifold $Y$ and a torsion $\Spinc$ structure $\fraks \in \Spinc(Y)$, the group $\CF(Y,\fraks)$ has an absolute $\bq$-valued grading $\gr$ defined in~\cite{Ozsvath2006}.  We note here only a few properties of $\gr$.  First, for any Heegaard diagram of $S^1 \times S^2$ in which $\CF(S^1 \times S^2)$ has two generators, the generators have gradings $1/2$ and $-1/2$.  The assignment of gradings to intersection points depends crucially on the placement of the basepoint $z$.  Second, there is a K\"{u}nneth formula for Heegaard Floer homology: 	$\CF(Y_1 \# Y_2) \cong \CF(Y_1) \otimes_{\bz/2\bz} \CF(Y_2)$.  The isomorphism is of graded vector spaces.  In other words, the grading of a generator in $\CF(Y_1 \# Y_2)$ is the sum of gradings of the corresponding generators in $\CF(Y_1)$ and $\CF(Y_2)$.

Khovanov homology is a bigraded theory.  For a generator $x$ in a group corresponding to the resolution $I$, the \emph{homological grading} is given by $h(x) = |I| - n_-$.  Here $|I|$ is the sum of the entries of $I$, sometimes called the \emph{weight} of $I$, and $n_-$ is the number of negative crossings in $\diagram$.  The second grading is called the \emph{quantum grading} $q(x)$.  It is defined as follows: let $\tilde{q}(v_+) = -\tilde{q}(v_-) = 1$, and extend $\tilde{q}$ additively to simple tensors.  Then $q$ is given by 
\[
q(x) = \tilde{q}(x) + h(x) + n_+ - 2n_-
\]
where $n_+$ is the number of positive crossings in $\diagram$.

Let $h': X' \to \bz$ be defined identically to $h$.  Let $\tilde{q}' = 2 \gr$, and define $q':X' \to X'$ by
\[
q'(x) = \tilde{q}'(x) + h'(x) + n_+ - 2n_-.
\]

\begin{Prop}\label{prop:firstIsomorphism}  The correspondence in the proof of Proposition~\ref{structurelemma} is a graded isomorphism of the vector spaces $X'(\diagram)$ with bigrading $(h',q')$ and $\CKh(\diagram)$ with brigading $(h,q)$.\end{Prop}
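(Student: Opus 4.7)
The plan is to verify that the bijection from the proof of Proposition~\ref{structurelemma} respects both pieces of bidegree. There is nothing to check for the underlying vector-space isomorphism beyond what is already noted: generators of $\CF(\branched(I))$ correspond to orientations of the components of $\diagram(I)$, and these are identified with Khovanov generators via the standard $v_\pm \leftrightarrow$ orientation rule on each circle. The correspondence is summand-by-summand in $I$, so $h' = h$ holds tautologically on the nose, both being $|I| - n_-$ on the summand indexed by $I$. Cancelling the common shift $h(x) + n_+ - 2n_-$ then reduces the proposition to the identity $2\gr(x) = \tilde{q}(x)$ on each generator $x$ coming from an orientation of $\diagram(I)$.

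Next I would K\"{u}nneth-split the calculation into one circle at a time. Converting the doubly-pointed diagram $\branched(I)$ to a singly-pointed diagram as in Remark~\ref{rem:multitosingly} gives a Heegaard diagram for $\#^{n}(S^1 \times S^2)$, where $n$ is the number of components of $\diagram(I)$. After an isotopy of the branch cuts, this diagram decomposes as a connected sum of $n$ standard genus-one diagrams for $S^1 \times S^2$, one per component, each with its two generators in absolute Maslov grading $\pm \tfrac{1}{2}$. The Heegaard Floer K\"{u}nneth formula recalled in the excerpt makes $\gr$ additive under this splitting, and $\tilde{q}$ is additive by definition on simple tensors. So it suffices to prove $2\gr = \tilde{q}$ on a single $S^1 \times S^2$ factor.

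For one circle I would identify which of the two intersection points on its $\boldA$/$\boldB$ pair has $\gr = +\tfrac{1}{2}$, and compare this with the Khovanov convention labeling one of the two orientations of the circle by $v_+$. Concretely, the component bounds two regions in $V$; the one containing the basepoint $w_j$ on the appropriate side of the branch cut pins down the sign of the Maslov grading, and one checks that this geometric side matches the side picked out by the standard Khovanov orientation rule. For an innermost component this is visible in a single local picture, and for nested components one argues inductively on the depth of the component inside $\diagram(I)$, using that the splitting into genus-one factors is compatible with the nesting.

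The main obstacle I anticipate is precisely this last step: carefully matching the sign of the absolute $\gr$ grading (which is sensitive to the placement of $\basew$ and to the orientation conventions chosen in \cite{Ozsvath2006}) with the sign coming from the Khovanov convention on $v_\pm$. Once this is calibrated for a single circle in a model branched diagram, the K\"{u}nneth reduction and the tautological equality $h' = h$ assemble the pieces into a graded isomorphism on all of $X'(\diagram)$.
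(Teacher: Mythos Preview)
Your approach is essentially the paper's: both reduce the question to $2\gr = \tilde{q}$ after the tautological identification $h = h'$, and both split the computation circle-by-circle via the K\"{u}nneth formula for $\#^n(S^1 \times S^2)$. The difference is that the ``main obstacle'' you anticipate does not actually arise. There is no pre-existing ``standard $v_\pm \leftrightarrow$ orientation'' convention that must be matched: the proof of Proposition~\ref{structurelemma} only supplies a bijection between Heegaard Floer generators and orientations of the circles, not a further identification with Khovanov generators. The paper therefore simply \emph{defines} $g_I$ on each circle by sending the orientation with higher Maslov grading to $v_+$ and the other to $v_-$; with that choice $2\gr = \tilde{q}$ holds on a single factor by fiat, and K\"{u}nneth additivity handles the rest. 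Your inductive calibration on nesting depth and the worry about sensitivity to the placement of $\basew$ are unnecessary once you realize you are free to make this choice.
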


\begin{proof}
We saw that generators of $\CF(\branched(I))$ are in one-to-one correspondence with orientations of the components of $\diagram(I)$.  The discussion above allows us to think of oriented component as having it's own Maslov grading.  The map $g_I \co \CF(\branched(I)) \to \CKh(\diagram(I))$ defined on canonical generators by sending higher Maslov-graded orientations to $v_+$ and lower-graded orientations to $v_-$ is clearly a $q$-graded isomorphism.  The sum $\bigoplus_I g_I$ obviously respects the homological gradings $h$ and $h'$.
\end{proof}

\section{Handleslides}\label{sec:handleslides}

For a link diagram $\diagram$ with $c$ crossings we have two groups:
\begin{align*}
X &= \bigoplus_{I \in \{0,1\}^c} \widehat{CF}(\mathpzc{Bo}(I))\\
X' &= \bigoplus_{I \in \{0,1\}^c} \widehat{CF}(\mathpzc{Br}(I)).
\end{align*}
The group $X$ is equipped with a filtered differential $D$.  The group $X'$ is equipped with a filtered map $D'$, defined identically \emph{mutatis mutandis}.  In this section we study the maps induced by the handleslides which transform $\mathpzc{Bo}(I)$ to $\mathpzc{Br}(I)$.

\begin{figure}[h]
\centering
 \includegraphics[width=.66\textwidth]{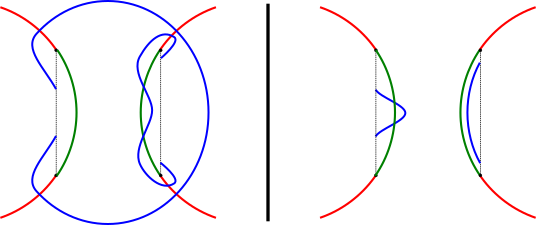}
 \caption{A branched multi-diagram representing a handleslide at a $0$-resolved crossing.}\label{fig:localbranchedhandleslide}
\end{figure}

To start, we order the crossings of $\diagram$ and examine the map induced by the handleslides near the first crossing.  As in~\cite{Roberts2008}, we are justified in drawing and sliding parallel curves together.  Figure~\ref{fig:localbranchedhandleslide} shows a Heegaard multi-diagram realizing a handleslide at a $0$-resolved crossing.  Call the handleslid curves $\boldB_1(I)$.  The local picture at a $1$-resolved crossing is similar.  At all other crossings of the multi-diagram, $\bet(I)$ and $\boldB_1(I)$ curves are parallel.  For any resolution $I$ the diagram $(V, \bet(I), \boldB_1(I))$ presents a connected sum of $S^1\times S^2$s and $\widehat{CF}(\bet(I), \boldB_1(I))$ has a highest degree generator $\Theta$.

Write $X_1$ for the sum of Heegaard Floer chain groups of the diagrams after a handleslide, i.e.\ using $\boldA$ and $\boldB_1(I)$.   We may view $X \oplus X_1$ as a $(c+1)$-dimensional cubical complex in which the last coordinate specifies whether to use curves from $\bouquet(I)$ (for $0$) or $\branched(I)$ (for $1$) at the first crossing.  For every path $\mathbf{I} = \{I_1, \ldots, I_n\}$ in the enlarged cube whose last coordinate changes define  $\psi_\mathbf{I} \co X \to X'$ by
\[
	\psi_\mathbf{I}(x) = f(x \otimes \Theta_1 \otimes \cdots \Theta_{n-1}).
\]
Here $\Theta_i$ is the highest graded generator in some diagram representing a connected sum of $S^1\times S^2$s.  Let $\psi_1 = \sum \psi_\mathbf{I}$ where the sum is taken over paths whose last coordinate changes.  Define maps $D_0$ and $D_1$ by
\begin{align*}
	D_0 &= \sum_{J, J' \in \{0,1\}^k \times \{0\}} d_{J,J'} \\
	D_1 &= \sum_{J, J' \in \{0,1\}^k \times \{1\}} d_{J,J'}
\end{align*}
\begin{Prop}\label{prop:equivariant}
The map $\psi_1$ satisfies $\psi_1 \circ D_0 = D_1 \circ \psi_1$.
\end{Prop}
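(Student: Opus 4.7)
The plan is the standard Heegaard-Floer polygon-counting argument: extract the chain-map identity from the fact that the total mod-2 count of boundary ends of a one-dimensional moduli space of holomorphic polygons vanishes. Over $\bF$, the identity $\psi_1 \circ D_0 = D_1 \circ \psi_1$ is equivalent to $\psi_1 \circ D_0 + D_1 \circ \psi_1 = 0$, which is precisely the shape of such a boundary count.

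First I would fix notation. For each path $\mathbf{I} = \{I_1, \ldots, I_n\}$ in the enlarged cube whose last coordinate changes (necessarily exactly once), the associated Heegaard multi-diagram is built from $\bal$ together with $\bet(I_i)$ at vertices with last coordinate $0$, $\boldB_1(I_j)$ at vertices with last coordinate $1$, and the pair $(\bet(I_\star), \boldB_1(I_\star))$ at the unique handleslide step. Each intermediate ``bridge'' sub-diagram (pure $\bet\bet$, pure $\boldB_1\boldB_1$, or mixed) is a small perturbation of a pair of isotopic curve systems and, by inspection of Figure~\ref{fig:localbranchedhandleslide}, presents $\#^m(S^1 \times S^2)$ for the appropriate $m$; in each such diagram the distinguished top-graded cycle $\Theta$ is well-defined. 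The map $\psi_\mathbf{I}$ counts Maslov-index-zero $(n+1)$-gons with inputs $x \otimes \Theta_1 \otimes \cdots \otimes \Theta_{n-1}$, and $\psi_1 = \sum_\mathbf{I} \psi_\mathbf{I}$.

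Next I would study the compactified Maslov-index-one $(n+1)$-gon moduli spaces for each multi-diagram. By the admissibility verification of Section~\ref{subsec:admissibility} and standard Gromov compactness, each is a compact one-manifold with boundary, so its boundary has an even number of points. The degenerations fall into five types: (a) a Maslov-index-one bigon at the $\bal$-boundary of the input, contributing to $\psi_1 \circ D_0$; (b) a Maslov-index-one bigon at the $\bal$-boundary of the output, contributing to $D_1 \circ \psi_1$; (c) a Maslov-index-one bigon at an intermediate bridge side, which vanishes because each $\Theta_i$ is a cycle; (d) an intermediate triangle pinches off combining consecutive $\Theta_{i-1} \otimes \Theta_i$; (e) a higher polygon pinches off entirely among the right-hand strings. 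Summing modulo two over all qualifying paths, contributions of types (d) and (e) assemble into terms indexed by shorter paths $\mathbf{J}$ and cancel in pairs against the corresponding degenerations arising from those shorter paths' own one-dimensional moduli spaces.

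The main obstacle is the verification underpinning types (d) and (e): the triangle and higher polygon maps on the bridge multi-diagrams must send products of top-graded cycles to the top-graded cycle. This is the nearly-symmetric diagram calculation of~\cite{Ozsvath2004b}, but because our enlarged cube mixes $\bet$ and $\boldB_1$ curves we must check it in three flavors of triple diagram (pure $\bet$, pure $\boldB_1$, and hybrid). In each case the local picture at every crossing is a small perturbation of parallel copies, or of a handleslide between parallel copies as in Figure~\ref{fig:localbranchedhandleslide}, so after admissibility the multi-diagram is a standard diagram for $\#^m(S^1 \times S^2)$ and the relation $\Theta \otimes \Theta \mapsto \Theta$ (together with the vanishing of higher $A_\infty$ operations on top-graded cycles) follows as in~\cite{Ozsvath2005b, Ozsvath2004b}. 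Granting this, the boundary-count identity collapses to $\psi_1 \circ D_0 + D_1 \circ \psi_1 = 0$.
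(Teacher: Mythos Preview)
Your degeneration catalogue is incomplete, and this is a genuine gap. Your cases (a) and (b) only capture \emph{bigons} bubbling off at the $\bal$-side, but $D_0$ and $D_1$ are not just the internal Heegaard Floer differentials: they include the polygon maps $d_{J,J'}$ along all paths in $\{0,1\}^k \times \{0\}$ and $\{0,1\}^k \times \{1\}$. The corresponding ends of the Maslov-one moduli space are breaks at a new vertex in $\bal \cap L_j$ for $2 \leq j \leq n-1$, producing \emph{two} polygons each touching $\bal$: one is counted by a higher-polygon component of $D_0$ (or $D_1$), the other by a component of $\psi_1$. These are not in your list (your (d) and (e) are explicitly ``entirely among the right-hand strings''), so as written you only establish that $\psi_1$ intertwines the internal differentials, not the full $D_0$ and $D_1$. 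The paper's organization---chord touching $\bal$ versus not, and in the latter case whether it separates $\bal$ from the handleslide vertex $v$---captures exactly these missing terms.

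Your cancellation mechanism for (d) and (e) is also off. The statement you need is a \emph{cancellation lemma}: for fixed $I < I'$, the sum over all paths $\mathbf{J}$ from $I$ to $I'$ of $f_\mathbf{J}(\Theta \otimes \cdots \otimes \Theta)$ vanishes. Your phrasing (``cancel in pairs against \ldots shorter paths' own one-dimensional moduli spaces'') is not this; the cancellation is between different paths of the same length with the same endpoints, not against shorter paths. More seriously, your justification that the hybrid triples are ``standard diagrams for $\#^m(S^1 \times S^2)$'' so that $\Theta \otimes \Theta \mapsto \Theta$ follows from \cite{Ozsvath2004b} is false: at the handleslide crossing the curves are \emph{not} small perturbations of one another, so the nearly-symmetric computation does not apply. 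The paper treats this case (the chord separating $\bal$ from $v$) with a separate lemma, whose $n=2$ case is an explicit triangle count in the genuinely non-standard hybrid diagram (Figure~\ref{fig:branchedcancellation}); only after identifying that unique triangle for each of the two paths does the sum $\Theta + \Theta = 0$ give the cancellation.
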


\begin{proof}
Let $\phi \in \pi_2(\mathbf{x}, \Theta_1, \ldots, \Theta_{n-1})$ with $\mu(\phi) = 1$.  Write $\mathcal{M}(\phi)$ for the space of holomorphic representatives of $\phi$.  This space has a compactification whose ends are pairs $(\phi_1, \phi_2)$ so that $\phi_1 \star \phi_2 = \phi$ and $\mu(\phi_1) = \mu(\phi_2) = 0$ where $\star$ is the concatenation operation.  More concretely, the ends are ``broken polygons'' joined at a vertex.  Each of these corresponds to a holomorphic polygon with $\mu = 1$ and a degenerating chord, as shown in Figure~\ref{fig:degeneration}.  Compactness of $\mathcal{M}(\phi)$ implies that the sum of the ends must be even.  In this schematic, each edge of the polygon is mapped to a set of attaching curves (really, to a torus in a certain symmetric product) and we will abuse notation by identifying the side of the polygon with the curves.

\begin{figure}[h]
\centering
 \includegraphics[width=.66\textwidth]{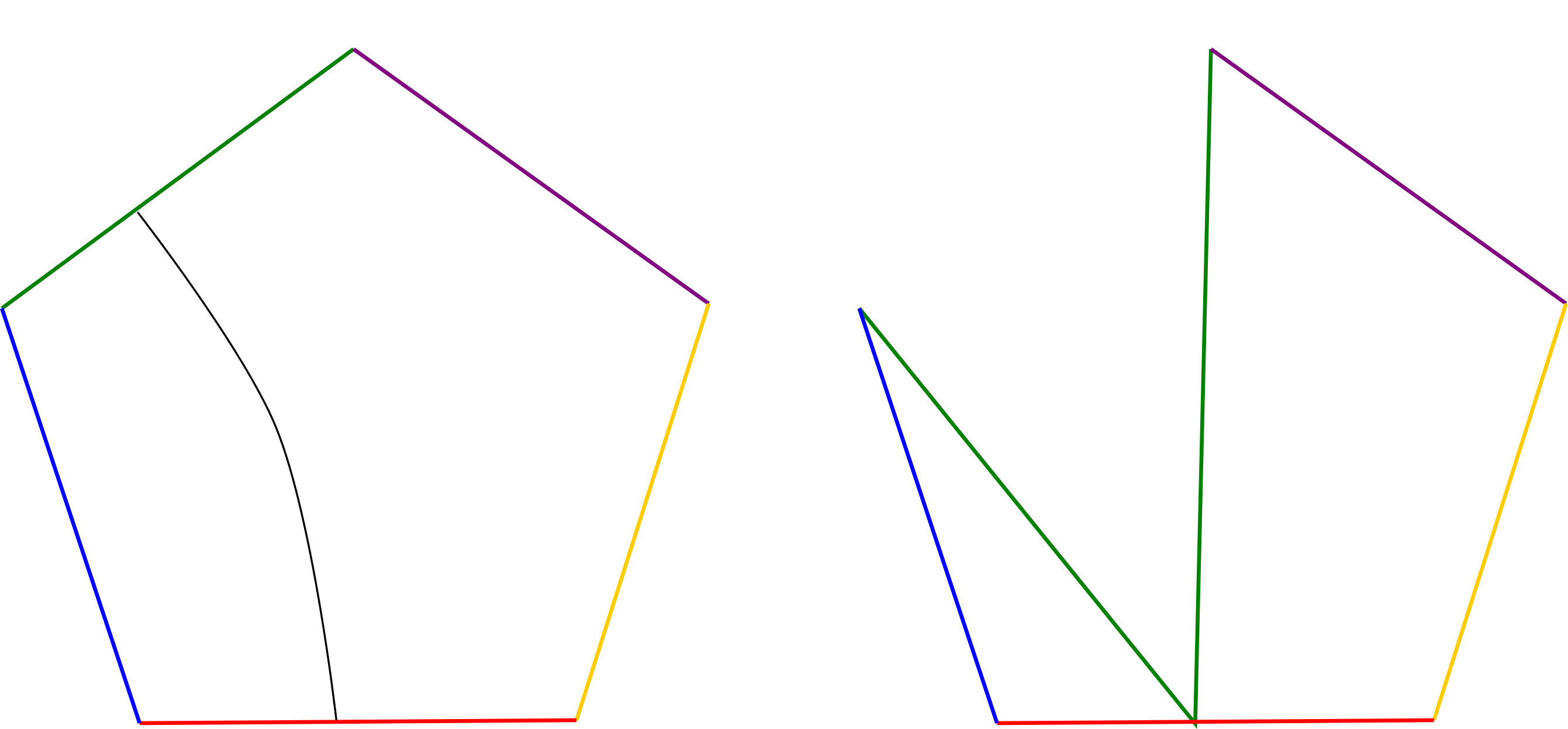}
 \caption{A holomorphic pentagon degenerates along a chord into a holomorphic triangle and rectangle.  Supposing that the vertex $v$ is where the green and purple edges meet, this degeneration is counted by $\psi_1 \circ D_0$}
\label{fig:degeneration}
 \end{figure}

Each polygon counted by $\psi_1$ has a unique vertex $v$ joining a $\bet$ edge and a $\boldB_1$ edge which represents the handleslide. The character of a degeneration is determined by the position of the degenerating chord relative to $v$ and to the $\boldA$ edge.  Suppose the chord has an endpoint on the $\boldA$ edge.  The other endpoint must be either to the left or to the right of $v$, so in the degeneration $v$ ends up in one polygon or the other.  The polygon with $v$ is counted by $\psi_1$.  The other polygon is counted by either $D_0$ or $D_1$.   Summing over all degenerations, every concatenation counted by $\phi_1 \circ D_0$ and $D_1 \circ \psi_1$ appears exactly once.  So degenerations along a chord touching the $\boldA$ edge contribute $\psi_1 \circ D_0 + D_1 \circ \psi_1$ to sum of boundary components.

Suppose that the degenerating chord does not have an endpoint on $\boldA$ and also that it does not separate $\boldA$ from $v$.  After degenerating, one polygon is made entirely of either $\bet$ or $\boldB_1$ curves, so the corresponding polygon must connect standard generators.  The cancellation lemma, Lemma 4.5 of~\cite{Ozsvath2005b} and Lemma 7 of~\cite{Roberts2008}, shows that the maps counting such polygons sum to zero.  The following lemma show the same in the case in which the degenerating chord separates $v$ from $\boldA$.  This completes the proof, as it follows $\psi_1 \circ D + D_1 \circ \psi_1$ is equal to the sum of all the boundary components, and therefore to zero in $\bf$.   \end{proof}
\begin{Lemma}
\label{cancellationlemma}
Let $I, I'$ be elements of $\{0,1\}^k \times \{0,1\}$ with differing last coordinates so that $I < I'$.  Then for any $n \leq k$, 
\[
	\sum_{\text{$\mathbf{J} \in \mathcal{P}(I,I')$}} f_\mathbf{J}(\Theta_1 \otimes \cdots \otimes \Theta_{n}) = 0.
\]
\end{Lemma}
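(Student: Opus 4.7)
The plan is to extend the cancellation lemma of Ozsv\'ath--Szab\'o (Lemma~4.5 of \cite{Ozsvath2005b}) and Roberts (Lemma~7 of \cite{Roberts2008}) so that it applies to the multi-diagrams that incorporate the handleslide curves $\boldB_1(J)$.  Those results give the analogous vanishing for standard bouquet multi-diagrams built entirely from parallel-perturbed attaching circles, and the job is to verify that the presence of the single handleslide step does not disrupt the argument.

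First I would check that for each path $\mathbf{J} = \{J_1,\ldots,J_m\} \in \mathcal{P}(I,I')$, the multi-diagram obtained by forgetting $\boldA$ and retaining the $\bet$- or $\boldB_1$-type curve sets along $\mathbf{J}$ is weakly admissible and presents $\#^{2c}(S^1 \times S^2)$.  At each of the first $k$ crossings, consecutive curve sets look locally like either parallel perturbed copies or a meridian/longitude pair, just as in a bouquet multi-diagram.  At the single step of $\mathbf{J}$ where the last coordinate flips, the two curve sets differ by the handleslide replacing $\bet$ by $\boldB_1$, which is still a local move on a single $T^2$ factor.  In every case each pairwise Heegaard diagram has a distinguished top generator $\Theta_i$, and the polygon maps counted by $f_\mathbf{J}(\Theta_1 \otimes \cdots \otimes \Theta_n)$ are of the same type treated by the standard cancellation lemma.

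Next I would run the compactness argument: the sum $\sum_\mathbf{J} f_\mathbf{J}(\Theta_1 \otimes \cdots \otimes \Theta_n)$ is identified with the count of certain boundary ends of a one-dimensional moduli space of $\mu=1$ holomorphic polygons in the union multi-diagram whose corners are all at top generators.  Every other kind of boundary end comes from a chord degeneration that does not split off the handleslide vertex, and those contributions vanish separately by the standard parallel-curve cancellation of \cite{Ozsvath2005b}.  Compactness of the $1$-dimensional moduli space then forces the desired sum to vanish in $\bF$.

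The main obstacle is controlling the local polygon contribution at the handleslide crossing: one must verify that it sends $\Theta$ to $\Theta$ without producing extra terms that escape the cancellation pattern from the parallel-curve case.  This should reduce to the standard handleslide triangle count, which shows that handleslides induce quasi-isomorphisms on $\widehat{CF}$ preserving the top generator, possibly combined with an induction on $n$ using the $A_\infty$ relations among polygon maps in the enlarged multi-diagram.
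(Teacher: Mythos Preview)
Your high-level plan—extend the Ozsv\'ath--Szab\'o/Roberts cancellation to multi-diagrams containing the handleslide step—is exactly what the paper does, but your proposed mechanism is not.  The paper does \emph{not} prove the lemma by a compactness argument on a $\mu=1$ moduli space.  Instead it follows the original Ozsv\'ath--Szab\'o proof directly: for $n>2$ it shows that every Whitney $(n+1)$-gon between top generators has strictly positive Maslov index, so each term $f_{\mathbf{J}}(\Theta_1\otimes\cdots\otimes\Theta_n)$ vanishes individually; for $n=1$ it is just $d\Theta=0$; and for $n=2$ it performs an explicit local triangle count.  The $n>2$ argument proceeds by writing any such polygon as a splice of $\mu=0$ triangles of three types (all-$\bet$, all-$\boldB_1$, and the single mixed triangle carrying the handleslide vertex), then observing that each splice raises the Maslov index by one.

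Your compactness outline has a genuine gap.  You never identify the moduli space whose ends would produce $\sum_{\mathbf{J}} f_{\mathbf{J}}(\Theta^{\otimes n})$: the sum is over \emph{different paths}, hence over different ordered multi-diagrams, and there is no single $\mu=1$ polygon space whose boundary packages them together.  The $A_\infty$ relation for a fixed path relates that path's polygon map to its own sub-polygons, not to maps along other paths, so an induction on $n$ along the lines you suggest does not close up.  More importantly, even granting a reduction for large $n$, the substantive case is $n=2$: there are exactly two paths (handleslide then resolution change, or the reverse), and one must exhibit the unique index-zero triangle for each and check that they land on the same top generator, so that $1+1=0$ in $\bF$.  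This is a direct local computation in the diagram at the affected crossing (the paper's Figure~\ref{fig:branchedcancellation}), and it is \emph{not} the same statement as ``the handleslide triangle sends $\Theta$ to $\Theta$''—that identity concerns a single triple, whereas here two distinct triples must be compared.  Your proposal does not address this computation.
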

\begin{proof}
Our argument closely follows those in~\cite{Ozsvath2005b} and~\cite{Roberts2008}.  We start with the case that $n > 2$.  We will show that any element of $\pi_2(\Theta_1, \ldots, \Theta_n)$ has positive Maslov index and therefore cannot contribute to the sum.

One may construct a polygon counted in the sum by splicing together several triangles as in Figure~\ref{fig:concatenating}.  There are three sorts of constituent triangles:
\begin{enumerate} \item those of the form $(\bet(J_0), \bet(J_i), \bet(J_{i+1}))$
 \item those of the form $(\bet(J_0), \boldB_1(J_j), \boldB_1(J_{j+1}))$
 \item a single triangle of the form $(\bet(J_0), \bet(J_k), \boldB_1(J_{k+1}))$.
\end{enumerate}

\begin{figure}[h]
\includegraphics[width=.66\textwidth]{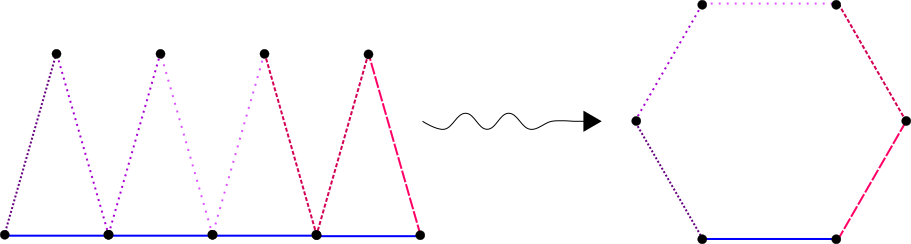}
\caption{Concatenating four triangles and un-degenerating them to obtain a hexagon.  The blue and purple curves are type $\bet$ and the red curves are type $\boldB_1$.}
\label{fig:concatenating}
\end{figure}

The third type contains the vertex $v$.  Every triple of the first type presents a surgery cobordism between connected sums of $S^1 \times S^2$s.  Any contributing triangle must have Maslov index $0$, so the corresponding map of Floer homologies sends the highest degree generator in $\widehat{CF}(\bet(J_0), \bet(J_i)) \otimes \widehat{CF}(\bet(J_i), \bet(J_{i+1}))$ to the highest degree generator in $\widehat{CF}(\bet(J^0), \bet(J^{i+1}))$.  (See Lemma 3 in~\cite{Roberts2008}.)  To obtain an honest polygon we must `undo' the degenerations, increasing the Maslov index by $1$ for each splicing.  As all the $\Theta$s live in torsion $\Spinc$ structures, the addition of a periodic domain to this polygon does not affect its Maslov index.  Recall that any two domains representing the homotopy class of a Whitney polygon which might be counted in the differential differ by a periodic domain.  It follows that \emph{every} polygon between these generators has Maslov index greater than $0$.

The case $n = 1$ is equivalent to the fact that $\Theta$ is a cycle.  In the case $n = 2$, the map $f_\mathbf{J}$ counts certain triangles in which one vertex is $v$.  These correspond to changes of resolution of the form $(0, \ldots; 0) \to (1, \ldots; 0) \to (1, \ldots; 1)$ or $(0, \ldots; 0) \to (0, \ldots; 1) \to (1, \ldots; 1)$.  In each case there is a unique triangle connecting highest degree generators, see Figure~\ref{fig:branchedcancellation}.  The triangles shown have Maslov index $0$ (see, for example, the main result of~\cite{Sarkar2011}).  An analysis of periodic domains shows that there are no other triangles between the highest degree generators.  The remaining curves in the diagram come in parallel triples.
\end{proof}

\begin{figure}[h]
\centering
\begin{subfigure}[h]{.4\textwidth}
 \includegraphics[scale=.5]{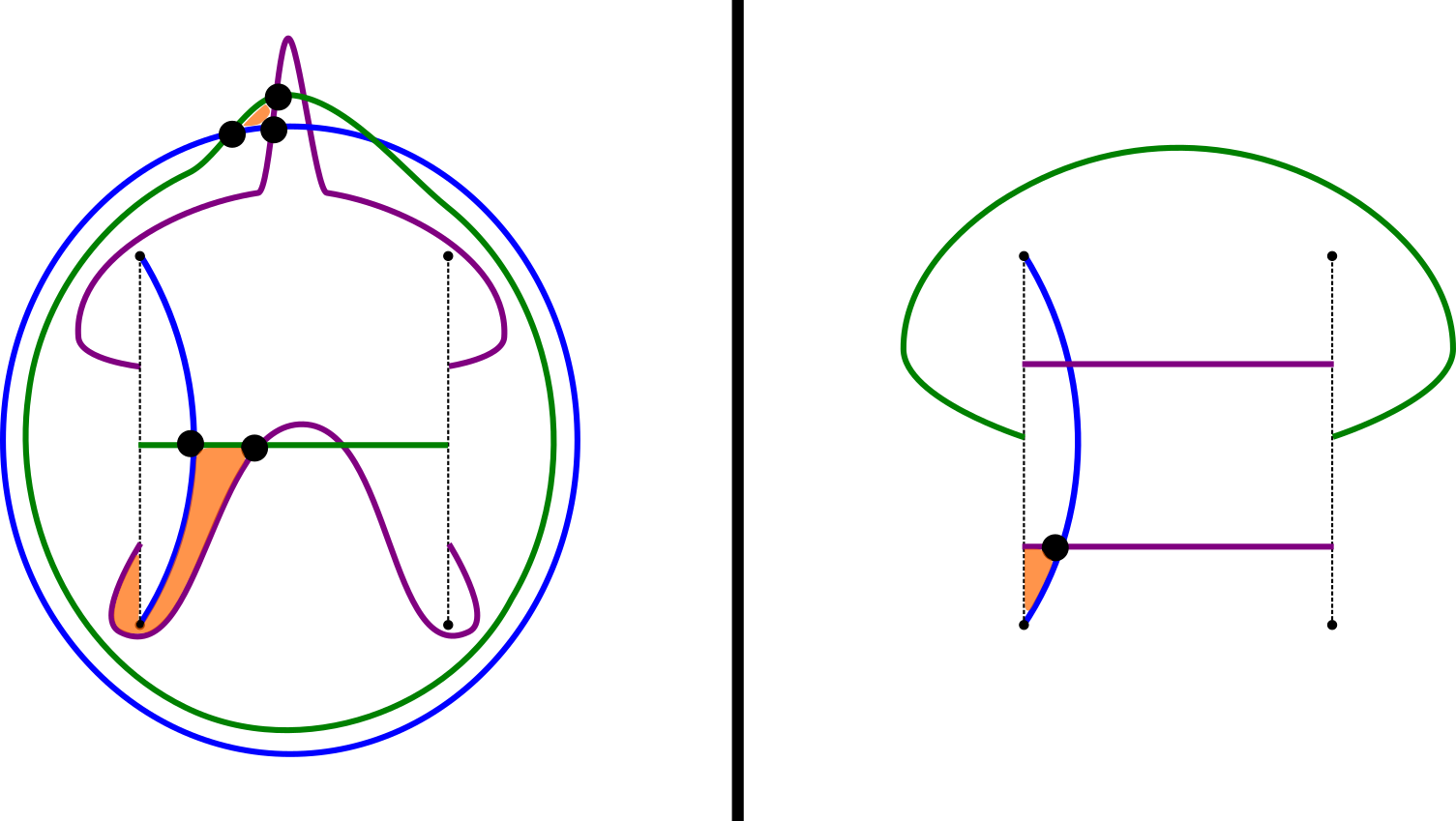}
\end{subfigure}
\hfill
\begin{subfigure}[h]{.4\textwidth}
\includegraphics[scale=.5]{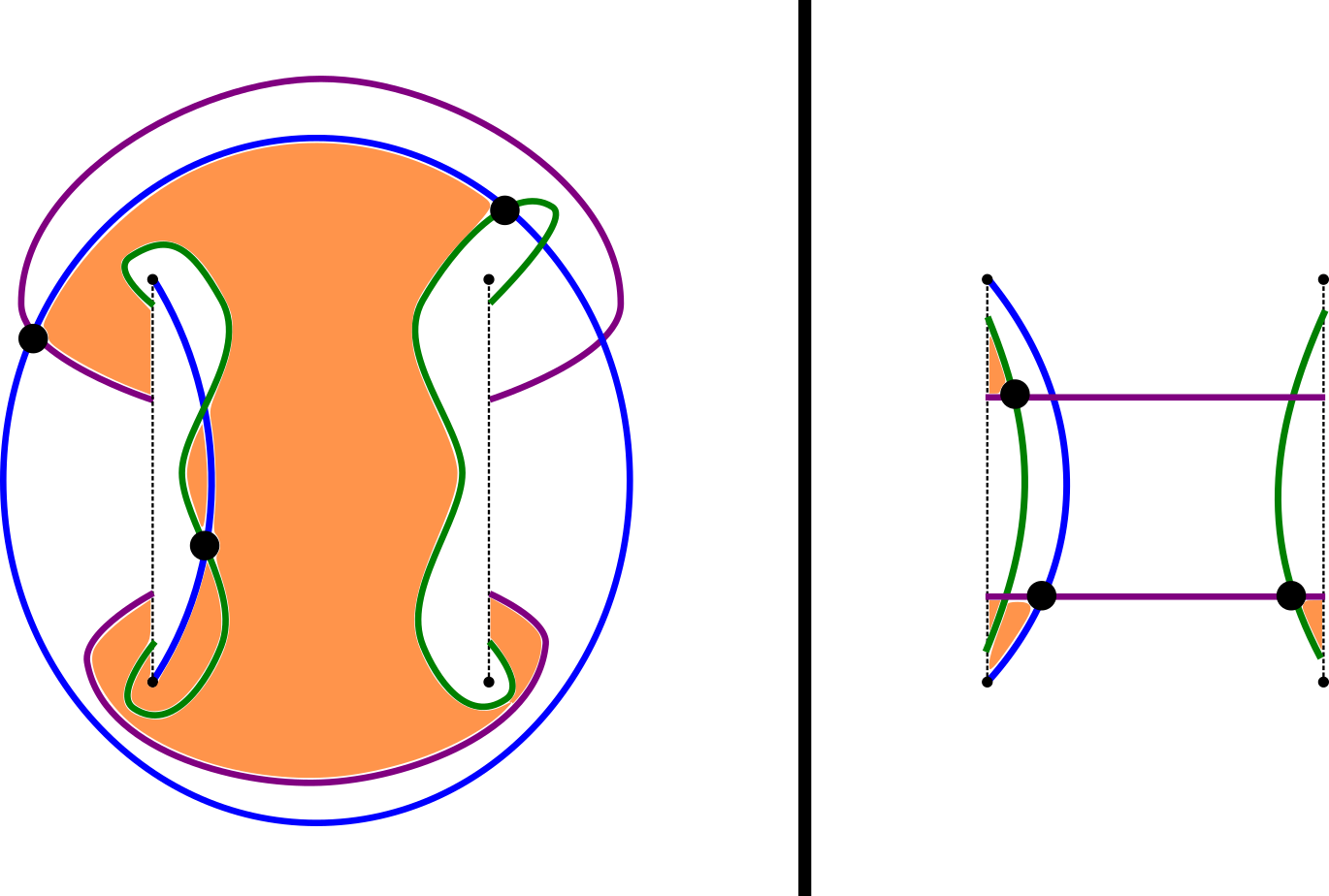}
\end{subfigure}
\caption{The $\boldB$ curves for the paths $(0, \ldots; 0) \to (1, \ldots; 0) \to (1, \ldots; 1)$ (left) and $(0, \ldots; 0) \to (0, \ldots; 1) \to (1, \ldots; 1)$ (right).  The intersection points constituting the highest degree generators are marked with circles. The shaded region is the region of the unique holomorphic triangle connecting the generators.}
\label{fig:branchedcancellation}
\end{figure}

This argument is essentially local, so it works just as well if we change a second crossing after changing the first and so on.  Let $X_i$ be the group obtained by doing handleslides at the first $i$ crossings.  Let $D_i$ be the putative differential on $X_i$, and let $\psi_i: X_i \to X_{i+1}$ be the map induced by the next handleslide.  Let $\Psi \co X \to X'$ be the map 
\[
	\Psi = \psi_n \circ \cdots \circ \psi_1.
\]
Write $\phi_i: X_i \to X_{i-1}$ for maps defined identically to $\psi_i$ but induced by the reverse handleslide.  Define $\Phi \co X' \to X$ by 
\[
	\Phi = \phi_1 \circ \cdots \circ \phi_n.
\]

\begin{Lemma} The pair $(X', D')$ is a chain complex.\end{Lemma}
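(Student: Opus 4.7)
The plan is to use the handleslide chain map $\Psi \co X \to X'$ to transport the identity $D^2 = 0$, established in Section \ref{sec:ozssz}, over to $(D')^2 = 0$. First, I would iterate Proposition \ref{prop:equivariant} to upgrade its conclusion to $\Psi \circ D = D' \circ \Psi$. The proof of Proposition \ref{prop:equivariant} is entirely local: it tallies boundary degenerations of moduli spaces of holomorphic polygons, invokes admissibility (which extends from Proposition \ref{periodicdomains} together with the parallel-curve remark at the end of Section \ref{subsec:admissibility}), and applies Lemma \ref{cancellationlemma} to dispose of the degenerations whose broken factor is a Theta-only polygon. Crucially, none of these inputs requires $D_{i-1}^2 = 0$, so the same proof gives $\psi_{i+1} \circ D_i = D_{i+1} \circ \psi_{i+1}$ at every stage, and composing the $\psi_i$ in order yields the desired intertwining.

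Second, a purely algebraic manipulation gives
\[
(D')^2 \circ \Psi \;=\; D' \circ (D' \circ \Psi) \;=\; D' \circ \Psi \circ D \;=\; \Psi \circ D^2 \;=\; 0,
\]
so $\im(\Psi) \subseteq \ker\bigl((D')^2\bigr)$. To promote this inclusion to $(D')^2 = 0$ on all of $X'$, I would exhibit a right inverse for $\Psi$. The reverse-handleslide map $\Phi \co X' \to X$ satisfies $\Phi \circ D' = D \circ \Phi$ by the symmetric iteration. The composition $\Psi \circ \Phi$ is then a sum of polygon counts in a multi-diagram containing both the $\bet$ and $\boldB_1$ curve systems. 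The plan is to show that its leading contribution, coming from a canonical family of small triangles between paired curves, is the identity on $X'$, while every other contribution strictly lowers a natural area-based filtration. This would make $\Psi \circ \Phi$ unipotent and hence invertible, so $\Psi$ is surjective and $(D')^2 \circ \Psi = 0$ forces $(D')^2 = 0$.

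The main obstacle is that last step. Standard handleslide invariance supplies only a homology-level chain equivalence, whereas I need the stronger chain-level unipotence of $\Psi \circ \Phi$. Establishing this requires a direct inspection of the multi-diagrams containing both the $\bet$ and $\boldB_1$ curves, in the spirit of the cancellation arguments from \cite{Ozsvath2005b} and \cite{Roberts2008}: one must identify the canonical small-triangle system that contributes the identity and organize the remaining polygons by a filtration that they all strictly decrease. By contrast, the first two steps are straightforward bookkeeping on the tools already set up in this section.
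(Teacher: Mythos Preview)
Your overall architecture---iterate Proposition~\ref{prop:equivariant} to obtain an intertwining relation, then use it to transport $D^2=0$ to $(D')^2=0$---matches the paper's, and your observation that the iteration needs no inductive hypothesis $D_i^2=0$ is correct. But you have made the last step harder than necessary, and the obstacle you flag is precisely where the paper takes a shortcut you are missing.

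The paper works with $\Phi$ rather than $\Psi$: from $\Phi \circ (D')^2 = D^2 \circ \Phi = 0$ it suffices to show $\Phi$ is injective. The filtration-preserving part $\Phi^0$ is a direct sum of handleslide maps $\Phi^0_I \co \CF(\branched(I)) \to \CF(\bouquet(I))$, each a quasi-isomorphism by topological invariance of $\HF$. Now invoke Proposition~\ref{structurelemma}: the differential on $\CF(\branched(I))$ vanishes, so $\CF(\branched(I)) = \HF(\branched(I))$, and a quasi-isomorphism out of it is automatically injective on the chain level. With $\Phi^0$ injective, a minimal-summand argument (essentially Lemma~\ref{lem:filteredisomorphism}) promotes this to injectivity of $\Phi$ itself. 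Your route is symmetric: you could equally well note that $\Psi^0_I \co \CF(\bouquet(I)) \to \CF(\branched(I))$ is a quasi-isomorphism into a complex with zero differential, hence surjective on the chain level, and then the same filtration argument makes $\Psi$ surjective.

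Either way, the point is that Proposition~\ref{structurelemma} upgrades the homology-level statement from handleslide invariance to the chain-level injectivity (or surjectivity) you need, for free. Your proposed area-filtration analysis of $\Psi \circ \Phi$ would work in principle but is substantially more labor, and is exactly the sort of argument the minimal-rank property of branched diagrams is designed to circumvent.
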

\begin{proof}  It follows by iterating Proposition~\ref{prop:equivariant} that the maps $\Psi$ and $\Phi$ are $D$-$D'$-equivariant.  Therefore $\Phi \circ D'^2 = D^2 \circ \Phi = 0$.

Let $\phi_i^0$ be the component of $\phi_i$ which only counts paths of length two (i.e.\ holomorphic triangles). Let $\Phi^0 = \phi^0_2 \circ \cdots \circ \phi^0_n$ so that $\Phi^0$ is a direct sum of maps $\Phi_I^0 \co \CF(\branched(I)) \to \CF(\bouquet(I))$.  The proof of topological invariance of Heegaard Floer homology shows that each of these maps is a quasi-isomorphism.  Now Lemma~\ref{structurelemma} implies that $\Phi^0$ is injective: if $\rank(\phi^0_I) < \dim(\CF(\branched(I)))$, then $\rank((\phi^0_I)_*) < \dim(\CF(\branched(I))) = \dim(\HF(\branched(I)))$ and $\Phi^0_I$ could not be a quasi-isomorphism.

For the sake of contradiction,\footnote{This argument is essentially that of Lemma~\ref{lem:filteredisomorphism}, which is standard.  But as we have not yet proven that $X'$ is a chain complex, we record the argument here.} suppose that $(D')^2(\mathbf{x}) \neq 0$ for some $\mathbf{x} \in X$'.  Then $(D')^2(\mathbf{x})$ may have components in several direct summands.  One (or more) of these summands will be minimal in the order on the cube.  Let $x_0$ be a non-zero component of $(D')^2(\mathbf{x})$ in such a summand.  Then $\Phi \circ (D')^2(\mathbf{x})$ has a component $\Phi^0 \circ (D')^2 (\mathbf{x}) \neq 0$ in $\CF(\bouquet(I_0))$.  Because $x_0$ is in a minimal summand, no other component of $\Phi$ can cancel with $\Phi^0$, so $\Phi \circ (D')^2(\mathbf{x}) \neq 0$.  This contradiction implies that $(D')^2 = 0$.
\end{proof}

The complex $X'$ is filtered by the partial ordering on the cube.  We call the induced spectral sequence $\BR(\diagram)$.  Now we are able to complete the proof of the theorem stated in the introduction. 

\mainthm*

\begin{proof} The map $\Psi$ is a filtered chain map, so it induces a map of spectral sequences.  The induced map on $E^1$ is the direct sum of handleslide maps $\widehat{HF}(\bouquet(I)) \to \widehat{HF}(\branched(I))$ which are shown to be isomorphisms in the proof of the topological invariance of Heegaard Floer homology.  This implies that the maps induced by $\Psi$ are isomorphisms for $k \geq 1$, see Theorem 3.4 in~\cite{McCleary2000}.
\end{proof}

\begin{Cor}  The vector spaces $\BR^k(\diagram)$ are smooth link invariants for $k \geq 2$. \end{Cor}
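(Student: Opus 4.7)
The plan is to deduce the corollary directly from Theorem~\ref{thm:main} together with Baldwin's invariance theorem for the \Ozsvath-\Szabo spectral sequence. First I would fix two diagrams $\diagram$ and $\diagram'$ representing the same link $L \subset S^3$, and write down the target chain of isomorphisms
\[
\BR^k(\diagram) \;\cong\; \OS^k(\diagram) \;\cong\; \OS^k(\diagram') \;\cong\; \BR^k(\diagram'),
\]
valid for all $k \geq 2$. The outer two isomorphisms are provided by Theorem~\ref{thm:main}, which asserts that the branched and bouquet spectral sequences agree after the zeroth page. The middle isomorphism is the content of Baldwin's theorem~\cite{Baldwin2011}: although $\OS^1(\diagram) \cong \CKh(m(\diagram))$ genuinely depends on the diagram (its rank changes under Reidemeister~I and~II), Baldwin shows that each page $\OS^k$ for $k \geq 2$ is a link invariant, naturally isomorphic under any sequence of Reidemeister moves connecting $\diagram$ and $\diagram'$.

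The key step is simply noting that Theorem~\ref{thm:main} provides the isomorphism $\BR^k(\diagram) \cong \OS^k(\diagram)$ as vector spaces for every $k \geq 1$ — indeed the filtered chain map $\Psi \co X \to X'$ constructed in Section~\ref{sec:handleslides} induces an isomorphism of spectral sequences on the $E^1$ page and hence on every subsequent page. Combining this with Baldwin's invariance result shows that $\BR^k(\diagram)$ and $\BR^k(\diagram')$ are abstractly isomorphic whenever $\diagram$ and $\diagram'$ represent isotopic links.

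There is essentially no obstacle here; the corollary is a formal consequence of the main theorem once Baldwin's invariance is invoked. The restriction $k \geq 2$ is unavoidable: at $k = 1$ we have $\BR^1(\diagram) \cong \CKh(m(\diagram))$, whose rank is sensitive to the number of crossings and is not a link invariant. One could strengthen the statement to assert naturality of the invariants $\BR^k$ for $k \geq 2$ — i.e.\ that a link cobordism induces maps compatible with those already known on $\OS^k$ — but that requires more than Baldwin's abstract isomorphism and lies beyond what is needed here.
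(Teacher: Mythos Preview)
Your argument is correct, but it takes a different route from the paper. You deduce the corollary formally: compose the isomorphism $\BR^k(\diagram) \cong \OS^k(\diagram)$ from Theorem~\ref{thm:main} with Baldwin's invariance result for $\OS^k$, and then undo the identification on the other side. This is clean and requires no new work beyond citing the two ingredients.

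The paper instead argues that Baldwin's proof \emph{translates} to the branched setting: a sequence of Reidemeister moves from $\diagram$ to $\diagram'$ induces a filtered chain map $X'(\diagram) \to X'(\diagram')$ directly, and this map is shown to be an isomorphism on $\BR^2$; the spectral sequence comparison theorem (Theorem~3.4 of \cite{McCleary2000}) then propagates the isomorphism to all $k \geq 2$. So the paper builds the Reidemeister maps intrinsically on the branched complexes rather than routing through $\OS$.

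What each buys: your approach is shorter and makes clear that the corollary is a purely formal consequence of Theorem~\ref{thm:main} plus \cite{Baldwin2011}. The paper's approach produces explicit chain-level maps $X'(\diagram) \to X'(\diagram')$, which is a slightly stronger statement and potentially useful if one later wants naturality or functoriality directly in the branched framework (as you yourself note in your final paragraph). For the corollary as stated --- invariance of the vector spaces $\BR^k(\diagram)$ --- either argument suffices.
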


\begin{proof} In~\cite{Baldwin2011}, Baldwin shows that $\OS^k(\diagram)$ is a (smooth) link invariant for $k \geq 2$.  The proofs of these facts translate to the branched setting.  The essential point is that if $\diagram$ and $\diagram'$ are diagrams of $L$, any sequence of Reidemeister moves induces a map $X'(\diagram) \to X'(\diagram')$ which in turn induces an isomorphism $\BR^2(\diagram) \cong \BR^2(\diagram')$.  Theorem 3.4 in~\cite{McCleary2000} implies that the two spectral sequences are isomorphic.
\end{proof}

\section{\Szabo homology, knot Floer homology, and a new conjecture}\label{sec:szabo}

In~\cite{Szabo2013}, \Szabo defines a combinatorial link homology theory which is conjecturally equivalent to the Heegaard Floer homology of the branched double cover.  In this section we compare this theory to the spectral sequence $\BR$ in an attempt to prove Conjecture \ref{conj:original}.  The two theories are not the same, and the discrepancy between them leads us to a variation of \Szabo's theory for link diagrams on a sphere with two basepoints, i.e.\ annular link diagrams.  We conjecture that the analogous theory on the Heegaard Floer side is a version of knot Floer homology.  The evidence for this conjecture (and the motivation for this section) is Proposition~\ref{prop:agreement}, which shows that the conjecture holds for diagrams with two crossings.

\subsection{Brief review of Khovanov homology}\label{subsec:kh}

First we recall the definition of Khovanov homology from~\cite{khovanov2000} to set some notation.  Let $\diagram$ be a spherical link diagram.  Let $V = \Span_{\bF}\{v_+, v_-\}$ for formal symbols $v_+$ and $v_-$.  Let $J$ be a resolution of $\diagram$ so that $\diagram(J)$ has $\|J\|$ closed components, and define
\[
	\CKh(\diagram(J)) = V^{\otimes \|J\|}.
\]
Suppose that $\diagram$ has $c$ crossings.  Define
\[
	\CKh(\diagram) = \bigoplus_{J \in \{0,1\}^c} \diagram(J).
\]
Suppose that $I$ and $J$ are connected by an edge in the cube of resolutions, i.e. they differ only in one coordinate.  Then $\diagram(I)$ and $\diagram(J)$ in one of two ways.  Either $\diagram(J)$ is obtained by merging two components of $\diagram(I)$ into one, or $\diagram(J)$ is obtained by splitting a component of $\diagram(I)$.  Call these components \emph{active} and the remaining components \emph{passive}.  Define
\[
	\partial_1(I,J) \co \CKh(\diagram(I)) \to \CKh(\diagram(J))
\]
as follows.  On each tensor factor of $\CKh(\diagram(I))$ corresponding to a passive circle, $\partial_1(I,J)$ acts as the identity.  (More precisely, it identifies the factor with the corresponding factor in $\CKh(\diagram(J))$ by the identity.)  On the active circles, $\partial_1(I,J)$ acts by a map $m \co V \otimes V \to V$ for a merge or $\Delta \co V \to V \otimes V$ for a split.

Now we can define the differential on Khovanov homology:
\begin{align*}
&\partial_1 \co \CKh(\diagram) \to \CKh(\diagram)\\
&\partial_1 = \sum_{I \prec J} \partial_1(I,J)
\end{align*}
The chain complex has two gradings, $h$ and $q$.
\begin{align*}
h &= |I| - n_- \\
\tilde{q}(v_\pm) &= \pm1 \\
q &= \tilde{q} + h + n_+ - 2n_-
\end{align*}

\begin{Thm*}[\cite{khovanov2000}]
  Let $\diagram$ be a link diagram for $L \subset S^3$.  The pair $(\CKh(\diagram),\partial_1)$ is a chain complex with bigrading $(h,q)$.  Its bigraded homology $\Kh(L)$ (and in fact its chain homotopy type) is a smooth link invariant.
\end{Thm*}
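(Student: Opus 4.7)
The plan is to first verify that $\partial_1^2 = 0$ by a face-by-face analysis of the cube of resolutions. Since $\partial_1$ is a sum of edge maps $\partial_1(I,J)$, the composition $\partial_1^2$ decomposes as a sum over squares $I \prec J, J' \prec K$ in the cube (where $J$ and $J'$ differ from $I$ in one coordinate each). Working over $\bF$, it suffices to show that for each such square the two compositions around it coincide. There are finitely many local configurations at a pair of crossings: both merges, both splits, or one of each, further subdivided by which active circles of the intermediate resolutions coincide. In each case one computes directly with the structure maps $m$ and $\Delta$ of the Frobenius algebra $V \cong \bF[x]/(x^2)$ (with $m(v_+ \otimes v_\pm) = v_\pm$, $m(v_- \otimes v_-) = 0$, $\Delta(v_+) = v_+ \otimes v_- + v_- \otimes v_+$, $\Delta(v_-) = v_- \otimes v_-$). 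The commutativity and cocommutativity of $m$ and $\Delta$, together with the Frobenius relation, make the two compositions agree on each face.

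Next I would confirm the grading assertion. Across every cube edge $|J| = |I| + 1$, so $h$ increases by one. A direct check shows that both $m$ and $\Delta$ shift $\tilde{q}$ by $-1$: for instance, $m(v_+ \otimes v_+) = v_+$ sends $\tilde{q} = 2$ to $\tilde{q} = 1$, and $\Delta(v_+) = v_+ \otimes v_- + v_- \otimes v_+$ has terms of $\tilde{q} = 0$. Together with the edge shift $h \mapsto h+1$, the formula $q = \tilde{q} + h + n_+ - 2n_-$ is preserved by $\partial_1$, so $(\CKh(\diagram),\partial_1)$ is a $(h,q)$-bigraded chain complex.

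For the link invariance, I would show that the chain homotopy type of $(\CKh(\diagram),\partial_1)$ is preserved by each of the three Reidemeister moves. The uniform strategy is to identify an acyclic subcomplex (equivalently, to perform Gaussian elimination along an invertible component of the differential) whose quotient is isomorphic to $\CKh$ of the simpler diagram. For R1, the extra crossing contributes a small cube with two resolutions; one of them carries a new passive circle, and the component of the differential involving that circle's $v_-$ generator is an isomorphism onto a summand, whose cancellation recovers the original complex (up to an $h,q$ shift compensated by $n_\pm$). For R2, the four-vertex cube has a pair of opposite vertices joined by an invertible edge map whose elimination collapses the cube to the desired two-vertex complex. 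For R3, one can chain R2-style cancellations within the $2^3 = 8$ vertex cube of the triple crossing, or equivalently pass through Bar-Natan's common intermediate complex, to exhibit a chain homotopy equivalence between the two sides.

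The main obstacle I expect is R3 invariance: even after R1 and R2 are established by direct Gaussian elimination, the R3 move requires producing an explicit chain map (and homotopy inverse) on an eight-vertex cube that intertwines all six edge maps simultaneously. The cleanest route is to factor R3 through repeated R2 cancellations on sub-cubes of the triple-crossing complex. Working over $\bF$ removes all sign ambiguities, so once the structural reductions are in place the remaining verifications are entirely mechanical.
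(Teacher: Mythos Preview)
Your proposal is a correct outline of the standard proof of Khovanov's theorem: verifying $\partial_1^2 = 0$ via the Frobenius algebra identities on each two-dimensional face, checking the grading shift, and then establishing Reidemeister invariance by cancelling acyclic subcomplexes (Gaussian elimination). This is essentially Khovanov's original argument, streamlined in the manner of Bar-Natan.

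However, the paper does not actually prove this statement. The theorem appears in Section~\ref{subsec:kh} as background, stated with a citation to \cite{khovanov2000} and no accompanying proof. It is recalled only to fix notation for $\CKh$, $m$, $\Delta$, and the gradings $h$ and $q$ before the discussion of \Szabo's extension. So there is nothing in the paper to compare your argument against; you have supplied a proof where the paper simply defers to the literature.
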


\subsection{\Szabo homology and the discrepancy with $\BR$}\label{subsec:szabo}

\begin{Def} Let $C_0$ be an embedded collection of circles in $S^2$.  Let $\decos$ be a collection of $i \geq 1$ oriented arcs embedded in $S^2$ so that $\partial\decos \subset C_0$ and so that the interior of each arc is disjoint from $C_0$.  Each arc is a \emph{decoration}.  The pair $(C_0, \decos)$ is a \emph{$i$-dimensional configuration}.  By doing surgery on each decoration in $\config$ we obtain a new collection of circles $C_1$.  We sometimes use the functional notation $\config \co C_0 \to C_1$.  The components of $C_0$ are called \emph{starting circles} and the components of $C_1$ are called \emph{ending circles}.  The circles in $C_0$ which meet the decorations are called \emph{active}.  The remaining circles are called \emph{passive}.
\end{Def}

\Szabo defines a linear map $\cmap_{\config} \co \CKh(C_0) \to \CKh(C_1)$ which depends on the configuration. The maps are rigged to follow the rules described below.  Let $\config \co C_0 \to C_1$ be a configuration.

\begin{enumerate}
\item If $\config'$ is the image of $\config$ under an orientation-preserving diffeomorphism of $S^2$, then $\cmap_\config = \cmap_{\config'}$ after identifying the components of $\config$ and $\config'$ by the diffeomorphism.
\item If the union of the active circles and their decorations is not connected, then $\cmap_\config = 0$.
\item On tensor factors corresponding to passive circles, $\cmap_\config$ acts as the identity.
\item $\tilde{q}(\cmap_\config(x)) - \tilde{q}(x) = i - 2$. (This is the \emph{grading rule}.)
\item $\cmap_\config = \cmap_{r(\config)}$ where $r(\config)$ is $\config$ with oppositely oriented decorations.
\item There is a dual configuration $\config^* \co C_1 \to C_0$ whose decorations are the images of the decorations of $C$ rotated counterclockwise by $90^\circ$.  Let $m(C^*)$ be the image of this configuration under the antipodal map on $S^2$.  There is also duality endomorphism on $\CKh(\diagram)$ defined on canonical generators by swapping $+$ and $-$ signs.  Write $x^*$ for the dual of $x$.

Let $x$ be a canonical generator of $\CKh(C_0)$ and $y$ a canonical generator of $\CKh(C_1)$.  Then the coefficient of $F_{\config}(x)$ at $y$ is equal to the coefficient of $F_{m(\config^*)}(y^*)$ at $x^*$.
\item Let $p \in C_0$.  Given $\config \co C_0 \to C_1$ we can identify $p$ with a point $p' \in C_1$.  There is a map $x_p: \CKh(C_0) \to \CKh(C_0)$ acting by $x_p (v_+) = v_-$ and $x_p (v_-) = 0$ on the factor assigned to the component containing $p$ and the identity elsewhere.  The map $x_{p'}$ is defined similarly.  The rule states that $\mathcal{F}_{C}(\im x_p) \subset  \im(x_{p'}) $.
\end{enumerate}

The exact form of these maps can be found in~\cite{Szabo2013}.

\begin{Rem} Many of these rules have counterparts in Heegaard Floer theory.  For example, rule (3) is analogous to the behavior of a polygon-counting map in a multi-diagram whose curves are all parallel.  The grading rule corresponds to the \emph{grading conjecture} that there is a grading on the \Ozsvath-\Szabo spectral sequence so that the differential on $E^k$ acts with grading $2 - k$, see the introduction of~\cite{Baldwin2011} or the last section of~\cite{Greene2013}.  The duality rule has analogues in Heegaard Floer theory (Theorem 3.5 of~\cite{Ozsvath2004b}) and Khovanov homology (see Bar-Natan's reformulation~\cite{BarNatan2005}, in which the duality rule can be understood as turning a cobordism upside-down).  The last rule rule allows for the definition of a reduced theory and seems to be related to the dotted cobordisms in \cite{BarNatan2005}.
\end{Rem}

Let $\diagram$ be a spherical link diagram for a link $L$.  The bigraded vector space underlying $\CSz(\diagram)$ is $\CKh(\diagram)$.   Choose orientations for decorations $\mathbf{t}$ on $\diagram(I^0)$.  This decorates every other diagram in the cube of resolutions for $\diagram$ under the convention that change of resolution rotates decorations counterclockwise by $90^\circ$.  Let $F$ be a $k$-dimensional face in the cube of resolutions with initial face $\CSz(\diagram(I))$ and final face $\CSz(\diagram(J))$.  The difference between $\diagram(I)$ and $\diagram(J)$ is encoded by $k$ decorations.  This defines a configuration $\config(I,J)$ and thus a map
\[
  \cmap_{\config(I,J); \mathbf{t}} \co \CSz(\diagram(I)) \to \CSz(\diagram(J)).
\]
For $i>1$, let
\[
  \partial_{i; \mathbf{t}} \co \CSz(\diagram) \to \CSz(\diagram)
\]
be the sum of all maps along the $i$-dimensional faces.  Let $\partial_{1;\mathbf{t}}$ be the Khovanov differential.  Define
\[
  \partial_{\mathbf{t}} = \sum_{i\geq 1} \partial_{i; \mathbf{t}} =
\]

\begin{Thm*}[\Szabo]
  The pair $(\CSz(\diagram), \partial_\mathbf{t})$ is a bigraded chain complex.  It's bigraded chain homotopy type and homology are link invariants.

  The chain homotopy type of $\Sz(\diagram, \mathbf{t})$ does not depend on the orientations $\mathbf{t}$.  That   is, if $\mathbf{t}$ and $\mathbf{t}'$ are distinct orientations for the decorations on $\diagram(I^0)$, then $\CSz  (\diagram, \mathbf{t}) \simeq \CSz(\diagram, \mathbf{t}')$.
\end{Thm*}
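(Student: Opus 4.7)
The plan follows the structure of Szabo's original argument in~\cite{Szabo2013}, with four parts: (a) $\partial_\mathbf{t}^2 = 0$, (b) the bigrading is respected, (c) invariance under Reidemeister moves, and (d) independence from the orientations $\mathbf{t}$.

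For (a), I would expand $\partial_\mathbf{t}^2 = \sum_{i,j \geq 1} \partial_{j;\mathbf{t}} \circ \partial_{i;\mathbf{t}}$ and collect contributions by target face in the cube of resolutions. For a fixed face $F$ of dimension $n \geq 2$ connecting $\diagram(I)$ and $\diagram(J)$, with associated configuration $\config(I,J)$, the coefficient of $F$ in $\partial_\mathbf{t}^2$ is $\sum_K \cmap_{\config(K,J);\mathbf{t}} \circ \cmap_{\config(I,K);\mathbf{t}}$, summed over intermediate resolutions $K$ with $I < K < J$. By rule (3) I may pass to the connected active part of each configuration, reducing to a finite list of elementary connected configurations. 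For each such configuration I would enumerate all splittings into two subconfigurations and show that the resulting compositions cancel in pairs over $\bF$. Rule (2) kills disconnected intermediate terms, rule (4) constrains which tensor factors can appear in the image, and rules (6) and (7) further restrict the outputs through duality and the $x_p$-action. This combinatorial case analysis is the main obstacle of the proof; there is no clean structural principle that replaces Szabo's explicit enumeration of configurations by the combinatorics of their decoration graphs.

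For (b), a direct check: $\partial_{i;\mathbf{t}}$ raises $h$ by $i$ (since it moves $i$ coordinates in the cube) and, by rule (4), raises $\tilde q$ by $i-2$, so $q = \tilde q + h + n_+ - 2n_-$ increases by $2(i-1) \geq 0$, with equality precisely when $i=1$. Hence $\partial_\mathbf{t}$ is $q$-filtered with $\partial_1$ as the $q$-preserving Khovanov piece. For (c), I would adapt the Khovanov Reidemeister-invariance proof: for each move, identify in $\CSz(\diagram)$ an acyclic subcomplex whose quotient is $\CSz(\diagram')$, then verify that the higher differentials $\partial_{i;\mathbf{t}}$ for $i \geq 2$ respect the associated deformation retraction. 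The configurations that cross the Reidemeister region either vanish by rule (2) or decompose through it in a controlled way by rules (3), (4), and (6); the bookkeeping for the higher differentials is where the analogue of Bar-Natan's approach requires the most care beyond the classical setup.

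For (d), rule (5) handles the simultaneous reversal of all decorations of a given configuration, but two decoration orientations $\mathbf{t}$ and $\mathbf{t}'$ on $\diagram(I^0)$ that differ at a single decoration induce local changes across many configurations in the cube (via the 90$^{\circ}$ rotation convention). To reduce to rule (5), I would show by induction on the symmetric difference of $\mathbf{t}$ and $\mathbf{t}'$ that a single local reorientation produces a chain isomorphism: because we work over $\bF$, this reduces to matching the configuration maps on both sides, and the match is forced by rules (1), (5), and a check that the decorations rotate in tandem along each face containing the reoriented arc. Iterating over the decorations then yields $\CSz(\diagram,\mathbf{t}) \simeq \CSz(\diagram,\mathbf{t}')$.
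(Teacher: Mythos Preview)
The paper does not prove this theorem; it is quoted from \cite{Szabo2013} as background. So there is no ``paper's own proof'' to compare against directly. That said, the paper does describe enough of \Szabo's argument later (in the proof of Proposition~\ref{prop:filteredinvariant}) to see that your part (d) goes wrong.

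Your sketch for (a), (b), and (c) is a reasonable summary of \Szabo's strategy: the case analysis for $\partial^2 = 0$, the grading check, and the Reidemeister argument adapted from Khovanov's proof are all in the right spirit. The gap is in (d). You acknowledge that rule~(5) only covers simultaneous reversal of \emph{all} decorations in a configuration, but then claim that a single reorientation can be reduced to rule~(5) by ``matching the configuration maps on both sides.'' This does not work: if you flip one decoration on $\diagram(I^0)$, then any higher-dimensional face containing that crossing carries a configuration with exactly one of its several arrows reversed, and rule~(5) says nothing about such a partial reversal. In general $\cmap_{\config}$ genuinely changes under a single-arrow flip, so the differentials $\partial_{\mathbf{t}}$ and $\partial_{\mathbf{t}'}$ are not equal and there is no identity-type chain isomorphism to find.

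What \Szabo actually does---and what the present paper invokes when it adapts the argument to the doubly-pointed setting---is to construct explicit \emph{edge homotopy maps} $H \co \CSz(\diagram;\mathbf{t}) \to \CSz(\diagram;\mathbf{t}')$ and show that $\Id + H$ is a $q$-filtered chain map which is an isomorphism by the standard ``identity plus lower-order'' argument (Lemma~\ref{lem:filteredisomorphism} here). The map $H$ is built from configuration maps much like $\partial$ itself, and the verification that $\Id + H$ is a chain map is another combinatorial case check in the style of (a). Your proposal should replace the rule-(5) reduction with this mechanism.
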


The order filtration on the cube of resolutions induces a spectral sequence $\ESz(\diagram)$ from Khovanov homology to \Szabo homology.  Based on structural similarities and Seed's computations~\cite{Seed2011}, Seed and \Szabo conjectured that this is the same as \Ozsvath and \Szabo's.

\seedszabo*

We started this project in the hope of showing that $\BR(\diagram) \cong \ESz(\diagram)$, but this is not true in general.  Consider the configuration shown in Figure~\ref{fig:configuration1}.  Figure~\ref{fig:notriangle} shows the corresponding branched multi-diagram.  If the spectral sequences agreed, there would be a single (mod 2) holomorphic quadrilateral representing the map $\mathcal{F}_{C_2}$.  Figure~\ref{fig:notriangle} shows that there are no such quadrilaterals: the region corresponding to such a quadrilateral would differ from the region shown by a periodic domain, and the resulting domain would have negative coefficients.  In fact, the only non-zero component of the Heegaard Floer map from the multi-diagram in Figure~\ref{fig:notriangle} sends $v_+ \otimes v_+$ to $v_+ \otimes v_-$, where in the latter tensor product the labeling of the outer circle comes first.  This map cannot be part of \Szabo's theory as it violates the grading rule.  (The map assigned by \Szabo is $v_+ \otimes v_+ \mapsto v_+ \otimes v_+$.)

\begin{figure}[h]
\includegraphics[width=.25\textwidth]{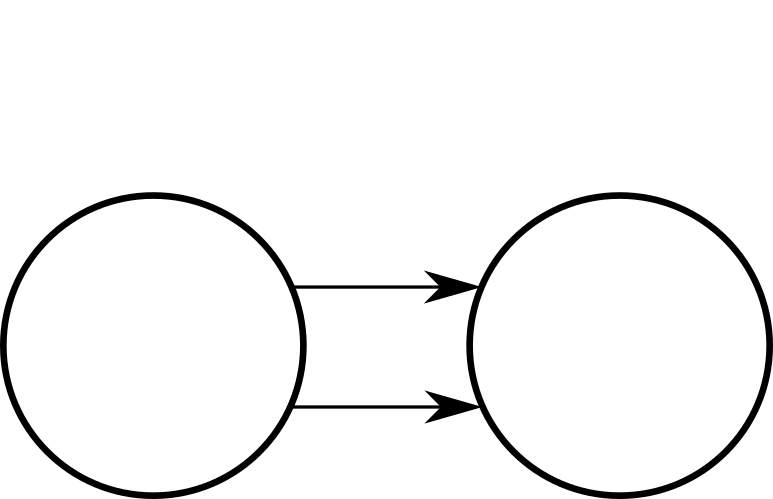}
\caption{Configuration 1 in \Szabo's labeling.}
\label{fig:configuration1}
\end{figure}

Further examination of the two-dimensional configurations suggests that the discrepancy between $\BR$ and $\ESz$ stems from the spherical symmetry in $\ESz$ which is not present in $\BR$.  In $\ESz$, the two ending circles of Configuration 1 are indistinguishable: they may be isotoped past $\infty$ to appear concentric.  This is not possible in the Heegaard Floer world due to the basepoints.  In the next section we add some basepoints to \Szabo's theory to compensate.

\begin{figure}[h]
\includegraphics[width=\textwidth]{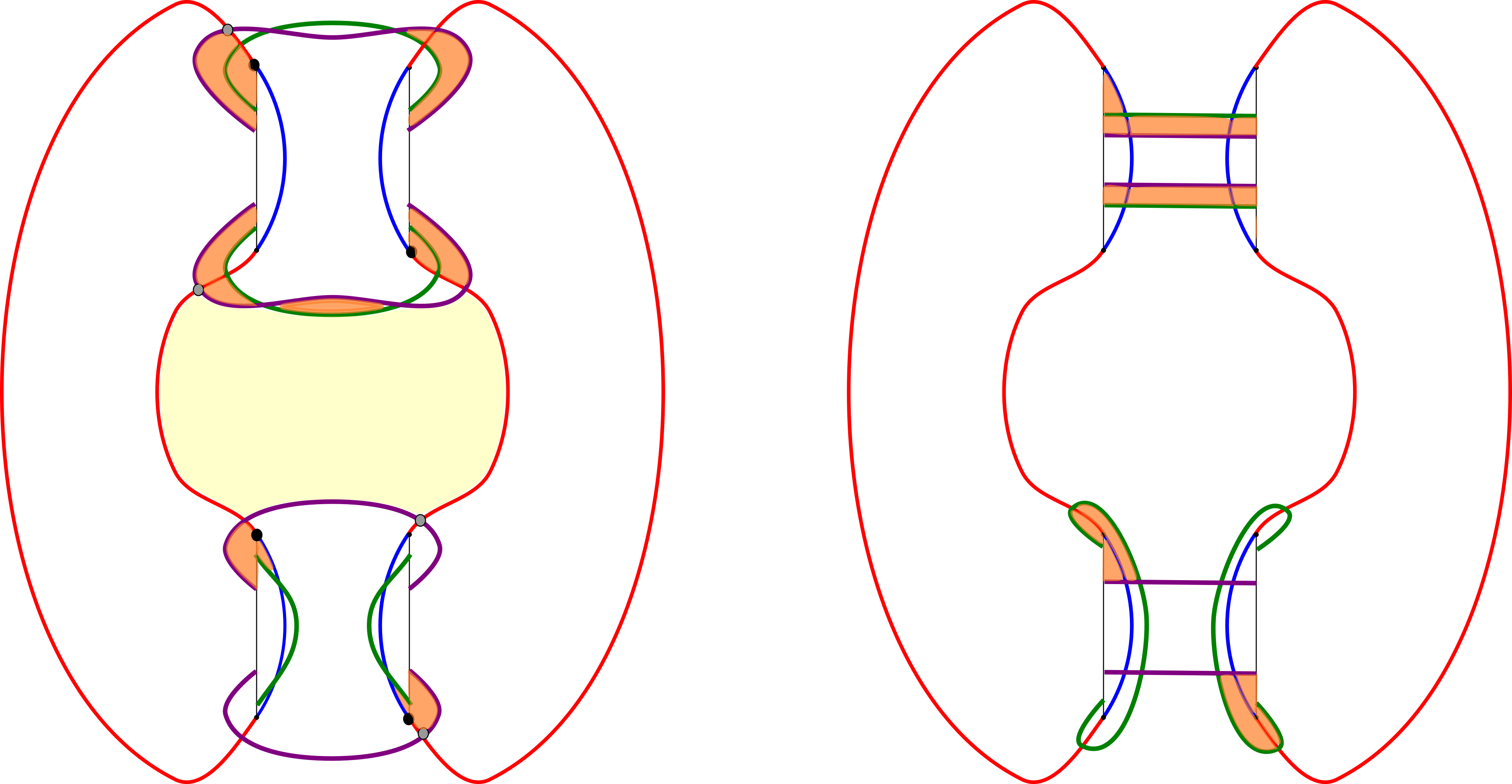}
\caption{The branched Heegaard multi-diagram from configuration 1.  The black (resp. gray) dots mark intersection points which correspond to the Heegaard Floer generators corresponding to $v_+ \otimes v_+$ on the starting (resp. ending) circles as in Section~\ref{subsec:minimality}.  The colored region corresponds to a Whitney quadrilateral.  The orange regions have coefficient $+1$ and the yellow region has coefficient $-1$.}
\label{fig:notriangle}
\end{figure}

\begin{Rem}
Jacobsson~\cite{Jacobsson2004}, Khovanov~\cite{Khovanov2006}, and Bar-Natan~\cite{BarNatan2005} independently showed that Khovanov homology with $\bz/2\bz$-coefficients is functorial under cobordism, in the sense that if $L$ and $L'$ are links in $S^3$ and $W$ is a cobordism from $L$ to $L'$ in $S^3 \times I$, there is a non-trivial map $F_W \co \Kh(L) \to \Kh(L')$ which depends only on the isotopy class of $W$ rel boundary and which can be computed from a diagrammatic description.  Heegaard Floer homology assigns maps to cobordisms of three-manifold: a cobordism $\tilde{W} \co S^3(L) \to S^3(L')$ induces a map $F_{\tilde{W}} \co \HF(S^3(L)) \to \HF(S^3(L'))$.  In forthcoming work the author and Baldwin show that \Szabo's theory is functorial, and so it is reasonable to conjecture that these maps on \Szabo homology agree with those on Heegaard Floer homology, see ~\cite{Baldwin2011}.
\end{Rem}

\subsection{A doubly-pointed link homology theory}

Let $\diagram$ be a spherical link diagram with $c$ crossings.  Let $z, w \in S^2 \setminus \diagram$ be two basepoints, not necessarily in distinct components.  For a resolution $J$, each component of $\diagram(J)$ represents either a trivial or non-trivial class in $H_1(S^2 \setminus \{z,w\})$.  Suppose that $\diagram(J)$ has $\ell$ trivial and $m$ non-trivial components.  Let $W$ be the vector space generated by the symbols $w_+$ and $w_-$ as in~\cite{Grigsby2011}.  Define
\begin{equation}\label{eqn:pointedCSz}
  \CSz_{z,w}(\diagram(J)) = W^{\otimes \ell} \otimes V^{\otimes m}.
\end{equation}
To be more precise, each non-trivial component is assigned a factor of $W$ and each trivial factor is assigned a factor of $V$.  Let 
\[
\CSz_{z,w}(\diagram) = \bigoplus_{J \in \{0,1\}^c} \CSz_{z,w}(\diagram(J)).
\]
The $h$-grading on $\CSz(\diagram)$ extends immediately to $\CSz_{z,w}(\diagram)$.  After defining $\tilde{q}(w_\pm) = \tilde{q}(v_\pm)$, the $q$-grading extends too.  There is a third grading $k$ defined on generators by $k(v_\pm) = \pm 1$ and $k(w_\pm) = 0$.  On Khovanov homology this grading is called the \emph{annular grading} and has been studied extensively, see for example~\cite{Roberts2013},~\cite{Grigsby2013}.

Define $\tilde{\sigma} \co W \to V$ by
\begin{align*}
  \tilde{\sigma}(w_\pm) &= v_\mp \\
\end{align*}
and linearity.  For any resolution $J$, this induces a map $\sigma(J)_{z,w} \co \CSz_{z,w}(\diagram(J)) \to \CSz(\diagram(J))$; in the decomposition of (\ref{eqn:pointedCSz}),
\[
  \sigma_{z,w}(J) = \tilde{\sigma}^{\otimes \ell} \otimes \Id^{\otimes m} \!.
\]
In short, apply $\tilde{\sigma}$ to factors corresponding to non-trivial components and do nothing to the other factors.  The inverse $\sigma^{-1}_{z,w}$ is given by the same recipe.  The sum of these maps over all resolutions of $\diagram$ is a pair of maps $\sigma_{z,w} \co \CSz_{z,w}(\diagram) \to \CSz(\diagram)$ and $\sigma_{z,w}^{-1}\co \CSz(\diagram) \to \CSz_{z,w}(\diagram)$.

\begin{Def}
  Let $\diagram$ be a knot diagram in the annulus $S^2 \setminus \{z,w\}$.  Fix decorations $\decos$.  Define $\partial_{z,w} \co \CSz_{z,w}(\diagram) \to \CSz_{z,w}(\diagram)$ by
  \[
    \partial_{z,w; \decos} = \sigma^{-1}_{z,w} \circ \partial_\decos \circ \sigma_{z,w}.
  \]
  The pair $(\CSz_{z,w}(\diagram), \partial_{z,w;\decos})$ is the \emph{doubly-pointed \Szabo chain complex} of $\diagram$.
\end{Def}

\begin{Prop}
  $(\CSz_{z,w}(\diagram), \partial_{z,w;\decos})$ is a chain complex.
\end{Prop}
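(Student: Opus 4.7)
The plan is to observe that $\sigma_{z,w}$ and $\sigma^{-1}_{z,w}$ are honest mutually inverse vector space isomorphisms, so that $\partial_{z,w;\decos}$ is literally the conjugate of \Szabo's differential $\partial_\decos$. The claim that $\partial_{z,w;\decos}^2 = 0$ then reduces by a purely formal computation to \Szabo's theorem that $\partial_\decos^2 = 0$.

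First I would verify invertibility at the level of a single tensor factor. The map $\tilde{\sigma} \co W \to V$ is the linear extension of the bijection $w_+ \mapsto v_-$, $w_- \mapsto v_+$, and the proposed inverse $\tilde{\sigma}^{-1}$ is the linear extension of $v_+ \mapsto w_-$, $v_- \mapsto w_+$. These compose to the identity on basis elements in either order. Next I would extend this to each resolution: for a fixed $J$, the decomposition $\CSz_{z,w}(\diagram(J)) = W^{\otimes \ell} \otimes V^{\otimes m}$ is matched by the decomposition $\CSz(\diagram(J)) = V^{\otimes \ell} \otimes V^{\otimes m}$, with the same tensor factors declared non-trivial (where $\tilde\sigma$ acts) and the same declared trivial (where the identity acts). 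Hence $\sigma_{z,w}(J)$ and $\sigma^{-1}_{z,w}(J)$ are mutually inverse bijections between finite bases. Summing over $J \in \{0,1\}^c$ then gives two-sided inverses $\sigma^{-1}_{z,w} \circ \sigma_{z,w} = \Id$ on $\CSz_{z,w}(\diagram)$ and $\sigma_{z,w} \circ \sigma^{-1}_{z,w} = \Id$ on $\CSz(\diagram)$.

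Once invertibility is in hand, the proof is a one-line calculation:
\begin{align*}
  \partial_{z,w;\decos}^2
  &= \sigma^{-1}_{z,w} \circ \partial_\decos \circ \sigma_{z,w} \circ \sigma^{-1}_{z,w} \circ \partial_\decos \circ \sigma_{z,w} \\
  &= \sigma^{-1}_{z,w} \circ \partial_\decos^2 \circ \sigma_{z,w}
  = 0,
\end{align*}
using \Szabo's theorem $\partial_\decos^2 = 0$ quoted earlier in the section.

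There is no genuine obstacle, but the one bookkeeping subtlety to flag is that the partition of the components of $\diagram(J)$ into $H_1(S^2 \setminus \{z,w\})$-trivial and non-trivial classes varies with the resolution $J$, so the maps $\sigma_{z,w}(J)$ must be assembled resolution-by-resolution before summing. This is precisely why the argument is formal conjugation and produces no compatibility conditions with $\partial_\decos$ beyond what is already in \Szabo's theorem; the price is that the gradings $(h,q)$ are no longer a priori preserved, which justifies treating doubly-pointed \Szabo homology as a distinct invariant worth studying in its own right, as the grading $k$ introduced just before the definition then gives new structure.
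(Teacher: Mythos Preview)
Your proposal is correct and is essentially identical to the paper's own proof: both observe that $\sigma_{z,w}$ is an honest vector-space isomorphism and then compute $\partial_{z,w;\decos}^2 = \sigma_{z,w}^{-1}\partial_\decos^2\sigma_{z,w} = 0$ using \Szabo's theorem. The paper in fact remarks immediately afterward that nothing particular to link homology is used---conjugating a differential by an invertible linear map always yields a differential---which is exactly the point you make.
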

\begin{proof}
This follows directly from the fact that $\partial_\decos$ is a differential.
\[
  \partial_{z,w;\decos}^2 
  = (\sigma^{-1}_{z,w}\,\partial_\decos\,\sigma_{z,w})^2 
  = \sigma^{-1}_{z,w}\,\partial_\decos \, \sigma_{z,w} \, \sigma^{-1}_{z,w} \, \partial_\decos \, \sigma_{z,w} 
  = \sigma^{-1}_{z,w} \, \partial_{\decos}^2 \, \sigma_{z,w} = 0.
\]
\end{proof}
We are not using anything particular to link homology: if $(V,d)$ is a complex and $\sigma \co W \to V$ is an invertible, linear map, then $(W, \sigma^{-1} d \sigma)$.  If not for the $k$ grading, then $\CSz_{z,w}$ would differ from $\CSz$ essentially by a change of basis.  Moreover, if $z$ and $w$ lie in the same region of $S^2 \setminus \diagram$ then $\CSz_{z,w}(\diagram) \cong \CSz(\diagram)$ as bigraded vector spaces and $k \equiv 0$.

A map like $\sigma$ appeared in \cite{Grigsby2015} under the name $\Theta$.  This map was used to study representation-theoretic interpretations of Lee's deformation of link homology and the annular grading.  We do not know if the present work is relevant, but it is interesting to see that \Szabo and Lee's theories have been connected in \cite{Sarkar2016} with conjectural applications to Heegaard Floer homology.  So it's promising to see this map appear here.

The Khovanov differential is $k$-filtered (i.e.\ non-increasing in $k$), but the full \Szabo differential is not.  (This is easy to arrange in Configuration 1, Figure \ref{fig:configuration1}.)  Neither is $\partial_{z,w;\decos}$ see Table~\ref{table:table}.  The last row of the table is tantalizing: it shows the only configuration whose map increases $k$.  It may be possible to change the map assigned to an $E$ configuration to remove this possibility.  Then it would be possible to define ``annular \Szabo homology'' in a much more straightforward way.

\begin{table}
	\noindent
	\begin{tabular}[h]{m{130pt}ccccc}
	 & \includegraphics[width=11pt]{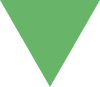} & \includegraphics[	width=11pt]{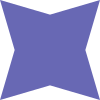} & \includegraphics[width=11pt]{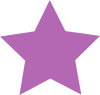} & \includegraphics[width=11pt]{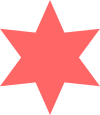} & \includegraphics[width=11pt]{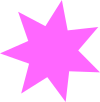} \\	
	A: \includegraphics[width=110pt]{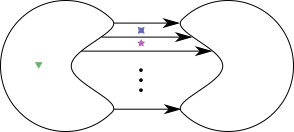} & \stack{$\Delta \tilde{q} = i - 2$}{$\Delta k = 0$} & \stack{$\Delta \tilde{q} = i - 4$}{$	\Delta k = -1$} &	 \stack{$	\Delta \tilde{q} = i - 6$}{$\Delta k = -2$} & & \\	
	B: \includegraphics[width=110pt]{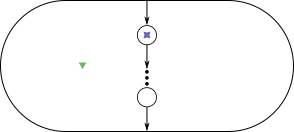} & \stack{$\Delta \tilde{q} = i - 2$}{$\Delta k = 0$} & \stack{$\Delta \tilde{q} 	= i - 6$}{$\Delta k = -2$} & & & \\	
	C: \includegraphics[width=110pt]{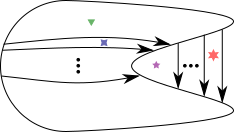} & \stack{$\Delta \tilde{q} = 	i - 2$}{$\Delta k = 0$} & \stack{$\Delta \tilde{q} 	= i - 2$}{$\Delta k = 0$} & \stack{	$\Delta \tilde{q} = i - 2$}{$\Delta k = 0$} & \stack{$\Delta \tilde{q} = i -6$}{$\Delta k = -2	$}	 & \\	
	D: \includegraphics[width=110pt]{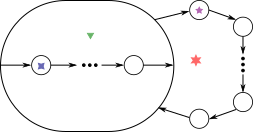} & \stack{$\Delta \tilde{q} = i - 2$	}{$\Delta k = 0$} & \stack{$\Delta \tilde{q} = i - 6$}{$\Delta k = -2$} & \stack{$	\Delta \tilde{q} = i - 2$}{$\Delta k = 0$}  &\stack{$\Delta \tilde{q} = i - 2$}{$\Delta k = 0$}	 	& \\	
	E: \includegraphics[width=110pt]{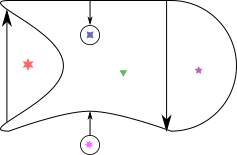} &\stack{$\Delta \tilde{q}  = i + 2$	}{$\Delta k = 2$} & \stack{$\Delta \tilde{q} = i - 2$}{$\Delta k = 0$} & \stack{$\Delta \tilde{q} = i - 2$}{$\Delta k = 0$} & \stack{$\Delta \tilde{q} = i - 2$}{$\Delta k = 0$}	  & \stack{$\Delta q = i - 2$}{$\Delta k = 0$}\\
	\end{tabular}
	\caption{In the leftmost column, the five families of configurations to which \Szabo assigns non-zero maps.  Each is $i$-dimensional.  The colored markings are basepoints.  The numbers shown are the $\tilde{q}$- and $k$-degrees of the maps $\cmap_{z,w;\config}$ where $w$ is the colored marking and $z$ is the point at infinity.}\label{table:table}
\end{table}

\begin{Prop}\label{prop:newGradingRule}
  Write $\partial_{z,w;i;\decos}$ for the part of $\partial_{z,w;\decos}$ which counts configurations of size $i$.  $\partial_{z,w;i;\decos}$ has $(q - 2k)$-degree $2i - 2$.  Equivalently, it has $(q - 2h - 2k)$-degree $-2$.
\end{Prop}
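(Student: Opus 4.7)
The plan is to combine \Szabo's grading rule for the configuration maps $\cmap_\config$ on $\CSz$ with the observation that the isomorphism $\sigma_{z,w}$ is graded, in the precise sense that it intertwines the label grading $\tilde q - 2k$ on $\CSz_{z,w}$ with the label grading $\tilde q$ on $\CSz$.

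First, from the definition $\partial_{z,w;i;\decos} = \sigma^{-1}_{z,w} \circ \partial_{i;\decos} \circ \sigma_{z,w}$, the map $\partial_{z,w;i;\decos}$ decomposes into a sum of conjugates $\sigma^{-1}_{z,w}\,\cmap_\config\,\sigma_{z,w}$, one for each $i$-dimensional configuration $\config$. Since the $h$-grading shifts by exactly $+i$ between the initial and terminal resolutions of such a configuration, it suffices to show that each of these conjugate maps shifts the label-only grading $\tilde q - 2k$ by $i - 2$.

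Next I would verify the key grading identity: for every resolution $J$ and every generator $x \in \CSz_{z,w}(\diagram(J))$,
\[
  \tilde q(\sigma_{z,w}(x)) \;=\; (\tilde q - 2k)(x).
\]
This is an immediate check on the four kinds of label---$v_\pm$ on a trivial circle and $w_\pm$ on a non-trivial circle---using $\sigma_{z,w}(w_\pm) = v_\mp$ and $\sigma_{z,w}(v_\pm) = v_\pm$. Once the identity is established on both the domain and codomain sides of a configuration, \Szabo's grading rule $\tilde q(\cmap_\config(y)) - \tilde q(y) = i - 2$ translates directly to $(\tilde q - 2k)$-degree $i - 2$ for every summand $\sigma^{-1}_{z,w}\,\cmap_\config\,\sigma_{z,w}$. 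Adding the $h$-shift of $+i$ yields $(q - 2k)$-degree $2i - 2$; the equivalent formulation in terms of $(q - 2h - 2k)$ follows at once because $h$ itself contributes $+i$.

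The main thing to watch is that the partition of circles into trivial and non-trivial generally differs between the initial and terminal resolutions of a configuration, so the grading identity for $\sigma_{z,w}$ must be applied with respect to the correct partition on each side. As an independent consistency check, one can read off $\Delta\tilde q$ and $\Delta k$ from Table~\ref{table:table} for each of the configuration types $A$--$E$ and each relative placement of $\{z,w\}$, and verify $\Delta\tilde q + i - 2\Delta k = 2i - 2$ in every entry; the uniformity of that arithmetic across the table is precisely the content of the slick $\sigma_{z,w}$-based argument above.
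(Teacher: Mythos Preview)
Your approach is correct and genuinely different from the paper's.  The paper proves the proposition by a brute-force case analysis recorded in Table~\ref{table:table}, checking $\Delta\tilde q$ and $\Delta k$ for each configuration type and each placement of the basepoint.  It then explicitly laments, in the Remark immediately following, that ``it would be nice to have a more conceptual proof.''  Your conjugation argument is exactly such a proof: once one verifies that $\sigma_{z,w}$ carries the grading $\tilde q - 2k$ on $\CSz_{z,w}$ to the grading $\tilde q$ on $\CSz$, \Szabo's grading rule for $\cmap_\config$ transports directly to the annular grading rule for $\sigma_{z,w}^{-1}\cmap_\config\sigma_{z,w}$, with no case analysis needed.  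What the paper gains from its approach is only the explicit table of $(\Delta\tilde q,\Delta k)$ values; what your approach gains is a one-line explanation of why those values always combine to $i-2$.

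One caution about the ``immediate check.''  Your identity $\tilde q(\sigma_{z,w}(x)) = (\tilde q - 2k)(x)$ requires the $k$-grading to vanish on the $V$-factors (trivial circles) and to equal $\pm 1$ on the $W$-factors (non-trivial circles), which is the standard annular convention of \cite{Grigsby2011}.  The paper's displayed line ``$k(v_\pm)=\pm 1$ and $k(w_\pm)=0$'' says the opposite, and with those literal values your identity fails---for instance $\tilde q(\sigma_{z,w}(v_+)) = \tilde q(v_+) = 1$ while $(\tilde q - 2k)(v_+) = 1-2 = -1$.  The paper's conventions in this passage are internally inconsistent (compare the two sentences assigning $W$- versus $V$-factors to trivial and non-trivial circles), and the intended convention---the one consistent with Proposition~\ref{prop:finalgradings} and with the Remark's parenthetical ``this is always the case if $z$ and $w$ lie in the same region''---is the standard one under which your check goes through.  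You should spell out which $k$-convention you are using rather than leave the verification implicit.
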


We can summarize this proposition as ``$\cmap_{z,w;\config}$ satisfies an \emph{annular grading rule}, i.e. the grading rule with $\tilde{q}$ replaced by $\tilde{q} - 2k$.''

\begin{proof}
Up to a global shift, $q = \tilde{q} + h$.  The map assigned to an $i$-configuration has $h$-degree $i$.  So it suffices to show that the $(\tilde{q} - 2k)$-degree of $\cmap_{z,w;\config}$ is $i - 2$.

Schematically, there are only finitely many places where one can place two basepoints on a configuration with non-zero map.  Table~\ref{table:table} shows that, in any case, the $(\tilde{q}-2k)$-degree of $\cmap_{z,w;\config}$ is $i - 2$.  (To be more precise: work by induction on the size of configurations.  The table establishes all base cases.  All larger configurations with non-zero maps are given by adding arrows and circles to those base cases.  It is easy to see that the number of non-trivial components does not change under these additions and therefore that $\Delta k$ does not change.  It also follows that $\Delta \tilde{q}$ is proportional to $i$ just as in the case without basepoints.)
\end{proof}

\begin{Rem}
It would be nice to have a more conceptual proof of Proposition~\ref{prop:newGradingRule}.  For example, let $w$ and $w'$ be basepoints separated by a single arc of $C_0$.  Suppose that $\cmap_{z,w;\config}$ satisfies the annular grading rule.  (This is always the case if $z$ and $w$ lie in the same region of $S^2 \setminus \diagram$.)  One could prove the theorem by showing that $\cmap_{z,w';\config}$ also satisfies the annular grading rule.  But it seems challenging to prove this by purely formal considerations.
\end{Rem}

\begin{Prop}\label{prop:filteredinvariant}
The $(h,q-2k)$-bigraded homology of $(\CSz_{z,w}(\diagram), \partial_{z,w})$ is an invariant of annular links.  In particular, it does not depend on a choice of decorations or diagrams.
\end{Prop}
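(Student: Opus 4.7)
The plan is to pull back \Szabo's chain homotopy equivalences from $(\CSz(\diagram), \partial_\decos)$ to $(\CSz_{z,w}(\diagram), \partial_{z,w;\decos})$ through the chain isomorphism $\sigma_{z,w}$, and to observe that the annular grading rule implies the resulting doubly-pointed chain homotopy equivalences preserve the $(h, q-2k)$-bigrading. The key preliminary observation is that, for any \Szabo configuration $\config$, the conjugate $\cmap_{z,w;\config} := \sigma_{z,w}^{-1} \circ \cmap_\config \circ \sigma_{z,w}$ satisfies the same local bookkeeping of trivial and non-trivial circles as in the proof of Proposition~\ref{prop:newGradingRule}. Hence an $i$-dimensional $\cmap_{z,w;\config}$ has $(h, q-2k)$-bidegree $(i, 2i-2)$, identical to the $(h, q)$-bidegree of the original $\cmap_\config$.

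For independence of decorations, \Szabo's theorem provides a chain homotopy equivalence $F \co \CSz(\diagram, \decos) \to \CSz(\diagram, \decos')$ that is a formal sum of configuration maps and preserves the $(h, q)$-bigrading. Because $\sigma_{z,w}$ intertwines $\partial_\decos$ with $\partial_{z,w;\decos}$, the conjugate $F_{z,w} := \sigma_{z,w}^{-1} \circ F \circ \sigma_{z,w}$ is automatically a chain homotopy equivalence between the doubly-pointed complexes. Since $F_{z,w}$ is correspondingly a formal sum of $\cmap_{z,w;\config}$'s, each summand preserves $(h, q-2k)$ with the same bidegree that the corresponding summand of $F$ preserves $(h, q)$, so $F_{z,w}$ is bigraded.

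For invariance under an annular Reidemeister move $\diagram \to \diagram'$ supported in a disk $D \subset S^2 \setminus \{z,w\}$, the same recipe applies to \Szabo's Reidemeister chain homotopy equivalence $G$. The annular restriction guarantees that the local model of the move does not involve the basepoints, so the partition of circles into trivial and non-trivial classes in $S^2 \setminus \{z,w\}$ is consistent across the cubes of resolutions for $\diagram$ and $\diagram'$; this is what allows the conjugated map $G_{z,w}$ to make sense as a map between the doubly-pointed complexes and to inherit a uniform $(h, q-2k)$-bidegree.

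The main obstacle is confirming that every chain map and chain homotopy appearing in \Szabo's invariance proofs genuinely decomposes as a formal sum of configuration maps of the type controlled by Proposition~\ref{prop:newGradingRule}, with individual bidegrees summing to give the claimed total bidegree. The chain-level formulas in \cite{Szabo2013} make this plausible, but a careful recapitulation is required to make sure that no auxiliary maps escape the configuration-map framework; once this is verified, the argument above furnishes the bigraded invariance.
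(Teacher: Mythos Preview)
Your overall strategy---transport \Szabo's invariance maps through the conjugation $\sigma_{z,w}$---is exactly the paper's. The gap is in the grading step. The assertion that \Szabo's decoration-invariance map $F$ ``preserves the $(h,q)$-bigrading'' is not right: in \Szabo's proof one has $F = \Id + H$ where the edge-homotopy piece $H$ is built from configuration maps of strictly positive $q$-degree, so $F$ is not $q$-homogeneous. Consequently $F_{z,w} = \Id + H_{z,w}$ is not $(q-2k)$-homogeneous either, and ``$F_{z,w}$ is bigraded'' does not by itself give invariance of the $(h,\,q-2k)$-bigraded homology. A sum of bihomogeneous maps of \emph{different} bidegrees need not induce a graded map on homology.

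The paper closes this gap with Lemma~\ref{lem:filteredisomorphism}. One first checks---by exactly the case analysis you allude to, the analogue of Proposition~\ref{prop:newGradingRule} for edge-homotopy configurations---that $H_{z,w}$ has strictly positive $(q-2k)$-degree. Then the lemma says that $\Id + H_{z,w}$, being the identity plus a strictly filtration-shifting piece, is a \emph{filtered chain isomorphism}, not merely a chain homotopy equivalence. That is what yields invariance of the bigraded homology. Your last paragraph correctly flags the bidegree verification as the main labor, but you are missing this filtered-isomorphism step that converts ``each summand has a controlled bidegree'' into ``the bigraded homology is unchanged.'' For the Reidemeister moves the paper proceeds as you do, conjugating \Szabo's maps $\rho$, $\rho'$, $J$ and checking the chain-map and homotopy identities formally; the grading behavior there is again governed by the same Proposition~\ref{prop:newGradingRule}-style analysis rather than by any claim that the maps are homogeneous.
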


We will need the following standard lemma.

\begin{Lemma}\label{lem:filteredisomorphism}
Let $f \co C \to C'$ be a filtered map of chain complexes with bounded $\bz$-filtrations.  Write $k'$ for the grading underlying the filtration.  Suppose that $f$ can be written as $f = f_0 + f_{> 0}$ so that $f_0$ is a filtration-preserving isomorphism and $f_{>0}$ is a sum of maps of lower-order, i.e. $k'(f_{>0}(x)) < k'(x)$ or $f_{>0}(x) = 0$ for all $x \in C$.  Then $f$ is an isomorphism of filtered complexes.
\end{Lemma}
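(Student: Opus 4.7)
The plan is to construct an inverse to $f$ by a Neumann-series trick. Since $f_0$ is a filtration-preserving isomorphism, it admits a filtration-preserving two-sided inverse $f_0^{-1}$, so I can factor $f = f_0 \circ (I + g)$ where $g := f_0^{-1} \circ f_{>0} \co C \to C$. By hypothesis $f_{>0}$ strictly lowers filtration level, and $f_0^{-1}$ preserves it, so $g$ strictly lowers filtration on every nonzero element: $k'(g(x)) < k'(x)$ whenever $g(x) \neq 0$.

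Next I would invoke boundedness of the filtration to deduce that $g$ is nilpotent. If every nonzero element of $C$ has filtration level lying in $[a,b]$, then $g^{b-a+1} = 0$, because each application of $g$ strictly decreases the level and no element can sit below level $a$. Hence $I + g$ has the two-sided inverse
\[
  h := \sum_{n=0}^{b-a} (-1)^n g^n,
\]
a finite sum. Each term $g^n$ preserves or lowers filtration, so $h$ is a filtered map. It follows that $f^{-1} := h \circ f_0^{-1} \co C' \to C$ is filtered, and since the set-theoretic inverse of a bijective chain map is automatically a chain map, $f^{-1}$ commutes with the differentials. Therefore $f$ is an isomorphism of filtered chain complexes.

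The only nontrivial point is the nilpotency of $g$, and this is exactly where the boundedness hypothesis is used: without it, a filtration-strictly-decreasing operator could fail to be nilpotent and the Neumann series would be purely formal, giving at best a completed inverse rather than an actual one. Everything else is routine bookkeeping. I would also note that the construction is symmetric in $C$ and $C'$ (one could equally well factor $f = (I + f_{>0} f_0^{-1}) \circ f_0$ and run the same argument on $C'$), which provides a sanity check.
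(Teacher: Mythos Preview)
Your argument is correct and takes a somewhat different route from the paper's. The paper proves injectivity and surjectivity separately: for injectivity it observes that $f_0(x) = -f_{>0}(x)$ forces both sides to vanish by comparing filtration levels, and for surjectivity it runs an induction up the filtration, at each stage using that $f_{>0}(f_0^{-1}(w'))$ lies strictly below $w'$ and is therefore already in the image. Your Neumann-series approach packages the same induction into the identity $(I+g)^{-1} = \sum_{n\ge 0}(-1)^n g^n$, with boundedness guaranteeing the sum is finite via nilpotence of $g$. What you gain is an explicit formula for $f^{-1}$ and a very clean explanation of where boundedness enters; the paper's argument is slightly more elementary but is really the same induction unwound. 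One small point worth making explicit: you use that $f_0^{-1}$ is filtered, which follows because ``filtration-preserving'' here means $k'(f_0(x)) = k'(x)$ (as opposed to merely non-increasing), so the inverse inherits the same property.
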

\begin{proof}
Suppose that $f(x) = 0$ for some $x$.  Then $f_0(x) + f_{> 0}(x) = 0$, and in fact $f_0(x) = 0$; otherwise, $f_0(x) = - f_{>0}(x) \neq 0$, which is ruled out by the definition of each map.  Injectivity of $f_0$ implies that $x = 0$, so $f$ is injective.

Suppose that $y'$ is in the lowest filtration level of $C'$.  Then $f_{>0}(f_0^{-1}(y')) = 0$, so $f(f_0^{-1}(y')) = y'$.  So the lowest filtration level is in the image of $f$.  Write $C_i$ for all elements $x$ of $C$ with $k(x) \leq i$.  Let $w' \in C'_i$.  We work by induction on $i$: suppose that $C_{i-1} \subset \im(f)$.  By hypothesis, there is some $u \in C$ so that $f(u) = f_{>0}(f_0^{-1}(w'))$.  Therefore $f(f_0^{-1}(w') - u) = w' + f_{> 0}(f_0^{-1}(w')) - f_{>0}(f_0^{-1}(w'))$.  We conclude that $w'$ is in the image of $f$.  By induction, all of $C'$ is in the image.
\end{proof}

\begin{proof}[Proof of Proposition~\ref{prop:filteredinvariant}]
To show invariance under change of decoration, it suffices to consider decorations $\decos$ and $\decos'$ which differ at one crossing.  \Szabo defined so-called \emph{edge homotopy maps} $H \co \CSz(\diagram; \decos) \to \CSz(\diagram; \decos')$ and showed that $\Id(\decos, \decos') + H$ is a $q$-filtered isomorphism between chain complexes using Lemma~\ref{lem:filteredisomorphism}.

Define $H_{z,w} = \sigma_{z,w} H\sigma_{z,w}^{-1}$ and define $G = \Id + H_{z,w}$.  That $G$ is a chain map follows from the fact that $H_{z,w}$ satisfies the same formal relations as $H$, see section 6 of~\cite{Szabo2013}.   An argument like the proof of Proposition~\ref{prop:newGradingRule} shows that $H_{z,w}$ has $(q-2k)$-degree $1$.  Invariance under of change of decoration follows from Lemma~\ref{lem:filteredisomorphism}.

Khovanov showed that Khovanov homology is a link invariant by assigning a quasi-isomorphism to each Reidemeister move.  \Szabo showed that, with the right decorations, one can do the same for $\CSz$.  Having shown invariance under change of decoration, we can assume that we are using the right ones.

Suppose that $\diagram$ and $\diagram'$ are annular link diagrams which differ by a single annular Reidemeister move, i.e a Reidemeister move supported away from $z$ and $w$.  Write $\partial$ and $\partial'$ for the differentials on $\CSz_{z,w}(\diagram)$ and $\CSz_{z,w}(\diagram')$.  Write $\rho \co \CSz_{z,w}(\diagram) \to \CSz_{z,w}(\diagram')$ and $\rho' \co \CSz_{z,w}(\diagram') \to \CSz_{z,w}(\diagram)$ for the pair of chain maps defined by \Szabo and $J \co \CSz_{z,w}(\diagram) \to \CSz_{z,w}(\diagram)$ for the homotopy from $\rho\rho'$ to $\Id$.  Let $\rho_{z,w} \co \CSz_{z,w}(\diagram) \to \CSz_{z,w}(\diagram')$ be defined by $\rho_{z,w} = \sigma_{z,w}^{-1} \rho \sigma_{z,w}$.  Similarly, let $J_{z,w} = \sigma _{z,w}^{-1} J  \sigma_{z,w}$.  We have
\[
\partial'_{z,w}\rho_{z,w} + \rho_{z,w}\partial_{z,w} = \sigma_{z,w}\partial\rho\sigma_{z,w}^{-1} + \sigma_{z,w} \rho \partial \sigma_{z,w}^{-1} = \sigma_{z,w}(\partial\rho + \rho \partial) \sigma_{z,w}^{-1} = 0.
\]
An identical argument shows that $\rho'_{z,w}$ is a chain map.  Similarly,
\[
\rho_{z,w}\rho'_{z,w} = \sigma_{z,w}(\rho \rho') \sigma^{-1}_{z,w} = \sigma_{z,w}(J\partial' + \partial J)\sigma^{-1}_{z,w} = J_{z,w}\partial'_{z,w} + \partial_{z,w}J_{z,w}.
\]
\end{proof}

From now on we write $\Sz_{z,w}(\diagram)$ for the total homology of $\CSz_{z,w}(\diagram)$ and leave the decorations out of the notation.

In the un-pointed case, $\partial_1$ is exactly the Khovanov differential.  Write $\CKh_{z,w}(\diagram)$ for the complex $(\CSz_{z,w}(\diagram),\partial_{z,w;1})$.  We summarize the relationships between all these groups below.  Write $\CSz^{i,j}$ for the part of $\CSz$ with $(h,q)$-grading $(i,j)$.  Write $\CSz^{i,j,k}_{z,w}$ for the part of $\CSz$ with $(h,q,k)$-grading $(i,j,k)$.

\begin{Thm}\label{thm:2ptszabosummary} The map $\sigma_{z,w} \co \CSz(\diagram) \to \CSz_{z,w}(\diagram)$ is a chain map and induces a map $\sigma^*_{z,w} \co\Sz_{z,w}(\diagram) \cong \Sz(\diagram)$.  It enjoys the following properties:
\begin{enumerate}
	\item $\sigma_{z,w}$ is a $(q-2k)$-graded isomorphism, i.e.\ it carries $\bigoplus_{k \in \bz} \CSz_{z,w}^{i,j - 2k,k}(\diagram)$ to $\CSz^{i,j}(\diagram)$.
	\item $\sigma_{z,w}$ is $q$ graded if and only if $z$ and $w$ lie in the same region of $S^2 \setminus \diagram$.
	\item $\sigma_{z,w}^*$ induces a map $\sigma_{z,w;1}^* \co \Kh_{z,w}(\diagram) \cong \Kh(\diagram)$.  This map is $q$-graded if and only if $z$ and $w$ lie in the same region of $S^2 \setminus \diagram$.
	\item There is a spectral sequence from $\CSz_{z,w}(\diagram)$ converging to $\Sz_{z,w}(\diagram)$ whose second page is $\Kh_{z,w}(\diagram)$.  The map $\sigma_{z,w}$ induces an isomorphism of spectral sequences which agrees with $\sigma^*_{z,w;1}$ on the second page.
\end{enumerate}
\end{Thm}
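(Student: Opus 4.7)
The plan is to verify each claim in turn; most of the work reduces to direct checks on generators, to the defining identity $\partial_{z,w;\decos} = \sigma_{z,w}^{-1} \circ \partial_\decos \circ \sigma_{z,w}$, or to a standard spectral-sequence argument applied to a filtered chain isomorphism. First, $\sigma_{z,w}$ is a vector-space isomorphism on each summand of the cube of resolutions, since $\tilde{\sigma}$ is invertible on $W$. The chain-map condition is immediate from the defining identity, which rearranges to $\sigma_{z,w} \circ \partial_{z,w;\decos} = \partial_\decos \circ \sigma_{z,w}$, and the induced map on homology is then an isomorphism $\sigma_{z,w}^*$.

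For the grading statements (1) and (2) I would check behavior on generators factor by factor. On a factor of $\CSz_{z,w}(\diagram(J))$ assigned to a trivial component of $\diagram(J)$, $\sigma_{z,w}$ is the identity and preserves $\tilde{q}$ and $k$ separately. On a factor assigned to a non-trivial component, $\sigma_{z,w}$ sends $w_\pm$ to $v_\mp$, and one computes $\tilde{q}(v_\mp) - 2k(v_\mp) = \mp 1 - 2(\mp 1) = \pm 1 = \tilde{q}(w_\pm) - 2k(w_\pm)$, so $\tilde{q} - 2k$ is preserved on every tensor factor. The homological grading $h$ depends only on the cube vertex and is preserved trivially. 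Hence $\sigma_{z,w}$ preserves the bigrading $(h, q - 2k)$, giving (1). It preserves $q$ alone exactly when every factor corresponds to a trivial component, i.e.\ iff $z$ and $w$ lie in the same region of $S^2 \setminus \diagram$, which is (2).

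For (3), since $\partial_1$ is the one-dimensional-configuration summand of $\partial_\decos$, by definition $\partial_{z,w;1} = \sigma_{z,w}^{-1}\,\partial_1\,\sigma_{z,w}$, so $\sigma_{z,w}$ restricts to a chain isomorphism from $(\CSz_{z,w}(\diagram), \partial_{z,w;1})$ onto $(\CKh(\diagram), \partial_1)$, which passes to the isomorphism $\sigma_{z,w;1}^* \co \Kh_{z,w}(\diagram) \cong \Kh(\diagram)$ on homology; the grading claim is inherited from (2). For (4), both differentials are filtered by the cube of resolutions and $\sigma_{z,w}$ respects the filtration because it is defined summand-wise. The standard spectral sequence of a filtered complex gives sequences on each side whose $E^1$ differentials are $\partial_1$ and $\partial_{z,w;1}$ respectively, and whose $E^2$ pages are therefore $\Kh(\diagram)$ and $\Kh_{z,w}(\diagram)$. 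Theorem 3.4 of \cite{McCleary2000} promotes the filtered chain isomorphism $\sigma_{z,w}$ to an isomorphism of spectral sequences, and the identification on $E^2$ is exactly $\sigma_{z,w;1}^*$ from (3).

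The steps are essentially formal once the conjugation definition is in hand; I do not anticipate any serious obstacle. The only mildly delicate point is the bookkeeping in the grading computation, keeping track of the signs $k(v_\pm) = \pm 1$ alongside the swap $\tilde{\sigma}(w_\pm) = v_\mp$, but this is a small verification rather than a genuine difficulty.
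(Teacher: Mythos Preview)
Your proof is correct and follows the same route as the paper, just with more detail: the paper defers (1) to the computation in the proof of Proposition~\ref{prop:newGradingRule}, declares (2) obvious, handles (3) by observing that $\partial_{z,w;1}$ is exactly the $h$-degree-one summand of $\partial_{z,w}$, and invokes the standard filtered-complex spectral sequence (what it calls the Leray construction) for (4). One small point of care in your verification of (1): you compute $\tilde q - 2k$ on the image $v_\mp \in \CSz(\diagram)$, but that target carries no $k$-grading; the content of (1) is that the $(q-2k)$-grading on the source matches the plain $q$-grading on the target, so the generator check should be phrased with $k$ only on the $\CSz_{z,w}$ side.
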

\begin{proof}
The first point was proved in the proof of Proposition~\ref{prop:newGradingRule}.  The second is obvious.  All that's left for the third point is that $\sigma_{z,w}$ is a $\partial_{z,w;1}$-chain map, and this is clear as $\partial_{z,w;1}$ is exactly the part of $\partial_{z,w}$ which shifts the homological grading by $1$.  The existence of the spectral sequence in property (4) follows from the usual Leray construction.  As $\sigma_{z,w}$ does not shift homological grading, it induces a isomorphism of spectral sequences.
\end{proof}

Write $\ESz_{z,w}(\diagram)$ for the spectral sequence from item (4) of the theorem.

\subsection{Gradings from Knot Floer homology and a new conjecture}

In Proposition~\ref{prop:firstIsomorphism} we gave a graded correspondence between generators of $\CF(\branched(I))$ and generators of $\CSz(\diagram(I))$.  We saw at the end of Section~\ref{subsec:szabo} that this correspondence does not induce a chain map between $\branched(\diagram)$ and $\CSz(\diagram)$.  In fact, the differential on $\branched(\diagram)$ does not even satisfy the grading rule.  In this section, we propose that $\CF(\branched(\diagram(I))$ should instead be compared to $\CSz_{z,w}(\diagram(I))$.  In other words, $\BR(\diagram)$ satisfies an annular grading rule.  To formulate this conjecture we first add an annular-type grading to $\CF(\branched(\diagram(I))$.

\begin{Def}[\cite{Ozsvath2008}] A \emph{balanced, $2\ell$-pointed Heegaard diagram} consists of $(\Sigma, \bal, \bbe, \basew, \basez)$ where
\begin{itemize}
\item $(\Sigma, \bal, \bbe, \basew)$ is a \emph{balanced, $\ell$-pointed Heegaard diagram}.
\item $\basew = \{w_1, \ldots, w_\ell\}$ and $\basez = \{z_1, \ldots, z_\ell\}$  are each collections of $\ell$ basepoints so that, for each $i$, the points $z_i$ and $w_i$ lie in the same component of $\Sigma \setminus \bal$ and $\Sigma \setminus \bbe$.
\end{itemize}
The diagram $(\Sigma, \bal, \bbe, \basew)$ presents a three-manifold $Y$.  The complete diagram $(\Sigma, \bal, \bbe, \basew, \basez)$ presents a link $L \subset S^3$.  (Note that the case $\ell = 1$ is not the same as the doubly-pointed diagrams from Section \ref{sec:ozssz}!  $w$ and $z$ basepoints are handled differently.)
\end{Def}

Recall that every branched diagram has a pair of basepoints $w$ and $w'$, the pre-images of a basepoint on the link diagram $\diagram$.  Place a second basepoint on $\diagram$ and write $\basez = \{z, z'\}$ for its pre-images on the Heegaard surface $V$.  The diagram
\[
  \branched_{\basez,\basew}(I) = (V, \boldA, \boldB(I), \basew, \basez)
\]
is a balanced, $4$-pointed diagram for a link $\liftA$ in $S^3(\diagram(I))$.  The two basepoints on $S^2$ describe a bridge splitting of an oriented unknot.  The link $\liftA$ is the pre-image of this unknot in $S^3(\diagram(I))$.  For example, if $\diagram$ is a diagram for an annular braid closure in $S^2 \setminus \{z,w\}$, then $\liftA$ is the pre-image of the braid axis.  In this case, $\liftA$ has one component if $\diagram$ is the closure of a braid on an odd number of strands and two components otherwise.  In any case $\liftA$ is null-homologous.

Heegaard Floer homology was extended to links shortly after it's introduction, and since then the knot Floer homology package has grown to throng of powerful invariants, see~\cite{Manolescu2016} for a summary and further references.  In this paper we use only the additional filtration given by the knot Floer package, the \emph{Alexander filtration} $\alex$.  Let $x$ and $y$ be generators of $\CF(\branched(I))$ so that $\pi_2(x,y)$ is non-empty, and let $\phi \in \pi_2(x,y)$.  It turns out that the following definition is consistent, i.e. it does not depend on a choice of $\phi$ as long as $\pi_2(x,y)$ is non-empty.
\[
\alex(x) - \alex(y) = \left(n_{z_1}(\phi) + n_{z_2}(\phi)\right) - \left(n_{w_1}(\phi) + n_{w_2}(\phi)\right).
\]
As we have described it, $\alex$ is a relative filtration which induces a grading on standard generators.  $\CF(\branched(I))$ also has the absolute Maslov grading $\gr$ described in $\ref{subsec:minimality}$.\footnote{As in that section, we only consider torsion $\Spinc$ structures and therefore may consider absolute Maslov gradings.}  

\begin{Def}\label{def:cfkish}
  Write $\CF_{\basez}(\branched(I))$ for the chain complex $\CF(\branched(I))$ with the filtration given by $\alex$ for the knot $\liftA$ described by $\basew$ and $\basez$.
\end{Def}

\begin{Prop}\label{prop:finalgradings}
Equip $\CF_{\basez}(\branched(I))$ with bigrading $(2\gr, 2\alex)$. Equip $\CSz_{z,w}(\diagram(I))$ with the bigrading $(q, k)$.  The isomorphism of vector spaces of Proposition~\ref{prop:firstIsomorphism} defines a bigraded isomorphism $\CF_{\basez}(\branched(I), \liftA) \cong \CSz_{z,w}(\diagram(I))$.
\end{Prop}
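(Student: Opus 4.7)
The plan is to leverage Proposition~\ref{prop:firstIsomorphism}, which already identifies generators of $\CF(\branched(I))$ with generators of $\CSz(\diagram(I))$ and matches the first coordinate of the bigrading (under the conversion $q = \tilde{q} + h + n_+ - 2n_-$ together with $\tilde{q} = 2\gr$). With that done, the genuine content of the proposition is to show that the Alexander grading $2\alex$ on the left coincides with the annular grading $k$ on the right, so this is the step to focus on. Note that adding the basepoints $\basez$ changes neither the set of generators nor their Maslov gradings; it only equips $\CF(\branched(I))$ with the additional Alexander filtration coming from the lift $\liftA$.

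My approach is to decompose both complexes as tensor products indexed by the closed components of $\diagram(I)$. On the \Szabo side this decomposition is built into the definition of $\CSz_{z,w}(\diagram(I))$, with one tensor factor per component (of type $V$ or $W$ depending on whether the component is non-trivial or trivial in $H_1(S^2 \setminus \{z,w\})$). On the Heegaard Floer side, the key observation is that up to isotopy and handleslides $\branched(I)$ is a connected sum of local branched diagrams, one per component of $\diagram(I)$, each presenting an $S^1 \times S^2$ summand with two generators. Under the bijection from Proposition~\ref{structurelemma}, those two local generators correspond to the two orientations of the associated component, and the K\"unneth formula for doubly-pointed Heegaard Floer homology then reduces the computation of $\alex$ to a component-by-component model calculation.

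The calculation splits into two base cases. If $C$ is non-trivial, then the bridge unknot $U \subset S^2$ specified by $\{z, w\}$ crosses the disk bounded by $C$, so its lift $\liftA$ links the $S^1 \times S^2$ factor coming from $C$; an explicit Whitney bigon $\phi$ between the two local generators then has $n_{\basez}(\phi) - n_{\basew}(\phi) = \pm 1$, yielding local Alexander gradings $\pm 1/2$ and matching $k(v_\pm) = \pm 1$. If instead $C$ is trivial, then $U$ lies entirely on one side of $C$, the lift $\liftA$ does not link the local $S^1 \times S^2$ factor, and any representative Whitney bigon picks up equal $\basez$- and $\basew$-multiplicity, so both local generators have $\alex = 0$, matching $k(w_\pm) = 0$.

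The main obstacle is the explicit bookkeeping in these two model cases: one must track the positions of the four basepoints $\{z_1, z_2, w_1, w_2\}$ under the connected-sum decomposition and identify a Whitney bigon whose boundary realizes the change between the two local generators, in order to extract the intersection numbers. Once those base cases are verified, additivity of $\alex$ under connected sum (equivalently, the Alexander-graded K\"unneth formula) upgrades the component-wise identification $2\alex = k$ to the full bigraded isomorphism claimed in the proposition.
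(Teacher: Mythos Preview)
Your proposal is correct and follows essentially the same route as the paper: reduce to checking $2\alex = k$, work one component at a time, split into the non-trivial versus trivial cases, and compute the Alexander grading difference via an explicit Whitney bigon and its basepoint multiplicities. The only cosmetic difference is that you package the component-by-component reduction through a connected-sum/K\"unneth decomposition (requiring you to track basepoints under handleslides), whereas the paper stays on the original branched diagram and simply compares two generators $x,y$ that differ on a single component $Q$; the paper's framing is slightly more direct, but the content of the computation is identical.
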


\begin{proof}
  The only thing to check is that that $2k$ agrees with $\alex$.  Let $x$ and $y$ be generators of $\CF_{\basez}(\branched(I))$ which agree on all but one component of $\branched(I)$.  Call that component $Q$.  As in the proof of Proposition~\ref{prop:firstIsomorphism}, we can work component by component.  There is a pair of domains representing Whitney disks $\phi_1, \phi_2 \in \pi_1(x,y)$ which contribute to the differential on $\CF(\branched(I))$.  If $Q$ is non-trivial, then $n_{z_1}(\phi_1) = 1$, $n_{z_2}(\phi_1) = 0$, $n_{z_1}(\phi_2) = 0$, and $n_{z_2}(\phi_2) = 1$.   So $\alex(x) - \alex(y) = 1$, while $k(x) - k(y) = 2$.

  If $Q$ is trivial, then the same argument shows that $\alex(x) - \alex(y) = 0$ and of course $k(x) - k(y) = 0$.
\end{proof}

\begin{Rem}\label{rem:grading}
It is interesting to compare this result with \cite{Grigsby2011} where the Alexander grading is connected with the annular grading via sutured Floer homology.  Grigsby and Wehrli also give a representation-theoretic interpretation.
\end{Rem}

We developed $\CSz_{z,w}$ to correct for the discrepancy of Section~\ref{subsec:szabo}.  Indeed, $\CSz_{z,w}$ agrees with $\BR(\diagram)$ for two-dimensional configurations in the following sense.  Let $\config_{z,w} \co C_0 \to C_1$ be a doubly-pointed configuration of size two.  It defines a map $\cmap_{z,w;\config} \co \CSz_{z,w}(C_0) \to \CSz_{z,w}(C_1)$.  There is also  a map $f_{\basez,\basew} \co \CF_{\basez}(\branched(C_0)) \to \CF_{\basez}(\branched(C_1))$ described in Section \ref{sec:ozssz} (for bouquet diagrams) which counts holomorphic rectangles in a multi-diagram which interpolates between $\branched(C_0)$ and $\branched(C_1)$.  These rectangles should not intersect $\basew$, and their contribution to $\alex$ is determined by their intersection number with $\basez$.

\begin{Prop}\label{prop:agreement}
Let $I_0 \co \CSz_{z,w}(C_0) \to \CF_{\basez}(\branched(C_0))$ and $I_1 \co \CSz_{z,w}(C_1) \to \CF_{\basez}(\branched(C_1))$ be the isomorphism of Proposition~\ref{prop:finalgradings}.  For some set of decorations on $\config$, the following diagram commutes.
\[
\begin{tikzcd}
  \CSz_{z,w}(C_0) \arrow[r,"\cmap_{z,w;\config}"] \arrow[d,"I_0"]
  & \CSz_{z,w}(C_1) \arrow[d,"I_0"] \\
  \CF_{\basez}(\branched(C_0)) \arrow[r,"f_{\basez,\basew}"] 
  & \CF_{\basez}(\branched(C_1))
\end{tikzcd}
\]
\end{Prop}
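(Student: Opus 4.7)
The plan is to verify the proposition by a finite enumeration of the two-dimensional configurations and a case-by-case polygon count in the associated branched multi-diagrams. Szabó classifies the two-dimensional configurations with non-zero maps into the five families (A–E) reproduced in Table~\ref{table:table}, and within each family there are only finitely many combinatorial types once the positions of the two basepoints $\basew$ and $\basez$ (and the ``active'' circles) are specified. This reduces the proposition to a checklist.

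First I would fix, for each configuration $\config \co C_0 \to C_1$, a concrete branched multi-diagram $(V,\boldA,\boldB(C_0),\boldB(C_1),\basew,\basez)$ obtained by the construction of Section~\ref{sec:brancheddiagrams}, placing the two pre-images of $z$ and $w$ in the regions prescribed by the configuration. Since a size-two configuration changes exactly two crossings, the branched multi-diagram differs from a doubly-parallel one only in two small windows, and all the polygons that are candidates for $f_{\basez,\basew}$ are supported in the union of these windows together with the active components. This local finiteness is what makes the verification tractable: all other curves come in pairs of close parallels, so any bigon/rectangle through them can be dealt with by the cancellation lemma (Lemma~\ref{cancellationlemma}) exactly as in the proof of Proposition~\ref{prop:equivariant}.

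Next, for each configuration I would enumerate the embedded holomorphic rectangles from Whitney quadrilaterals in $\pi_2(\mathbf{x},\Theta_1,\Theta_2,\mathbf{y})$ with $\mu=0$, $n_{\basew}=0$, and record the intersection number with $\basez$ (which determines the Alexander shift). Any other domain representing such a class differs from the embedded one by a periodic domain, and the admissibility of branched diagrams (Proposition~\ref{periodicdomains}) together with the positivity condition forces that alternative to have negative multiplicities and therefore not to contribute, exactly as in Figure~\ref{fig:notriangle}. This lets me read off the coefficient of each output generator from the combinatorics of the shaded region alone. Translating the resulting map back through the isomorphisms $I_0, I_1$ (which send generators to orientations of the components of $\diagram(C_j)$) yields an explicit formula for $I_1^{-1} f_{\basez,\basew}\, I_0$. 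I would then choose the decorations on $\config$ so that the corresponding Szabó map, composed with $\sigma_{z,w}$ and $\sigma_{z,w}^{-1}$, produces the same formula; the freedom of orienting decorations is precisely what provides the sign/choice needed to match the holomorphic count in each case.

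The main obstacle is the configuration of type E in Table~\ref{table:table}, the one that increases $k$. There the rectangle count has to produce a map that sends $v_+\otimes v_+$ to a term involving $v_-$ on the non-trivial circle, i.e.\ it must raise the annular grading; this is exactly the phenomenon that forced the introduction of $\sigma_{z,w}$, and checking that the branched-diagram count and $\sigma_{z,w}^{-1}\cmap_\config\sigma_{z,w}$ produce matching coefficients — rather than being off by a factor that cannot be absorbed into a choice of decoration — is the delicate step. Once E is handled, the remaining families A, B, C, D are largely variations on the degenerate cases in which the annular grading is preserved, and for those the $\sigma_{z,w}$-conjugation acts trivially on the relevant generators, so the verification reduces to checking that the branched polygon count matches the ordinary $\cmap_{\config}$, which is tractable using the same admissibility and positivity argument that rules out the exotic pentagon in Figure~\ref{fig:notriangle}.
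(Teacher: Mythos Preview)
Your approach is essentially the paper's own: enumerate the finitely many two-dimensional configurations, draw the branched multi-diagram for each, locate the unique Maslov-index-zero rectangle domain between the relevant generators, and invoke admissibility/periodic domains to rule out any other contribution. One correction worth flagging: the families A--E in Table~\ref{table:table} are \Szabo's general $i$-dimensional families, not the two-dimensional case list; the actual enumeration (from~\cite{Szabo2013}) consists of sixteen two-dimensional configurations, or eight after identifying reversed decorations, and the paper simply checks those eight pictures directly rather than organizing by A--E and basepoint placement.
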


\begin{proof}
There are sixteen two-dimensional configurations described in~\cite{Szabo2013}, or eight after ignoring decorations.  To prove the proposition, one must draw the eight Heegaard multi-diagrams as in Figure~\ref{fig:notriangle} and identify the unique domain representing a holomorphic rectangle between the appropriate generators.  A periodic domain argument shows that there are no other holomorphic rectangles.
\end{proof}

Proposition~\ref{prop:agreement} may be seen as an enrichment of Theorem 6.3 of \cite{Ozsvath2005b}.  There it is shown that there is an isomorphism between $\OS^1(\diagram)$ and $\CKh(\diagram)$ so that the differentials agree.  Proposition~\ref{prop:agreement} shows that the isomorphism of vector spaces in Proposition \ref{prop:finalgradings} intertwines the differentials on $\BR^2(\diagram)$ and $\ESz^2_{z,w}(\diagram)$.

\subsection{Further constructions and speculation}

Branched diagrams were introduced in a different context by Grigsby and Wehrli.  They use them to prove the following theorem which is closely related to a theorem of Roberts \cite{Roberts2013}.

\begin{Thm*}\cite{Grigsby2010}
There is a spectral sequence from $\AKh(L)$ to a $\widetilde{\mathrm{HFK}}(S^3(L), \liftA)$ where $\liftA$ is the annular axis's pre-image in $S^3(L)$. 
\end{Thm*}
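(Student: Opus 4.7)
The plan is to construct the spectral sequence by running the proof of Theorem~\ref{thm:main} with the Alexander filtration turned on via the second set of basepoints $\basez$. For each resolution $I$, the $4$-pointed diagram $\branched_{\basez,\basew}(I)$ presents the pair $(S^3(\diagram(I)),\liftA)$ by construction. Assemble
\[
  \widetilde{X} = \bigoplus_{I \in \{0,1\}^c} \CF_{\basez}(\branched(I))
\]
and define $\widetilde{D}$ by counting holomorphic polygons satisfying both $n_{\basew}(\phi) = 0$ and $n_{\basez}(\phi) = 0$, in exact analogy with $D'$ from Section~\ref{sec:handleslides}. The cube order gives one filtration and $\alex$ gives another. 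Restricting to $n_{\basez}(\phi)=0$ is equivalent to passing to the associated graded of the Alexander filtration, so the homology of $(\widetilde{X},\widetilde{D})$, viewed with the cube filtration, will be the target $\widetilde{HFK}(S^3(L),\liftA)$ (tensored by an $S^1\times S^2$ factor, which should be accounted for in the statement or killed by reducing, exactly as in Theorem~\ref{thm:main}).

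The next step is to identify the early pages with annular Khovanov homology. The zeroth-page differential still vanishes because the argument of Proposition~\ref{structurelemma} is unaffected by imposing the extra basepoint constraint: $\CF_{\basez}(\branched(I))$ has rank $2^n = \rank \widetilde{CAKh}(\diagram(I))$. Then by Proposition~\ref{prop:finalgradings}, the underlying bigraded vector space of $E^1$ is identified with $\CSz_{z,w}^{\,\partial_1}(\diagram) = \widetilde{CAKh}(\diagram)$, and by Proposition~\ref{prop:agreement} the cube-changing part of $\widetilde D$ agrees with the annular Khovanov differential. Taking homology, $E^2 \cong \AKh(L)$. The bigrading $(2\gr, 2\alex)$ on the Heegaard Floer side is matched against $(q,k)$ on the Khovanov side, as guaranteed by Proposition~\ref{prop:finalgradings}.

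The main obstacle is convergence: proving that $\widetilde{D}^2 = 0$ and that the homology is really $\widetilde{HFK}(S^3(L), \liftA)$. The first requires repeating the handleslide arguments of Section~\ref{sec:handleslides}—defining $\bouquet_{\basez,\basew}(I)$, establishing a cancellation lemma analogous to Lemma~\ref{cancellationlemma} that also respects $\basez$, and noting that the handleslide maps preserve the Alexander filtration. For the second, one needs a knot-Floer analog of the main theorem of \cite{Ozsvath2005b} for surgeries realized by resolving link crossings; this requires the multi-pointed machinery of \cite{Manolescu2017}, together with a verification that the pre-image of the unknot cut out by $\{z,w\} \subset S^2$ is genuinely the axis link $\liftA$ in each branched double cover (a straightforward covering-space argument). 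Once both pieces are in place, the Leray spectral sequence for the cube filtration immediately yields the desired sequence $\AKh(L) \Rightarrow \widetilde{HFK}(S^3(L),\liftA)$.
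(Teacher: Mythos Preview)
This theorem is not actually proved in the present paper: it is a result of Grigsby and Wehrli, cited here, and the paper only describes their method in one sentence. Their proof works in the world of \emph{sutured} Floer homology, adapting the \Ozsvath--\Szabo link surgery spectral sequence to sutured manifolds and then identifying the resulting sutured Floer group with $\widetilde{\mathrm{HFK}}$. Your proposal is therefore not a reconstruction of the paper's proof but an alternative strategy, attempting to push the branched-diagram machinery of Sections~\ref{sec:brancheddiagrams}--\ref{sec:handleslides} through with the extra $\basez$ basepoints turned on. That is a reasonable idea and in the spirit of Conjecture~\ref{conj:me1}, but it is genuinely different from what Grigsby--Wehrli do.

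There are two concrete problems with the proposal as written. First, you invoke Proposition~\ref{prop:agreement} to identify the $E^1$ differential with the annular Khovanov differential, but that proposition concerns \emph{two}-dimensional configurations (holomorphic rectangles), i.e.\ the $d_2$ map, not edges. What you actually need is the triangle count of Theorem~6.3 of \cite{Ozsvath2005b} together with Proposition~\ref{prop:finalgradings} to see that imposing $n_{\basez}=0$ on the edge maps picks out exactly the $k$-preserving part of $\partial_1$. Second, and more seriously, the convergence step---showing $H_*(\widetilde{X},\widetilde{D}) \cong \widetilde{\mathrm{HFK}}(S^3(L),\liftA)$---requires a link surgery spectral sequence for knot Floer homology relative to a fixed link $\liftA$ that is carried along through all the surgeries. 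You acknowledge this as ``the main obstacle'' but do not resolve it; this is precisely the substantive content that Grigsby--Wehrli supply by passing to the sutured category, where the exact triangles and naturality statements are already available. Without that input (or an independent proof of a filtered link surgery formula), your argument is a plausible outline rather than a proof.
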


Here $\widetilde{\mathrm{HFK}}$ is a minor variation of the knot Floer homology $\HFK$ and $\AKh(L)$ is the \emph{annular Khovanov homology of $L$}.  This is the homology of $\CKh(L)$ in which uses only the part of the differential which preserves the $k$-grading.  Grigsby and Wehrli's proof uses \emph{sutured Floer homology}, a version of Heegaard Floer homology which allows for nice cut-and-paste arguments.\footnote{This why $\AKh$ has also been called sutured Khovanov homology.}  Their argument adapts \Ozsvath and \Szabo's spectral sequence to the sutured world and identifies the target with knot Floer homology.  These results and Remark \ref{rem:grading} inspire the following conjecture.

\begin{Conj}\label{conj:me1}
Let $\diagram$ be a spherical link diagram with two basepoints, $w$ and $z$.  Let $A$ be an unknot which meets the sphere at $w$ and $z$.  Let $Y = \bigoplus_{I \in \{0,1\}} \CF_{\basez}(\branched(I))$ and define the differential as in Sections \ref{sec:ozssz} and \ref{sec:handleslides}.

The order filtration on $Y$ induces a spectral sequence from $\Kh_{z,w}(m(L))$ to $\HF(S^3(-L))$.  There is a grading $\delta_i$ on each page of the spectral sequence so that $\delta_0$ and $\delta_1$ agree with the Khovanov grading $q - - 2k - 2h$ and $\delta_\infty = 2(\gr - 2\alex) - 2h$ on $\HF(S^3(-L))$ with Alexander filtration given by $\liftA$.  With respect to this grading, the differential on the $i$th page has degree $-2$.

This spectral sequence is isomorphic to $\OS(\diagram)$, and the isomorphism (and its grading behavior) depends on $A$.  The spectral sequence is also isomorphic to $\ESz_{z,w}(\diagram)$ by the isomorphism of Proposition \ref{prop:finalgradings}.
\end{Conj}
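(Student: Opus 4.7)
The plan is to exhibit $Y$ as a bifiltered chain complex, identify the spectral sequence induced by the cube filtration with $\OS(\diagram)$ via Theorem~\ref{thm:main}, and then match it with $\ESz_{z,w}(\diagram)$ through a chain-level polygon identification that extends Propositions~\ref{prop:finalgradings} and~\ref{prop:agreement}. First I would define the differential on $Y = \bigoplus_{I \in \{0,1\}^c} \CF_{\basez}(\branched(I))$ by the same polygon-counting recipe as $D'$ in Section~\ref{sec:handleslides}; the branched Heegaard multi-diagrams we use can be equipped with the additional basepoints $\basez$, and since every polygon counted by $D'$ already avoids $\basew$, the recipe transfers verbatim. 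The Alexander filtration from Definition~\ref{def:cfkish} is automatically preserved: for a polygon $\phi$ with $n_{\basew}(\phi) = 0$ the shift $\alex(x) - \alex(y)$ equals $n_{\basez}(\phi) \geq 0$. This makes $Y$ bifiltered by the cube ordering and by $\alex$, and I would take the spectral sequence from the cube filtration while retaining $\alex$ as additional grading data on each page.

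Forgetting the Alexander filtration, $Y$ reduces to the branched complex $X'$ of Section~\ref{sec:handleslides}, so Theorem~\ref{thm:main} gives the desired isomorphism with $\OS(\diagram)$ from $E^1$ onward. The Alexander filtration refines this isomorphism: the handleslide maps $\psi_i, \phi_i$ count polygons avoiding $\basew$ and are therefore $\alex$-filtered, as are the invariance maps used to promote chain-level isomorphisms to spectral-sequence isomorphisms. Proposition~\ref{prop:finalgradings} then upgrades $Y^0 \cong \CSz_{z,w}(\diagram)$ to a bigraded identification with $(2\gr, 2\alex) \leftrightarrow (q,k)$ on each summand; summing over the cube yields $\delta_0 = 2\gr - 4\alex - 2h = q - 2k - 2h$, while $\delta_\infty = 2(\gr - 2\alex) - 2h$ is a literal rewriting on $E^\infty$. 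That $d_i$ has $\delta$-degree $-2$ would follow on the \Szabo side from Proposition~\ref{prop:newGradingRule} combined with an $h$-shift of $i$, and on the Floer side from a Maslov-index computation for $(i+1)$-gons between highest-graded generators in standard diagrams for $\#^n(S^1 \times S^2)$, analogous to Lemma~\ref{cancellationlemma}.

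The substantive remaining claim is the isomorphism with $\ESz_{z,w}(\diagram)$. By Proposition~\ref{prop:agreement}, the identification of Proposition~\ref{prop:finalgradings} intertwines the differentials on $E^2 \cong \Kh_{z,w}(\diagram)$. My approach for higher pages is a direct polygon identification: for each $i$-dimensional configuration $\config_{z,w}$, draw its branched multi-diagram as in Figure~\ref{fig:notriangle}, identify a canonical domain representing a Whitney $(i+1)$-gon between the generators specified by Proposition~\ref{prop:finalgradings}, and use a periodic-domain argument — as in the proof of Proposition~\ref{prop:agreement} — to rule out alternative domains with non-negative local multiplicities. If this succeeds, one obtains $f_{\basez,\basew} \circ I_{C_0} = I_{C_1} \circ \cmap_{z,w;\config}$ on the chain level, making $Y$ and $\CSz_{z,w}(\diagram)$ isomorphic as filtered complexes and yielding the claimed spectral-sequence isomorphism.

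The hard part is precisely this polygon identification. The number of $i$-dimensional configurations grows quickly, the periodic-domain enumeration becomes intricate in high-dimensional multi-diagrams, and one must choose decorations on $\config_{z,w}$ consistently so that the mod-$2$ counts align across configurations. A promising organizational tool would be an inductive scheme mirroring \Szabo's recursive structure: degenerate a holomorphic $(i+1)$-gon along a chord disjoint from $\basez$ — parallel to the case analysis in the proof of Proposition~\ref{prop:equivariant} — to produce a smaller count corresponding to a sub-configuration, then match the resulting terms with the formal recursion used to define $\cmap_{z,w;\config}$ in~\cite{Szabo2013}. Organizing the argument this way should also clarify the dependence on $A$ (equivalently, on the placement of $z$ and $w$) asserted in the conjecture, since the degeneration step can be chosen to respect the two distinguished basepoints.
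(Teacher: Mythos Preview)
The statement you are attempting to prove is labeled \emph{Conjecture}~\ref{conj:me1} in the paper, and the paper does not supply a proof; it offers Proposition~\ref{prop:agreement} as evidence in dimension two and then leaves the general statement open. So there is no proof in the paper to compare your proposal against. What you have written is a reasonable outline of a research program, and you are candid that the polygon identification for $i>2$ is the hard part. But there is a concrete obstruction to the chain-level strategy you describe, and it is already visible in the paper.

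You argue that the differential on $Y$ is $\alex$-filtered because every counted polygon has $n_{\basew}(\phi)=0$ and $n_{\basez}(\phi)\ge 0$, so $\alex(x)\ge\alex(y)$. That is correct on the Heegaard Floer side. You then propose to show that the isomorphism of Proposition~\ref{prop:finalgradings} carries the Floer polygon maps to the maps $\cmap_{z,w;\config}$ configuration by configuration, concluding that $Y$ and $\CSz_{z,w}(\diagram)$ are isomorphic \emph{as filtered complexes}. But under that isomorphism $k$ corresponds to $2\alex$, and Table~\ref{table:table} (configuration $E$, first column) records a component of $\partial_{z,w}$ with $\Delta k = +2$. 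The paper flags this explicitly: ``The last row of the table is tantalizing: it shows the only configuration whose map increases $k$.'' Thus $\partial_{z,w}$ is \emph{not} $k$-nonincreasing, whereas you have just shown the Floer differential is $\alex$-nonincreasing. The two differentials therefore cannot agree on the nose via the grading-preserving identification of Proposition~\ref{prop:finalgradings}, and your inductive polygon-matching scheme must break down at configuration $E$. At best one could hope for a filtered chain \emph{homotopy} equivalence, or an isomorphism of spectral sequences through some map other than the naive one; either would require genuinely new ideas beyond the degeneration argument you sketch. This is precisely why the statement remains a conjecture.
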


This conjecture may seem more complex than Seed and \Szabo's; after all, we are adding in an unknot and new filtration.  But it suggests that the isomorphism between $\OS(\diagram)$ (or $\BR(\diagram)$) and $\ESz(\diagram)$  will depend on additional geometric data: a choice of an unknot or an \emph{annular} embedding of $\diagram$.  It may be difficult to prove the original conjecture without taking that data into account.

The combination $\alex - \gr$ is well-known in the knot Floer world as the $\delta$-grading.  Various combinations of $q$ and $h$ in Khovanov homology are also called $\delta$-gradings.  The two are compared by Baldwin, Levine, and Sarkar in \cite{Baldwin2015} (although their conventions differ from those of \cite{Szabo2013}).  The grading $q - 2k$ plays a role in \cite{Grigsby2016}.

More powerful knot Floer invariants come from an algebraically richer theory, $\text{CFK}^-$ which is defined over a polynomial ring $\bf[U]$.  One can recover $\CF$ from $\text{CFK}^-$ by setting $U$ to one: this transforms a graded theory over a polynomial ring into a filtered theory.  By reversing this recipe, one can construct a doubly-pointed \Szabo homology over a polynomial ring $\bf[W]$.  It is fun to speculate that $\CSz^-$ might be a combinatorial model for some version of $\text{CFK}^-$.  See~\cite{Sarkar2016} for more on these recipes and polynomial actions in Khovanov homology.

\bibliography{specseqsh}

\begin{thebibliography}{10}

\bibitem{Baldwin2011}
John~A. Baldwin.
\newblock On the spectral sequence from {K}hovanov homology to {H}eegaard
  {F}loer homology.
\newblock {\em Int. Math. Res. Not. IMRN}, (15):3426--3470, 2011.

\bibitem{Baldwin2010}
John~A. Baldwin and Olga Plamenevskaya.
\newblock {K}hovanov homology, open books, and tight contact structures.
\newblock {\em Adv. Math.}, 224(6):2544--2582, 2010.

\bibitem{BarNatan2005}
Dror Bar-Natan.
\newblock {K}hovanov's homology for tangles and cobordisms.
\newblock {\em Geometry \& Topology}, 9(3):1443--1499, 2005.

\bibitem{Beier2012}
Simon Beier.
\newblock An integral lift, starting in odd {K}hovanov homology, of
  {S}zab{\'o}'s spectral sequence.
\newblock {\em arXiv preprint arXiv:1205.2256}, 2012.

\bibitem{Greene2013}
Joshua~Evan Greene.
\newblock A spanning tree model for the {H}eegaard {F}loer homology of branched
  double covers.
\newblock {\em Journal of Topology}, (6):525--567, 2013.

\bibitem{Grigsby2015}
J.~Elisenda Grigsby, Anthony~M. Licata, and Stephan~M. Wehrli.
\newblock Annular {K}hovanov homology and knotted {S}chur-{W}eyl
  representations.
\newblock {\em arxiv preprint arxiv:1505.04386}, 2015.

\bibitem{Grigsby2016}
J.~Elisenda Grigsby, Anthony~M. Licata, and Stephan~M. Wehrli.
\newblock Annular {K}hovanov-{L}ee homology, braids, and cobordisms.
\newblock {\em arXiv preprint arXiv:math.{G}{T}/1612.05953}, 2016.

\bibitem{Grigsby2013}
J.~Elisenda Grigsby and Yi~Ni.
\newblock Sutured {K}hovanov homology distinguishes braids from other tangles.
\newblock {\em Math. Res. Letters}, (6):1263--1275, 2014.

\bibitem{Grigsby2010}
J.~Elisenda Grigsby and Stephan~M. Wehrli.
\newblock Khovanov homology, sutured {F}loer homology, and annular links.
\newblock {\em Algebraic and Geometric Topology}, 10:2009--2039, 2010.

\bibitem{Grigsby2011}
J.~Elisenda Grigsby and Stephan~M. Wehrli.
\newblock On gradings in phovanov homology and sutured {F}loer homology.
\newblock {\em Topology and geometry in dimension three}, 560:111--128, 2011.

\bibitem{Hedden2015}
Matthew Hedden and Thomas~E. Mark.
\newblock Floer homology and fractional {D}ehn twists.
\newblock {\em arXiv preprint arXiv:1501.01284}, 2015.

\bibitem{Hempel2004}
Jonathan Hempel.
\newblock {\em 3-manifolds}.
\newblock AMS Chelsea Publishing, 2004.

\bibitem{Hunt2015}
Hilary Hunt, Hannah Keese, Anthony Licata, and Scott Morrison.
\newblock Computing annular {K}hovanov homology.
\newblock {\em arXiv preprint arXiv:1505.04484}, 2015.

\bibitem{Jacobsson2004}
Magnus Jacobsson.
\newblock An invariant of link cobordisms from {K}hovanov's homology theory.
\newblock {\em arXiv preprint arXiv:math/0206303 [math.{G}{T}]}, 2004.

\bibitem{Baldwin2015}
Adam Simon~Levine John A.~Baldwin and Sucharit Sarkar.
\newblock Khovanov homology and knot {F}loer homology for pointed links.
\newblock {\em J. Knot Theory Ramifications}, 26, 2017.

\bibitem{khovanov2000}
Mikhail Khovanov.
\newblock A categorification of the {J}ones polynomial.
\newblock {\em Duke Math J.}, 101(3), 2000.

\bibitem{Khovanov2006}
Mikhail Khovanov.
\newblock An invariant of tangle cobordisms.
\newblock {\em Trans. Amer. Math. Soc.}, pages 315--327, 2006.

\bibitem{Manolescu2016}
Ciprian Manolescu.
\newblock An introduction to knot {F}loer homology.
\newblock {\em arXiv preprint arxiv:1401.7107}, 2016.

\bibitem{Manolescu2017}
Ciprian Manolescu and Peter Ozsv{\'a}th.
\newblock Heegaard {F}loer homology and integer surgeries on links.
\newblock {\em arxiv preprint arxiv:1011.1317v4 [math.{G}{T}]}, 2017.

\bibitem{McCleary2000}
John McCleary.
\newblock {\em A {U}ser's {G}uide to {S}pectral {S}equences}, volume~58 of {\em
  Cambridge Studies in Advanced Mathematics}.
\newblock Cambridge University Press, second edition, 2000.

\bibitem{Ozsvath2004b}
Peter Ozsv{\'a}th and Zolt{\'a}n Szab{\'o}.
\newblock Holomorphic disks and three-manifold invariants: properties and
  applications.
\newblock {\em Ann. of Math. (2)}, 159(3):1159--1245, 2004.

\bibitem{Ozsvath2005b}
Peter Ozsv{\'a}th and Zolt{\'a}n Szab{\'o}.
\newblock On the {H}eegaard {F}loer homology of branched double covers.
\newblock {\em Adv. Math.}, pages 1--33, 2005.

\bibitem{Ozsvath2006}
Peter Ozsv{\'a}th and Zolt{\'a}n Szab{\'o}.
\newblock Holomorphic triangles and invariants for smooth four-manifolds.
\newblock {\em Adv. Math.}, 202(2):236--400, 2006.

\bibitem{Ozsvath2008}
Peter Ozsv{\'a}th and Zoltan Szab{\'o}.
\newblock Holomorphic disks, link invariants, and the multi-variable
  {A}lexander polynomial.
\newblock {\em Algebraic \& Geometric Topology}, (2):615--692, 2008.

\bibitem{Roberts2008}
Lawrence Roberts.
\newblock Notes on the {H}eegaard-{F}loer link surgery spectral sequence.
\newblock {\em arXiv preprint arxiv:0808.2817}, 2008.

\bibitem{Roberts2013}
Lawrence Roberts.
\newblock On knot {F}loer homology in double branched covers.
\newblock {\em Geometry \& Topology}, 17:413--467, 2013.

\bibitem{Sarkar2011}
Sucharit Sarkar.
\newblock Maslov index formulas for {W}hitney $n$-gons.
\newblock {\em J. Symplectic Geom.}, 9(2):251 -- 270, 2011.

\bibitem{Sarkar2016}
Sucharit Sarkar, Cotton Seed, and Zoltan Szab{\'o}.
\newblock A perturbation of the geometric spectral sequence in {K}hovanov
  homology.
\newblock {\em arXiv preprint arxiv:1410.2877 [math.{G}{T}]}, 2016.

\bibitem{Seed2011}
Cotton Seed.
\newblock Computations of szab{\'o}'s geometric spectral sequence in {K}hovanov
  homology.
\newblock {\em arxiv preprint arxiv:1110.0735}, 2011.

\bibitem{Szabo2013}
Zolt{\'a}n Szab{\'o}.
\newblock A geometric spectral sequence from {K}hovanov homology.
\newblock {\em arxiv preprint arxiv:1010.4252}, 2013.

\end{thebibliography}

\end{document}